\newtheorem{theorem}{Theorem}[section]
\newtheorem{corollary}{Corollary}[section]
\newtheorem{remark}{Remark}[section]
\newtheorem{proposition}{Proposition}[section]
\newtheorem{definition}{Definition}[section]
\newcounter{nextauthor}
\def\mathrm{\mbox}
\numberwithin{remark}{section}
\begin{document}
\title{{\bf  Equilibrium reinsurance and investment strategies for insurers with random risk aversion under Heston's SV model }\thanks{This work was supported by the National Natural Science Foundation of China (12171339), the Scientific and Technological Research Program of Chongqing Municipal Education Commission (KJQN202400819), the grant from Chongqing Technology and Business University (2356004) and the Fundamental Research Funds for the Central Universities (2682023CX071).}}
\author{Jian-hao Kang$^a$, Zhun Gou$^b$ and Nan-jing Huang$^c$ \thanks{Corresponding author: nanjinghuang@hotmail.com; njhuang@scu.edu.cn}\\
{\small a. School of Mathematics, Southwest Jiaotong University, Chengdu, Sichuan 610031, P.R. China}\\
{\small b. College of Mathematics and Statistics, Chongqing Technology and Business University,}\\
{\small Chongqing 400067, P.R. China}\\
{\small c. Department of Mathematics, Sichuan University, Chengdu, Sichuan 610064, P.R. China}}
\date{}
\maketitle \vspace*{-9mm}
\begin{abstract}
\noindent
This study employs expected certainty equivalents to explore the reinsurance and investment issue pertaining to an insurer that aims to maximize the expected utility while being subject to random risk aversion. The insurer's surplus process is modeled approximately by a drifted Brownian motion, and the financial market is comprised of a risk-free asset and a risky asset with its price depicted by Heston's stochastic volatility (SV) model. Within a game theory framework, a strict verification theorem is formulated to delineate the equilibrium reinsurance and investment strategies as well as the corresponding value function. Furthermore, through solving the pseudo Hamilton-Jacobi-Bellman (HJB) system, semi-analytical formulations for the equilibrium reinsurance and investment strategies and the associated value function are obtained under the exponential utility. Additionally, several numerical experiments are carried out to demonstrate the characteristics of the equilibrium reinsurance and investment strategies.
 \\ \ \\
\noindent {\bf Keywords}: Reinsurance and investment; random risk aversion; stochastic volatility; time inconsistency; equilibrium strategies.
\\ \ \\
\noindent \textbf{AMS Subject Classification:} 62P05, 91B30, 93E20, 91G10.
\end{abstract}

\section{Introduction}
Since the seminal work by Merton \cite{Merton1969, Merton1971}, the investment problem within the framework of expected utility maximization in a dynamic setting has emerged as one of the highly favored research avenues in the realm of financial economics. Based on the work \cite{Merton1969, Merton1971}, a large strand of literature has been devoted to expanding this research. For example, Kim and Omberg \cite{Kim1996} investigated the dynamic portfolio problem with stochastic risk premium for hyperbolic absolute risk aversion utility functions; Liu \cite{Liu2007} explicitly solved dynamic portfolio choice problems with quadratic asset returns under the power utility; Liu and Loewenstein \cite{Liu2002} studied the portfolio selection with transaction costs under the power utility. On the other hand, reinsurance serves as an efficient approach for insurers to manage and mitigate risks. Richard \cite{Richard1975} firstly incorporated life insurance in the investment problem under Merton's framework. Since then, many scholars have paid much attention to the dynamic reinsurance and investment optimization problems within the expected utility maximization criterion. For instance, Bai and Guo \cite{Bai2010} explored the issue of optimal dynamic excess-of-loss reinsurance as well as the multidimensional portfolio selection, with the focus being on the context of the exponential utility function; Yi et al. \cite{Yi2013} investigated the investment and reinsurance optimization problem with model uncertainty under the exponential utility; Deng et al. \cite{Deng2018} studied the consequences of strategic interplay between two insurers with regard to their reinsurance-investment strategies within the framework of the exponential utility function. For more related studies about reinsurance or/and investment problems, we refer the reader to the literature by, e.g., Li et al. \cite{Li2014}, Liang and Bayraktar \cite{Liang2014}, Yang and Zhang \cite{Yang2005}, Yuen et al. \cite{Yuen2015}, just mention a few.

For the reinsurance and investment problems within the expected utility maximization criterion, the time-inconsistency emerges under many cases such as the problems with non-exponential discounting factor, in which an optimal reinsurance-investment strategy obtained now may no longer be optimal in the future. It is well known that the main methods of handling time-inconsistency are to look for time-consistent equilibrium strategies under a game-theoretic framework instead of optimal strategies. Several treatments are available in accordance with different methodologies. For example, Bj{\"{o}}rk et al. \cite{Bjork2017} and Bj{\"{o}}rk and Murgoci \cite{Bjork2014} followed the classical dynamic programming framework and derived an extended HJB equation to describe the equilibrium; Hu et al. \cite{Hu2012} characterized the game under the framework of open-loop equilibrium control and obtained the equilibrium by the stochastic maximum principle; Yong \cite{Yong2012} introduced the so-called equilibrium HJB equation to formulate the equilibrium strategy  in the context of a multi-person differential game featuring a hierarchical structure. More studies and results concerning the time-inconsistent problems can be found in the literature (see, for instance, Bensoussan et al. \cite{Bensoussan2019}, Bj{\"{o}}rk et al. \cite{Bjork2021}, Ekeland and Pirvu \cite{Ekeland2008}, He and Jiang \cite{He2021}, Hern\'{a}ndez and Possama\"{\i} \cite{H2023}, Li et al. \cite{Li2012} and the references therein).

One common point of the above literature is that the risk aversion coefficient of the investor/insurer is assumed to be constant. However, in real life, it is not feasible for the investor/insurer to simply determine the precise value of his/her risk aversion coefficient. As a result, the estimation of the precise value of risk aversion coefficient has been extensively studied through different methods, for example, by exploiting data regarding labor income in the market \cite{Chetty2006}, by using market data concerning implied and realized volatilities \cite{Bollerslev2011}, by conducting questionnaire survey on representative groups \cite{Burgaard2020}, just mention a few. Recently, Desmettre and Steffensen \cite{Desmettre2023} pointed out that the constant risk aversion should be replaced by the random variable and they used expected certainty equivalents to establish the optimal investment problem in a traditional Black-Scholes financial market setting, which involves random risk aversion and is time-inconsistent. They also derived equilibrium investment strategies in the scenarios of power utility and exponential utility. It should be noted that the price of the risky asset in \cite{Desmettre2023} was supposed to adhere to geometric Brownian motion, indicating that the volatility of said risky asset's price is of a deterministic nature. Nevertheless, lots of empirical evidences indicate that the constant volatility assumption contradicts to the phenomenon of the volatility smile and find that this phenomenon can be well explained by the SV models like the Hull-White model \cite{hull1987} and the Heston model \cite{heston1993closed} as well as the 3/2 SV model \cite{Drimus2012}. Especially, besides the advantage of deriving formulas in a closed-form for problems of pricing, the Heston model is advantageous in explaining many phenomena in empirical research, such as volatility clustering and heavy-tailed return distributions. It has been recognized as a traditional and significant SV model, finding extensive applications in option pricing \cite{Cui2017, He2024} as well as portfolio selection scenarios \cite{Bergen2018, Kraft2013}. Therefore, one natural question is: can we consider the reinsurance and investment optimization problem involving random risk aversion when Heston's SV model is utilized to describe the process of the risky asset's price?

Inspired and motivated by the above researches, the present paper is thus devoted to answering this question by investigating a general reinsurance-investment optimization problem involving random risk aversion within a financial market in which the price process of the risky asset is regulated by Heston's SV model. To be specific, we adopt a drifted Brownian motion to approximate the surplus process and use the approach of expected certainty equivalents to capture the general reinsurance-investment optimization problem with random risk aversion that can be seen as a sum over nonlinear functions of expectations, which is time-inconsistent since Bellman's dynamic programming does not hold. In this paper, we follow the framework of \cite{Bjork2017, Bjork2014, Desmettre2023} to handle the time-inconsistency issue for the general reinsurance-investment optimization problem with random risk aversion. In particular, we derive the pseudo HJB system to capture the equilibrium reinsurance and investment strategies. However, the consideration of Heston's SV makes it difficult and complex to solve the pseudo HJB system because the technique of separating variables is hard to be applied, such as for the power utility. Through complicated calculations and derivations, we solve the pseudo HJB system under the exponential utility and obtain semi-closed form representations regarding the equilibrium reinsurance and investment strategies. On the other hand, in the case where the insurer's utility is either the exponential utility or the power utility, Heston's SV model could give rise to an infinite value function. Therefore, the methods used in \cite{Desmettre2023} to make sure the well-defined value function do not apply here in a straightforward manner. Instead, we follow \cite{ZengX2013} to set certain technical prerequisites for model parameters and then use the martingale technique to ensure that the value function is properly defined.

The key contributions within this paper are recapitulated as follows: (i) apart from the fact that the risky asset's price is captured by Heston's SV model, we pioneer to incorporate random risk aversion to the reinsurance-investment optimization problem in a dynamic environment, which extends the study in \cite{Desmettre2023} to some extent and enriches the research on time-inconsistent reinsurance-investment problems; (ii) a rigorous verification theorem is provided for the general reinsurance and investment optimization problem with random risk aversion; (iii) under the exponential utility, semi-closed form formulations for the equilibrium reinsurance and investment strategies, along with the corresponding value function, are derived, and the admissibility prerequisites and relevant suppositions of the verification theorem are rigorously proven under mild conditions, which shows that the calculations and proofs in this paper are more complex than those in \cite{Desmettre2023}; (iv) numerical experiments are conducted to analyze the effects of model parameters with respect to the equilibrium reinsurance and investment strategies and some interesting findings are derived.

The subsequent part of the paper is arranged as follows. Section 2 furnishes the model setup and assumptions. Section 3 formulates the reinsurance and investment optimization problem involving random risk aversion and presents a verification theorem to capture the equilibrium reinsurance and investment strategy. Section 4 derives the (semi-)closed form formulations of the equilibrium reinsurance and investment strategies for exponential utility under the cases of $n$ possible risk aversions and one risk aversion, respectively. Section 5 provides some numerical illustrations to demonstrate our main outcomes. Section 6 draws the conclusion of this paper and appendices contain the proofs.

\section{Model setting and assumptions}
In this paper, we hypothesize that the insurance and financial markets are devoid of transaction costs and taxes, and that trading can be continuous. Let $(\Omega,\mathcal{F},\mathbb{F},\mathbb{P})$ be a complete filtered probability space equipped with a finite time horizon $[0,T]$, where $\mathbb{F}:=\{\mathcal{F}_{t}\}_{t\in[0,T]}$ is a right-continuous and $\mathbb{P}$-complete filtration. Moreover, we suppose that any decisions taken at moment $t$ are founded on $\mathcal{F}_{t}$, and every stochastic process introduced hereafter is both well-defined and $\mathcal{F}_{t}$-adapted.

\subsection{Surplus process}
Without reinsurance and investment, we take it as an assumption that the surplus process of an insurer is captured through the classical Crem$\acute{e}$r-Lundberg model as follows
\begin{equation}\label{eq1}
  R(t)=\tilde{u}_{0}+c t-\sum\limits_{i=1}^{N(t)}Z_{i},
\end{equation}
where $\tilde{u}_{0}\geq0$ represents the initial surplus, $c$ stands for the premium rate, $\sum\limits_{i=1}^{N(t)}Z_{i}$ denotes a compound Poisson process which characterizes the aggregate claims up to time $t$, $\{N(t)\}_{t\in[0,T]}$ is a homogeneous Poisson process with a positive intensity  $\lambda_{1}$ that represents the count of claims taking place within the time span $[0,t]$ and the claim size $\{Z_{i}\}_{i\in\mathbb{N}}$ is independent of $\{N(t)\}_{t\in[0,T]}$, which is hypothesized to constitute a series of random variables that are positive, independent and identically distributed, and have a finite first moment $\mathbb{E}[Z_{i}]=\mu_{1}>0$ and a second moment $\mathbb{E}[Z^{2}_{i}]=\mu_{2}>0$. Furthermore, the premium rate $c$ is postulated to be ascertained based on the expected value premium principle, that is, $c=(1+\eta_{1})\lambda_{1}\mu_{1}$, where $\eta_{1}$ denotes the insurer's relative safety loading.

We further hypothesize that the insurer can manage her/his insurance risk via the purchase of proportional reinsurance or the acquisition of new business. Specially, we denote by $q(t)$ the insurer's proportional reinsurance/new business degree at time $t$. If $q(t)\in[0,1]$, then it represents the case of purchasing proportional reinsurance. It means that the reinsurer will make compensation to the insurer for $100(1-q(t))\%$ of the claims occurring at time $t$ and the insurer pays $100q(t)\%$ of the claims at the same time. Postulate that the rate of reinsurance premium is also ascertained based on the expected value premium rule. Therefore, the reinsurance premium should be paid at a rate of $(1+\eta_{2})(1-q(t))\lambda_{1}\mu_{1}$, where $\eta_{2}(\geq\eta_{1})$ denotes the safety loading of the reinsurer and the condition $\eta_{2}\geq\eta_{1}$ is necessary to prevent the insurer from engaging in arbitrage. If $q(t)>1$, then it relates to the instance of procuring new business, suggesting that the insurer can play the role of a reinsurer for other insurance providers. For simplicity, $\{q(t)\}_{t\in[0,T]}$ is called a reinsurance strategy in the sequel. Hence, under the condition of such a reinsurance strategy, the insurer's surplus process dynamics can be illustrated by
\begin{align}\label{eq2}
   R(t)&=\tilde{u}_{0}+\int^{t}_{0}\left[c-(1+\eta_{2})(1-q(s))\lambda_{1}\mu_{1}\right]\mathrm{d}s-\sum\limits_{i=1}^{N(t)}q(T_{i})Z_{i}\nonumber\\
   &=\tilde{u}_{0}+\int^{t}_{0}\left[\eta+(1+\eta_{2})q(s)\right]\lambda_{1}\mu_{1}\mathrm{d}s-\sum\limits_{i=1}^{N(t)}q(T_{i})Z_{i}.
\end{align}
Here, $\eta=\eta_{1}-\eta_{2}\leq0$ and $T_{i}$ represents the $i$-th claim's occurring time. In line with the research in \cite{Grandell1990}, the dynamics of $R(t)$ in \eqref{eq2} can be reformulated by the subsequent approximation diffusion model
\begin{align}\label{eq3}
   \mathrm{d}R(t)=a[\eta+\eta_{2}q(t)]\mathrm{d}t+bq(t)\mathrm{d}W_{0}(t),
\end{align}
where $a=\lambda_{1}\mu_{1}$, $b=\sqrt{\lambda_{1}\mu_{2}}$ and $W_{0}:=\{W_{0}(t)\}_{t\in[0,T]}$ denotes a standard $\mathbb{P}$-Brownian motion.

\subsection{Financial market}
We consider a financial market that consists of a risk-free asset (e.g., banking deposit) and a risky asset (e.g., stock). Specifically, the price $S_{0}(t)$ of the risk-free asset follows
\begin{equation}\label{eq4}
  \mathrm{d}S_{0}(t)=rS_{0}(t)\mathrm{d}t, \quad S_{0}(0)=s_{0}>0,
\end{equation}
where the positive constant $r$ is the risk-free interest rate. Moreover, we presume that the price $S_{1}(t)$ of the risky asset adheres to Heston's SV model
\begin{equation}
   \left\{ \begin{aligned}\label{eq5}
   &\mathrm{d}S_{1}(t)=S_{1}(t)\left[(r+\xi V(t))\mathrm{d}t+\sqrt{V(t)}\mathrm{d}W_{1}(t)\right], \quad S_{1}(0)=s_{1}>0,\\
   &\mathrm{d}V(t)=\kappa(\theta-V(t))\mathrm{d}t+\sigma\sqrt{V(t)}\mathrm{d}W_{2}(t), \quad V(0)=v_{0}>0,
  \end{aligned}\right.
\end{equation}
where $\xi$, $\kappa$, $\theta$ and $\sigma$ are positive constants respectively standing for the premium for volatility, the mean-reversion rate, the long-run mean and the volatility of volatility. In addition, $W_{1}:=\{W_{1}(t)\}_{t\in[0,T]}$ and $W_{2}:=\{W_{2}(t)\}_{t\in[0,T]}$ are standard $\mathbb{P}$-Brownian motions with $Cov(W_{1}(t),W_{2}(t))=\rho t,\rho\in[-1,1]$. We hypothesize that $W_{0}$ is independent of $W_{1}$ and $W_{2}$. We also presume that $2\kappa \theta>\sigma^{2}$ to guarantee $V(t)$ is almost surely non-negative.

\subsection{Wealth process}
At the initial moment $t=0$ with a positive capital $x_{0}$, the insurer is permitted to either engage in proportional reinsurance procurement or new business acquisition in the insurance market and to allocate funds for investment in the financial market. Then the trading strategy is represented by a pair of stochastic processes $u(t):=(q(t),\pi(t))$, where $\pi(t)$ signifies the dollar quantities placed into the risky asset at time $t$. We denote the wealth process with the strategy $u(t)$ by $X^{u}(t)$. Therefore, the dollar amounts $X^{u}(t)-\pi(t)$ is invested in the risk-free asset. Furthermore, with $X^{u}(0)=x_{0}$, the variation of the insurer's wealth process can be expressed through the subsequent stochastic differential equation (SDE)
\begin{align}\label{eq6}
   \mathrm{d}X^{u}(t)&=\mathrm{d}R(t)+\frac{X^{u}(t)-\pi(t)}{S_{0}(t)}\mathrm{d}S_{0}(t)+\frac{\pi(t)}{S_{1}(t)}\mathrm{d}S_{1}(t)\nonumber\\
   &=[rX^{u}(t)+a\eta+a\eta_{2}q(t)+\xi V(t)\pi(t)]\mathrm{d}t+bq(t)\mathrm{d}W_{0}(t)+\pi(t)\sqrt{V(t)}\mathrm{d}W_{1}(t).
\end{align}

\section{Problem formulation and verification theorem}
\setcounter{equation}{0}
In this section, we will first characterize the reinsurance and investment optimization problem involving random risk aversion and then provide a verification theorem to capture the equilibrium reinsurance and investment strategy under the framework of game theory.

As we all know, a class of classical reinsurance and investment optimization problems aims to find a reinsurance and investment strategy maximizing the expected utility from the insurer's terminal wealth, that is,
\begin{equation}\label{eq7}
  \widehat{V}(t,x,v):=\sup\limits_{u} \; \mathbb{E}_{t}\left[\varphi(X^{u}(T))\right].
\end{equation}
Here, $X^{u}(t)=x$, $V(t)=v$, $\mathbb{E}_{t}$ represents the $\mathcal{F}_{t}$-conditional expectation in regard to the measure $\mathbb{P}$, $\varphi:(-\infty,\infty)\rightarrow \mathbb{R}$ is a utility function and $X^{u}(T)$ denotes the terminal wealth of the insurer. A reinsurance and investment strategy is called optimal if it can achieve the supremum in \eqref{eq7}. Equivalently, the insurer's objective is to optimize the certainty equivalent of her/his terminal wealth, which amounts to maximizing the reward functional
\begin{equation}\label{eq8}
  \widehat{J}^{u}(t,x,v):=\varphi^{-1}\left(\mathbb{E}_{t}\left[\varphi(X^{u}(T))\right]\right),
\end{equation}
where $\varphi^{-1}(\cdot)$ denotes the inverse function of $\varphi(\cdot)$.

For the reward functional \eqref{eq8}, we follow the study of \cite{Desmettre2023} to assume that the risk aversion coefficient $\gamma$ of the utility function $\varphi$ is a real-valued random variable. Moreover, on account of the idea of maximizing the certainty equivalent of terminal wealth with respect to the random risk aversion, we aim to maximize the subsequent reward functional
\begin{equation}\label{eq9}
  J^{u}(t,x,v):=\int(\varphi^{\gamma})^{-1}\left(\mathbb{E}_{t}\left[\varphi^{\gamma}(X^{u}(T))\right]\right)\mathrm{d}\Gamma(\gamma),
\end{equation}
where $\Gamma$ denotes the cumulative distribution function corresponding to $\gamma$ and the integral is taken over the support of $\Gamma$. Note that we adopt the notation $\varphi^{\gamma}$ to highlight the dependence of $\varphi$ on the risk aversion coefficient $\gamma$. For the ease of notations, in what follows, we adopt the following symbols:
\begin{equation}\label{eq10}
  y^{u,\gamma}(t,x,v):=\mathbb{E}_{t}\left[\varphi^{\gamma}(X^{u}(T))\right]
\end{equation}
and
\begin{equation}\label{eq11}
  \iota^{\gamma}(y):=\frac{\mathrm{d}}{\mathrm{d} y}(\varphi^{\gamma})^{-1}(y).
\end{equation}
Therefore, the reward functional \eqref{eq9} can be rewritten as
\begin{equation}\label{eq12}
  J^{u}(t,x,v)=\int(\varphi^{\gamma})^{-1}\left(y^{u,\gamma}(t,x,v)\right)\mathrm{d}\Gamma(\gamma)
\end{equation}
and then the insurer's goal is to maximize the reward functional \eqref{eq12}.

It follows from \eqref{eq10} and \eqref{eq12} that the reward functional \eqref{eq12} can be seen as a sum over nonlinear functions of expectations, which results in the issue of time-inconsistency. In order to deal with time-inconsistency and to find a time-consistent reinsurance and investment strategy, we follow the studies of \cite{Bjork2017, Bjork2021, Desmettre2023, Li2012} to seek an equilibrium reinsurance and investment strategy by game theory. Now, we first give the definition of admissible reinsurance and investment strategy as follows.
\begin{definition}\label{definition3.1}(Admissible reinsurance and investment strategy)
Let $\mathcal{O}:=\mathbb{R}\times\mathbb{R}^{+}$ and $\mathcal{Q}:=[0,T]\times\mathcal{O}$. For any fixed $t\in[0,T]$, a reinsurance and investment strategy $u(t)=(q(t),\pi(t))$ is said to be admissible if
\begin{itemize}
\item[(i)] for each initial point $(t,x,v)\in\mathcal{Q}$, the SDE \eqref{eq6} admits a unique strong solution $X^{u}$;
\item[(ii)] $\{u(t)\}_{t\in[0,T]}$ is $\mathcal{F}_{t}$-progressively measurable and continuous, $q(t)\geq0$ and $\mathbb{E}\left[\int^{T}_{0}(q^{2}(t)+\pi^{2}(t))\mathrm{d}t\right]<\infty$;
\item[(iii)] $\int\left|(\varphi^{\gamma})^{-1}\left(\mathbb{E}_{t}\left[\varphi^{\gamma}(X^{u}(T))\right]\right)\right|\mathrm{d}\Gamma(\gamma)<\infty$.
\end{itemize}
Let $\Pi$ be the set of all admissible reinsurance and investment strategies.
\end{definition}

Next, in the context of game theory, we characterize the following definition of equilibrium reinsurance and investment strategy.
\begin{definition}\label{definition3.2}(Equilibrium reinsurance and investment strategy)
For any arbitrarily chosen initial state $(t,x,v)\in\mathcal{Q}$, consider an admissible reinsurance and investment strategy $\hat{u}(t,x,v)=(\hat{q}(t,x,v),\hat{\pi}(t,x,v))$. Choose three real numbers $q>0$, $\pi$ and $h>0$ and define the following reinsurance and investment strategy
\begin{align}\label{eq13}
\left(q_{h}(s,\widetilde{x},\widetilde{v}),\pi_{h}(s,\widetilde{x},\widetilde{v})\right)=
\begin{cases}
(q,\pi), \quad\quad\quad\quad\quad\;\quad\quad\;\text{for}\; \quad\, t\leq s < t+h,(\widetilde{x},\widetilde{v})\in \mathbb{R} \times \mathbb{R}^{+},\\
(\hat{q}(s,\widetilde{x},\widetilde{v}),\hat{\pi}(s,\widetilde{x},\widetilde{v})), \quad\;\text{for}\;\quad  t+h\leq s < T,(\widetilde{x},\widetilde{v})\in \mathbb{R} \times \mathbb{R}^{+}.
\end{cases}
\end{align}
Let $u_{h}(s,\widetilde{x},\widetilde{v}):=(q_{h}(s,\widetilde{x},\widetilde{v}),\pi_{h}(s,\widetilde{x},\widetilde{v}))$. If $$\underset{h\rightarrow0}\liminf\frac{J^{\hat{u}}(t,x,v)-J^{u_{h}}(t,x,v)}{h}
\geq0
$$
for all $(q,\pi)\in\Pi$, then $\hat{u}(t,x,v)$ is called an equilibrium reinsurance and investment strategy. Moreover, the equilibrium value function can be defined by $V(t,x,v):=J^{\hat{u}}(t,x,v)$.
\end{definition}

\begin{remark}
We would like to mention that we consider the closed-loop equilibrium reinsurance and investment strategy in this paper. The idea in Definition \ref{definition3.2} is similar to the ones in \cite{Bjork2017, Bjork2021, Desmettre2023, Li2012}. The reinsurance and investment problem within $[0,T]$ is considered to be a non-cooperative game participated in by countless insurers indexed by $t\in[0,T]$, who can merely manipulate the wealth process $X^{u}$ at time $t$ through the selection of $u(t,X^{u}(t),V(t))$. The time-consistent reinsurance and investment strategy $\hat{u}$ is a subgame perfect Nash equilibrium. That is to say, for any $t\in[0,T)$, supposing insurers $s\in(t,T]$ individually decide on the reinsurance and investment strategy $\{\hat{u}(s,X^{u}(s),V(s))\}_{s\in(t,T]}$, it will be optimal for insurer $t$ to opt for $\hat{u}(t,X^{u}(t),V(t))$. Since each insurer $t$ conceptually solves the reinsurance and investment problem at the specific moment of time $t$, that is, on a time set having a Lebesgue measure of $0$, the reinsurance and investment strategy exerts no impact. Thus, for every time instant $t$ and a small positive $h$ representing the minimal time passage, insurer $t$ considers the reinsurance and investment problem within $[t,t+h)$ on the condition that insurers $s\in[t+h,T]$ have opted for the optimal reinsurance and investment strategy.
\end{remark}

Based on Definition \ref{definition3.2}, the insurer's aim is to maximize the reward functional \eqref{eq12} by finding the equilibrium reinsurance and investment strategy. For convenience, we introduce some notations. Denote
\begin{align*}
C^{1,2,2}(\mathcal{Q})=&\left\{\phi(t,x,v)|\phi(t,\cdot,\cdot)\; \text{is once continuously differentiable on} \;[0,T]\; \text{and}\right. \\
 &\;\left.\phi(\cdot,x,v)\;\text{is twice continuously differentiable on}\; \mathcal{O} \right\}
\end{align*}
and for any $\phi(t,x,v)\in C^{1,2,2}(\mathcal{Q})$ and $u=(q,\pi)$, let
\begin{align}\label{eq14}
  \mathcal{A}^{u}\phi(t,x,v)=&\phi_{t}(t,x,v)+(rx+a\eta+a\eta_{2}q+\xi v\pi)\phi_{x}(t,x,v)+0.5(b^{2}q^{2}+\pi^{2}v)\phi_{xx}(t,x,v)\nonumber\\
&+\kappa(\theta-v)\phi_{v}(t,x,v)+0.5\sigma^{2}v\phi_{vv}(t,x,v)+\rho\pi\sigma v\phi_{xv}(t,x,v).
\end{align}
In addition, for $i\in \mathbb{N}$, we define the set $\mathbb{L}^{i}_{\mathcal{F}}(0,T;\mathbb{R})$ as the set of $\mathcal{F}_{t}$-adapted stochastic processes $Z(\cdot)$ with $\mathbb{E}\left[\int^{T}_{0}\parallel Z(t)\parallel_{i}\mathrm{d}t\right]<\infty$, where $\parallel \cdot\parallel_{i}$ denotes the $i$-norm. Then, for any $\phi(t,x,v)\in C^{1,2,2}(\mathcal{Q})$ and $u=(q,\pi)$, we assume that
\begin{itemize}
  \item[(A)] $\phi_{t},\;(rx+a\eta+a\eta_{2}q+\xi v\pi)\phi_{x},\;\kappa(\theta-v)\phi_{v},\;(b^{2}q^{2}+\pi^{2}v)\phi_{xx},\;\sigma^{2}v\phi_{vv},\;\rho\pi\sigma v\phi_{xv}\in\mathbb{L}^{1}_{\mathcal{F}}(0,T;\mathbb{R})$ and $bq\phi_{x},\;\pi\sqrt{v}\phi_{x},\;\sigma\sqrt{v}\phi_{v}\in\mathbb{L}^{2}_{\mathcal{F}}(0,T;\mathbb{R})$.
\end{itemize}

Subsequently, we are going to offer a verification theorem to capture the equilibrium reinsurance and investment strategy.
\begin{theorem}\label{theorem3.1}(Verification theorem)
Assume that there exist functions $U, Y^{\gamma}, H\in C^{1,2,2}(\mathcal{Q})$ such that
\begin{align}\label{eq15}
\sup\limits_{u} \;\left\{\mathcal{A}^{u}U(t,x,v)-\mathcal{A}^{u}H(t,x,v)
+\int\iota^{\gamma}(Y^{\gamma}(t,x,v))\mathcal{A}^{u}Y^{\gamma}(t,x,v)\mathrm{d}\Gamma(\gamma)\right\}=0
\end{align}
and
\begin{equation}\label{eq16}
  \mathcal{A}^{\hat{u}}Y^{\gamma}(t,x,v)=0
\end{equation}
with boundary conditions $U(T,x,v)=x$ and $Y^{\gamma}(T,x,v)=\varphi^{\gamma}(x)$ for all $\gamma$, where
\begin{equation}\label{eq17}
H(t,x,v)=\int(\varphi^{\gamma})^{-1}\left(Y^{\gamma}(t,x,v)\right)\mathrm{d}\Gamma(\gamma)
\end{equation}
and
\begin{align}\label{eq18}
\hat{u}=\arg\sup\limits_{u} \;\left\{\mathcal{A}^{u}U(t,x,v)-\mathcal{A}^{u}H(t,x,v)
+\int\iota^{\gamma}(Y^{\gamma}(t,x,v))\mathcal{A}^{u}Y^{\gamma}(t,x,v)\mathrm{d}\Gamma(\gamma)\right\}.
\end{align}
Moreover, assume that $U$, $Y^{\gamma}$ and $H$ satisfy the condition (A) for all $\gamma$. Then $\hat{u}=(\hat{q},\hat{\pi})$ is an equilibrium
reinsurance and investment strategy and $V(t,x,v)=U(t,x,v)$ as well as $y^{\hat{u},\gamma}(t,x,v)=Y^{\gamma}(t,x,v)$.
\end{theorem}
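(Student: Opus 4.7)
The plan is to carry out an extended-HJB verification argument adapted from \cite{Bjork2014, Desmettre2023} to the certainty-equivalent reward \eqref{eq12} and the Heston dynamics, organised into three stages.

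\textbf{Stage 1 (identify $Y^\gamma$ with $y^{\hat{u},\gamma}$).} First I would apply It\^o's formula to $Y^\gamma(s,X^{\hat{u}}(s),V(s))$ on $[t,T]$. By \eqref{eq16} the drift is zero, and assumption (A) together with the admissibility of $\hat{u}$ promotes the stochastic integral to a true $(\mathcal{F}_s)$-martingale. Taking $\mathbb{E}_t$ and using the terminal condition $Y^\gamma(T,x,v)=\varphi^\gamma(x)$ gives $Y^\gamma(t,x,v)=\mathbb{E}_t[\varphi^\gamma(X^{\hat{u}}(T))]=y^{\hat{u},\gamma}(t,x,v)$, and then \eqref{eq17} yields $H(t,x,v)=J^{\hat{u}}(t,x,v)$.

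\textbf{Stage 2 (show $U\equiv H$).} Substituting the maximizer $\hat{u}$ into \eqref{eq15} turns it into an equality; \eqref{eq16} then makes the $\int \iota^\gamma\mathcal{A}^{\hat{u}}Y^\gamma\,d\Gamma$ term vanish, leaving $\mathcal{A}^{\hat{u}}(U-H)=0$. Because $\Gamma$ is a probability measure, the boundary conditions collapse to $(U-H)(T,x,v)=x-\int(\varphi^\gamma)^{-1}(\varphi^\gamma(x))\,d\Gamma(\gamma)=0$. Applying It\^o to $U-H$ along $X^{\hat{u}}$, condition (A) again upgrades the martingale term to a true martingale, and a Feynman--Kac argument gives $U\equiv H$ on $\mathcal{Q}$. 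Combined with Stage~1, this will yield $V=J^{\hat{u}}=H=U$ once the equilibrium property is established.

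\textbf{Stage 3 (equilibrium inequality).} For the spike perturbation $u_h$ in \eqref{eq13}, conditioning on $\mathcal{F}_{t+h}$ and using that $u_h\equiv\hat{u}$ on $[t+h,T]$ together with Stage~1 gives $y^{u_h,\gamma}(t,x,v)=\mathbb{E}_t[Y^\gamma(t+h,X^{u_h}(t+h),V(t+h))]$. Applying It\^o to $Y^\gamma$ along the $u_h$-path on $[t,t+h]$ and using condition (A) to kill the martingale part in expectation then yields
\[
y^{u_h,\gamma}(t,x,v)-Y^\gamma(t,x,v)=\mathbb{E}_t\!\left[\int_t^{t+h}\mathcal{A}^{(q,\pi)}Y^\gamma(s,X^{u_h}(s),V(s))\,ds\right],
\]
which is $O(h)$. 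A second-order Taylor expansion of $(\varphi^\gamma)^{-1}$ about $Y^\gamma(t,x,v)$, followed by division by $h$ and passage to the limit via continuity of $\mathcal{A}^{(q,\pi)}Y^\gamma$, produces
\[
\liminf_{h\downarrow 0}\frac{J^{\hat{u}}(t,x,v)-J^{u_h}(t,x,v)}{h}=-\int\iota^\gamma(Y^\gamma(t,x,v))\,\mathcal{A}^{(q,\pi)}Y^\gamma(t,x,v)\,d\Gamma(\gamma).
\]
This right-hand side is nonnegative by \eqref{eq15}: once $U\equiv H$, the identity $\mathcal{A}^uU-\mathcal{A}^uH=0$ holds for every $u$, so \eqref{eq15} reduces to $\sup_u\int\iota^\gamma\mathcal{A}^uY^\gamma\,d\Gamma=0$ (attained at $\hat{u}$ by \eqref{eq16}), whence $\int\iota^\gamma\mathcal{A}^{(q,\pi)}Y^\gamma\,d\Gamma\le 0$ for every admissible $(q,\pi)$.

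The main obstacle is the analytic bookkeeping rather than the algebra: justifying the exchange of $\liminf$ with the integrals $\int\cdots d\Gamma(\gamma)$ and with $\mathbb{E}_t$, bounding the Taylor remainder of $(\varphi^\gamma)^{-1}$ uniformly in $\gamma$ over the support of $\Gamma$, and verifying that the local-martingale parts arising in the three It\^o applications are genuine martingales. All three issues reduce to assumption (A) together with item~(iii) of Definition~\ref{definition3.1} and the admissibility of both $\hat{u}$ and the constant spike $(q,\pi)$.
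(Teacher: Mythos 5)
Your proposal is correct and follows essentially the same three-step route as the paper's Appendix A: a Feynman--Kac/It\^o identification of $Y^{\gamma}$ with $y^{\hat{u},\gamma}$, the argument that $U\equiv H=J^{\hat{u}}$ from $\mathcal{A}^{\hat{u}}(U-H)=0$ and matching terminal data, and a spike-perturbation expansion combining It\^o with a Taylor expansion of $(\varphi^{\gamma})^{-1}$ (producing the $\iota^{\gamma}$ weight) and the pseudo-HJB inequality. Your Stage 3 is a mild streamlining --- you use $U\equiv H$ to cancel $\mathcal{A}^{u}U-\mathcal{A}^{u}H$ and reduce \eqref{eq15} to $\sup_{u}\int\iota^{\gamma}\mathcal{A}^{u}Y^{\gamma}\,\mathrm{d}\Gamma(\gamma)=0$, whereas the paper carries the $U$ and $H$ expansions explicitly --- but the substance is identical, and the analytic caveats you flag (true-martingale property, interchange of limits with $\int\cdot\,\mathrm{d}\Gamma$) are exactly the ones condition (A) is invoked for in the paper.
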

\begin{proof}
See Appendix A.
\end{proof}

\begin{corollary}\label{cor1}
It follows from \eqref{eq17} that
\begin{eqnarray}\label{eq30}
\left\{
\begin{aligned}
&H_{t}(t,x,v)=\int\iota^{\gamma}(Y^{\gamma}(t,x,v))Y_{t}^{\gamma}(t,x,v)\mathrm{d}\Gamma(\gamma),\\
&H_{x}(t,x,v)=\int\iota^{\gamma}(Y^{\gamma}(t,x,v))Y_{x}^{\gamma}(t,x,v)\mathrm{d}\Gamma(\gamma),\\
&H_{v}(t,x,v)=\int\iota^{\gamma}(Y^{\gamma}(t,x,v))Y_{v}^{\gamma}(t,x,v)\mathrm{d}\Gamma(\gamma),\\
&H_{xx}(t,x,v)=\int\iota^{\gamma}(Y^{\gamma}(t,x,v))Y_{xx}^{\gamma}(t,x,v)\mathrm{d}\Gamma(\gamma)+
\int(\iota^{\gamma})^{'}(Y^{\gamma}(t,x,v))(Y_{x}^{\gamma}(t,x,v))^{2}\mathrm{d}\Gamma(\gamma),\\
&H_{vv}(t,x,v)=\int\iota^{\gamma}(Y^{\gamma}(t,x,v))Y_{vv}^{\gamma}(t,x,v)\mathrm{d}\Gamma(\gamma)+
\int(\iota^{\gamma})^{'}(Y^{\gamma}(t,x,v))(Y_{v}^{\gamma}(t,x,v))^{2}\mathrm{d}\Gamma(\gamma),\\
&H_{xv}(t,x,v)=\int\iota^{\gamma}(Y^{\gamma}(t,x,v))Y_{xv}^{\gamma}(t,x,v)\mathrm{d}\Gamma(\gamma)+
\int(\iota^{\gamma})^{'}(Y^{\gamma}(t,x,v))Y_{x}^{\gamma}(t,x,v)Y_{v}^{\gamma}(t,x,v)\mathrm{d}\Gamma(\gamma),
\end{aligned}
\right.
\end{eqnarray}
where $(\iota^{\gamma})^{'}$ denotes the derivative of $\iota^{\gamma}$, then the pseudo HJB \eqref{eq15} can be rewritten as
\begin{align}\label{eq31}
  \sup\limits_{u} \;&\left\{\mathcal{A}^{u}U(t,x,v)-0.5(b^{2}q^{2}+\pi^{2}v)\int(\iota^{\gamma})^{'}(Y^{\gamma}(t,x,v))(Y_{x}^{\gamma}(t,x,v))^{2}\mathrm{d}\Gamma(\gamma)-\rho\pi\sigma v\right.\nonumber\\
&\left.\times\int(\iota^{\gamma})^{'}(Y^{\gamma}(t,x,v))Y_{x}^{\gamma}(t,x,v)Y_{v}^{\gamma}(t,x,v)\mathrm{d}\Gamma(\gamma)
-0.5\sigma^{2}v\int(\iota^{\gamma})^{'}(Y^{\gamma}(t,x,v))(Y_{v}^{\gamma}(t,x,v))^{2}\mathrm{d}\Gamma(\gamma)\right\}=0.
\end{align}
\end{corollary}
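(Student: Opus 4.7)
The proof is a routine but careful combination of the chain rule, the product rule, and differentiation under the integral sign. The plan is to first derive the six identities in (3.30) for the partial derivatives of $H$, then substitute them into the definition (3.14) of $\mathcal{A}^u H$, and finally match the resulting terms against $\int \iota^\gamma(Y^\gamma)\mathcal{A}^u Y^\gamma \,d\Gamma(\gamma)$ to identify the leftover residual that constitutes (3.31).

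For the first-order identities, applying $\partial_t$ to the integrand of (3.17) gives $\iota^\gamma(Y^\gamma) Y^\gamma_t$ by the chain rule together with the definition $\iota^\gamma(y) = \frac{d}{dy}(\varphi^\gamma)^{-1}(y)$; analogous applications of $\partial_x$ and $\partial_v$ yield $\iota^\gamma(Y^\gamma) Y^\gamma_x$ and $\iota^\gamma(Y^\gamma) Y^\gamma_v$. Interchanging differentiation with the $\Gamma$-integral then produces the first three equations of (3.30). For the second-order derivatives, I would differentiate the already-derived first-order expressions once more; the product rule inside the integrand gives $\partial_x[\iota^\gamma(Y^\gamma) Y^\gamma_x] = (\iota^\gamma)'(Y^\gamma)(Y^\gamma_x)^2 + \iota^\gamma(Y^\gamma) Y^\gamma_{xx}$, and the analogous computations of $\partial_v$ applied to $H_v$ and of $\partial_v$ applied to $H_x$ (or, equivalently, $\partial_x$ applied to $H_v$) deliver the remaining three identities.

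For the rewriting of (3.15) as (3.31), I would substitute these six identities into the expression $\mathcal{A}^u H$ defined in (3.14). By linearity of $\mathcal{A}^u$ in its argument and by regrouping, the contributions $\iota^\gamma(Y^\gamma) Y^\gamma_t$, $\iota^\gamma(Y^\gamma) Y^\gamma_x$, $\iota^\gamma(Y^\gamma) Y^\gamma_v$, $\iota^\gamma(Y^\gamma) Y^\gamma_{xx}$, $\iota^\gamma(Y^\gamma) Y^\gamma_{vv}$, and $\iota^\gamma(Y^\gamma) Y^\gamma_{xv}$, weighted by the appropriate coefficients from (3.14), assemble exactly into $\int \iota^\gamma(Y^\gamma) \mathcal{A}^u Y^\gamma \,d\Gamma(\gamma)$. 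Hence these two terms cancel inside the supremum in (3.15), and what remains alongside $\mathcal{A}^u U$ are precisely the three residuals arising from the $(\iota^\gamma)'$ pieces of $H_{xx}$, $H_{vv}$, and $H_{xv}$, each carrying a minus sign and multiplied respectively by $0.5(b^2 q^2 + \pi^2 v)$, $0.5 \sigma^2 v$, and $\rho\pi\sigma v$. This reproduces (3.31) verbatim.

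The main obstacle is not analytical depth but hygienic: passing $\partial_t, \partial_x, \partial_v$ under the $\Gamma$-integral demands more than the pointwise regularity $Y^\gamma \in C^{1,2,2}(\mathcal{Q})$ — it requires a dominating integrability in $\gamma$ for each relevant partial, consistent with the admissibility condition in Definition 3.1(iii). I would treat this integrability as a standing hypothesis implicit in the setup of Theorem 3.1, noting that in the concrete exponential-utility framework of Section 4 the required domination can be checked directly from the explicit form of $Y^\gamma$.
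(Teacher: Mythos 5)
Your proposal is correct and follows exactly the route the paper takes (the paper leaves the computation implicit in the statement of the corollary): chain rule and differentiation under the $\Gamma$-integral give (3.30), and substituting into $\mathcal{A}^{u}H$ shows the $\iota^{\gamma}$-weighted terms reassemble into $\int\iota^{\gamma}(Y^{\gamma})\mathcal{A}^{u}Y^{\gamma}\,\mathrm{d}\Gamma(\gamma)$ and cancel, leaving precisely the three $(\iota^{\gamma})'$ residuals in (3.31). Your closing remark about needing a dominating integrability in $\gamma$ to justify the interchange is a fair observation that the paper glosses over; it is indeed verifiable in the exponential-utility case of Section 4.
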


\section{Equilibrium strategies to the optimization problem}
\setcounter{equation}{0}
In this section, our first step is to strive to solve the reinsurance and investment optimization problem with random risk aversion for a general utility function to the greatest extent possible. Then we will solve the problem for the well-known exponential utility function.

Since we have obtained the pseudo HJB \eqref{eq31} to capture the equilibrium reinsurance and investment strategy, it is possible for us to differentiate the expression inside the bracket of \eqref{eq31} in relation to $q$ and $\pi$, respectively, so as to obtain
\begin{equation}\label{eq32}
  \hat{q}(t)=-\frac{a \eta_{2}U_{x}(t,x,v)}{b^{2}\left[U_{xx}(t,x,v)-\int(\iota^{\gamma})^{'}(Y^{\gamma}(t,x,v))(Y_{x}^{\gamma}(t,x,v))^{2}\mathrm{d}\Gamma(\gamma)\right]}
\end{equation}
and
\begin{equation}\label{eq33}
\hat{\pi}(t)=-\frac{\xi U_{x}(t,x,v)+\rho\sigma\left[U_{xv}(t,x,v)-\int(\iota^{\gamma})^{'}(Y^{\gamma}(t,x,v))Y_{x}^{\gamma}(t,x,v)Y_{v}^{\gamma}(t,x,v)\mathrm{d}\Gamma(\gamma)\right]}
{U_{xx}(t,x,v)-\int(\iota^{\gamma})^{'}(Y^{\gamma}(t,x,v))(Y_{x}^{\gamma}(t,x,v))^{2}\mathrm{d}\Gamma(\gamma)},
\end{equation}
where we use $(\hat{q}(t),\hat{\pi}(t)):=(\hat{q}(t,x,v),\hat{\pi}(t,x,v))$. By the equation \eqref{eq23}, we know that
$$
U(t,x,v)=H(t,x,v)=\int(\varphi^{\gamma})^{-1}\left(Y^{\gamma}(t,x,v)\right)\mathrm{d}\Gamma(\gamma).
$$
By using the system of equations \eqref{eq30}, the candidate equilibrium reinsurance and investment strategy can be rewritten as
\begin{equation}\label{eq34}
  \hat{q}(t)=-\frac{a \eta_{2}\int\iota^{\gamma}(Y^{\gamma}(t,x,v))Y_{x}^{\gamma}(t,x,v)\mathrm{d}\Gamma(\gamma)}{b^{2}\int\iota^{\gamma}(Y^{\gamma}(t,x,v))Y_{xx}^{\gamma}(t,x,v)\mathrm{d}\Gamma(\gamma)}
\end{equation}
and
\begin{equation}\label{eq35}
\hat{\pi}(t)=-\frac{\xi \int\iota^{\gamma}(Y^{\gamma}(t,x,v))Y_{x}^{\gamma}(t,x,v)\mathrm{d}\Gamma(\gamma)+\rho\sigma\int\iota^{\gamma}(Y^{\gamma}(t,x,v))Y_{xv}^{\gamma}(t,x,v)\mathrm{d}\Gamma(\gamma)}
{\int\iota^{\gamma}(Y^{\gamma}(t,x,v))Y_{xx}^{\gamma}(t,x,v)\mathrm{d}\Gamma(\gamma)}.
\end{equation}

If the utility function $\varphi^{\gamma}$ takes the form of exponential utility
$$
\varphi^{\gamma}(\cdot)=-\frac{1}{\gamma}e^{-\gamma (\cdot)},
$$
where $\gamma>0$ a.s., then its inverse and the corresponding derivative can be given as follows
$$
(\varphi^{\gamma})^{-1}(\cdot)=-\frac{1}{\gamma}\ln(-\gamma (\cdot)),\quad
\frac{\mathrm{d}}{\mathrm{d} \nu}(\varphi^{\gamma})^{-1}(\nu)=-\frac{1}{\gamma \nu}.
$$
Thus, the candidate equilibrium reinsurance and investment strategy becomes
\begin{equation*}
   \left\{ \begin{aligned}
   &\hat{q}(t)=-\frac{a \eta_{2} \int \frac{Y_{x}^{\gamma}(t,x,v)}{\gamma Y^{\gamma}(t,x,v)}\mathrm{d}\Gamma(\gamma)}
{b^{2}\int \frac{Y_{xx}^{\gamma}(t,x,v)}{\gamma Y^{\gamma}(t,x,v)}\mathrm{d}\Gamma(\gamma)},\\
   &\hat{\pi}(t)=-\frac{\xi\int \frac{Y_{x}^{\gamma}(t,x,v)}{\gamma Y^{\gamma}(t,x,v)}\mathrm{d}\Gamma(\gamma)
+\rho\sigma\int \frac{Y_{xv}^{\gamma}(t,x,v)}{\gamma Y^{\gamma}(t,x,v)}\mathrm{d}\Gamma(\gamma)}
{\int \frac{Y_{xx}^{\gamma}(t,x,v)}{\gamma Y^{\gamma}(t,x,v)}\mathrm{d}\Gamma(\gamma)}.
  \end{aligned}\right.
\end{equation*}
We make an ansatz that $Y^{\gamma}(t,x,v)=-\frac{1}{\gamma}e^{g_{1}^{\gamma}(t)x+g_{2}^{\gamma}(t)v+g_{3}^{\gamma}(t)}$ with deterministic functions $g_{1}^{\gamma}(t)$, $g_{2}^{\gamma}(t)$ and $g_{3}^{\gamma}(t)$ of $t\in[0,T]$. We would like to point out that we adopt the top-script $\gamma$ on functions $g_{1}^{\gamma}(t)$, $g_{2}^{\gamma}(t)$ and $g_{3}^{\gamma}(t)$ to denote the dependence on $\gamma$ rather than taking the $\gamma$-th power of functions $g_{1}(t)$, $g_{2}(t)$ and $g_{3}(t)$.
Then, the partial derivatives of the ansatz read
\begin{eqnarray*}
\left\{
\begin{aligned}
   &Y^{\gamma}_{t}(t,x,v)=-\frac{1}{\gamma}e^{g_{1}^{\gamma}(t)x+g_{2}^{\gamma}(t)v+g_{3}^{\gamma}(t)}\left[\frac{\partial g_{1}^{\gamma}(t)}{\partial t}x+\frac{\partial g_{2}^{\gamma}(t)}{\partial t}v+\frac{\partial g_{3}^{\gamma}(t)}{\partial t}\right],\nonumber\\
   &Y^{\gamma}_{x}(t,x,v)=-\frac{1}{\gamma}g_{1}^{\gamma}(t)e^{g_{1}^{\gamma}(t)x+g_{2}^{\gamma}(t)v+g_{3}^{\gamma}(t)},
\quad Y^{\gamma}_{xx}(t,x,v)=-\frac{1}{\gamma}(g_{1}^{\gamma}(t))^{2}e^{g_{1}^{\gamma}(t)x+g_{2}^{\gamma}(t)v+g_{3}^{\gamma}(t)},\nonumber\\
   &Y^{\gamma}_{v}(t,x,v)=-\frac{1}{\gamma}g_{2}^{\gamma}(t)e^{g_{1}^{\gamma}(t)x+g_{2}^{\gamma}(t)v+g_{3}^{\gamma}(t)},
\quad Y^{\gamma}_{vv}(t,x,v)=-\frac{1}{\gamma}(g_{2}^{\gamma}(t))^{2}e^{g_{1}^{\gamma}(t)x+g_{2}^{\gamma}(t)v+g_{3}^{\gamma}(t)},\nonumber\\
&Y^{\gamma}_{xv}(t,x,v)=-\frac{1}{\gamma}g_{1}^{\gamma}(t)g_{2}^{\gamma}(t)e^{g_{1}^{\gamma}(t)x+g_{2}^{\gamma}(t)v+g_{3}^{\gamma}(t)}
\end{aligned}
\right.
\end{eqnarray*}
and the candidate equilibrium reinsurance and investment strategy follows
\begin{equation}\label{eq36}
  \hat{q}(t)=-\frac{a \eta_{2} \int \frac{g_{1}^{\gamma}(t)}{\gamma }\mathrm{d}\Gamma(\gamma)}
{b^{2}\int \frac{(g_{1}^{\gamma}(t))^{2}}{\gamma }\mathrm{d}\Gamma(\gamma)}
\end{equation}
and
\begin{equation}\label{eq37}
 \hat{\pi}(t)=-\frac{\xi\int \frac{g_{1}^{\gamma}(t)}{\gamma }\mathrm{d}\Gamma(\gamma)
+\rho\sigma\int \frac{g_{1}^{\gamma}(t)g_{2}^{\gamma}(t)}{\gamma }\mathrm{d}\Gamma(\gamma)}
{\int \frac{(g_{1}^{\gamma}(t))^{2}}{\gamma }\mathrm{d}\Gamma(\gamma)}.
\end{equation}
Furthermore, substituting the partial derivatives of $Y^{\gamma}(t,x,v)$ into \eqref{eq16} yields
\begin{align}\label{eq38}
  0=&e^{g_{1}^{\gamma}(t)x+g_{2}^{\gamma}(t)v+g_{3}^{\gamma}(t)}\left[\frac{\partial g_{1}^{\gamma}(t)}{\partial t}x+\frac{\partial g_{2}^{\gamma}(t)}{\partial t}v+\frac{\partial g_{3}^{\gamma}(t)}{\partial t}\right]+(rx+a\eta+a\eta_{2}\hat{q}+\xi v\hat{\pi})g_{1}^{\gamma}(t)e^{g_{1}^{\gamma}(t)x+g_{2}^{\gamma}(t)v+g_{3}^{\gamma}(t)}\nonumber\\
&+0.5(b^{2}\hat{q}^{2}+\hat{\pi}^{2}v)(g_{1}^{\gamma}(t))^{2}e^{g_{1}^{\gamma}(t)x+g_{2}^{\gamma}(t)v+g_{3}^{\gamma}(t)}
+\kappa(\theta-v)g_{2}^{\gamma}(t)e^{g_{1}^{\gamma}(t)x+g_{2}^{\gamma}(t)v+g_{3}^{\gamma}(t)}\nonumber\\
&+0.5\sigma^{2}v(g_{2}^{\gamma}(t))^{2}e^{g_{1}^{\gamma}(t)x+g_{2}^{\gamma}(t)v+g_{3}^{\gamma}(t)}+\rho\hat{\pi}\sigma vg_{1}^{\gamma}(t)g_{2}^{\gamma}(t)e^{g_{1}^{\gamma}(t)x+g_{2}^{\gamma}(t)v+g_{3}^{\gamma}(t)},
\end{align}
where $\hat{q}$ and $\hat{\pi}$ are given by \eqref{eq36} and \eqref{eq37}. Since $g_{1}^{\gamma}(t)$ and $g_{2}^{\gamma}(t)$ are assumed to be deterministic functions of $t$, it follows from \eqref{eq36} and \eqref{eq37} that $\hat{q}$ and $\hat{\pi}$ are independent of $x$ and $v$. Therefore, one can compare the coefficients in front of $x$ and $v$ in \eqref{eq38} to obtain
\begin{align}
 & \frac{\partial g_{1}^{\gamma}(t)}{\partial t}+r g_{1}^{\gamma}(t)=0, \label{eq39}\\
 & \frac{\partial g_{2}^{\gamma}(t)}{\partial t}+\xi\hat{\pi}g_{1}^{\gamma}(t)+0.5\hat{\pi}^{2}(g_{1}^{\gamma}(t))^{2}-\kappa g_{2}^{\gamma}(t)
+0.5\sigma^{2}(g_{2}^{\gamma}(t))^{2}+\rho\hat{\pi}\sigma g_{1}^{\gamma}(t)g_{2}^{\gamma}(t)=0, \label{eq40} \\
&\frac{\partial g_{3}^{\gamma}(t)}{\partial t}+(a\eta+a\eta_{2}\hat{q})g_{1}^{\gamma}(t)+0.5b^{2}\hat{q}^{2}(g_{1}^{\gamma}(t))^{2}+\kappa\theta g_{2}^{\gamma}(t)=0. \label{eq41}
\end{align}
By the boundary condition $g_{1}^{\gamma}(T)=-\gamma$, it can be inferred from \eqref{eq39} that $g_{1}^{\gamma}(t)=-\gamma e^{r(T-t)}$. Moreover, we are able to deduce the candidate equilibrium reinsurance and investment strategy as shown below
\begin{equation}\label{eq42}
\hat{q}(t)=\frac{a \eta_{2}}{b^{2}\mathbb{E}[\gamma]}e^{-r(T-t)},\quad
   \hat{\pi}(t)=\frac{\xi+\rho\sigma \int g_{2}^{\gamma}(t)\mathrm{d}\Gamma(\gamma) }{\mathbb{E}[\gamma]}e^{-r(T-t)}.
\end{equation}
Substituting \eqref{eq42} into \eqref{eq40}, we know that the ordinary differential equation (ODE) \eqref{eq40} is infinite-dimensional due to the distribution of $\gamma$, which is generally difficult to solve. In the sequel, we will carry out our investigation for special scenarios of an $n$-point distribution and a one-point distribution with respect to $\gamma$, respectively.

\subsection{Special case with $n$ possible risk aversions}
If the random risk aversion is assumed to follow an $n$-point distribution with possible outcomes $\gamma_{i}>0\;(i=1,\cdot\cdot\cdot,n)$ a.s. that are realized with the probabilities $p_{i}=p(\gamma_{i})$, where $\sum\nolimits_{i=1}^{n}p_{i}=1$, then we can obtain the following candidate equilibrium reinsurance and investment strategy
\begin{equation}\label{eq43}
\hat{q}(t)=\frac{a \eta_{2}}{b^{2}\sum\limits_{i=1}^{n}\gamma_{i} p_{i}}e^{-r(T-t)},\quad
\hat{\pi}(t)=\frac{\xi+\rho\sigma \sum\limits_{i=1}^{n}g_{2}^{\gamma_{i}}(t) p_{i}}{\sum\limits_{i=1}^{n}\gamma_{i} p_{i}}e^{-r(T-t)}.
\end{equation}
Moreover, for $i=1,\cdot\cdot\cdot,n$, we have
\begin{equation}\label{eq44}
  -\frac{\partial g_{2}^{\gamma_{i}}(t)}{\partial t}=\xi\hat{\pi}g_{1}^{\gamma_{i}}(t)+0.5\hat{\pi}^{2}(g_{1}^{\gamma_{i}}(t))^{2}-\kappa g_{2}^{\gamma_{i}}(t)
+0.5\sigma^{2}(g_{2}^{\gamma_{i}}(t))^{2}+\rho\hat{\pi}\sigma g_{1}^{\gamma_{i}}(t)g_{2}^{\gamma_{i}}(t)
\end{equation}
and
\begin{equation}\label{eq45}
  \frac{\partial g_{3}^{\gamma_{i}}(t)}{\partial t}+(a\eta+a\eta_{2}\hat{q})g_{1}^{\gamma_{i}}(t)+0.5b^{2}\hat{q}^{2}(g_{1}^{\gamma_{i}}(t))^{2}+\kappa\theta g_{2}^{\gamma_{i}}(t)=0
\end{equation}
with $g_{2}^{\gamma_{i}}(T)=g_{3}^{\gamma_{i}}(T)=0$, where $\hat{q}$ and $\hat{\pi}$ are given by \eqref{eq43} and $g_{1}^{\gamma_{i}}(t)=-\gamma_{i} e^{r(T-t)}$. Since $\hat{\pi}$ contains the term $\sum\nolimits_{i=1}^{n}g_{2}^{\gamma_{i}}(t) p_{i}$, the equation \eqref{eq44} constitutes a system consisting of $n$ coupled equations for $g_{2}^{\gamma_{i}}(t)$ with $i=1,\cdot\cdot\cdot,n$. Therefore, exploring whether there exists a solution to the general infinite-dimensional ODE \eqref{eq40} is reduced to the question of the solvability of the coupled equations. It is not hard to see that the right-hand side of \eqref{eq44} is continuous on its domain. Thus, relying on the existence theorem of Peano, there exists no less than one local solution for \eqref{eq44}. Then, the existence of solutions to \eqref{eq45} can be ensured. We note that due to $\gamma_{i}>0$ and $p_{i}>0$, $\sum\nolimits_{i=1}^{n}\gamma_{i} p_{i}$ cannot reach zero and thus the solutions of \eqref{eq44} and \eqref{eq45} cannot explode.

\begin{remark}
From the equations \eqref{eq43} and \eqref{eq44},
it can be noticed that the candidate equilibrium reinsurance strategy is not reliant on the parameters of the risky asset, and the parameters of the insurance market do not affect the candidate equilibrium investment strategy. The reason for this is that the insurance market and the financial market are assumed to be not correlated. Moreover, it can be found that the candidate equilibrium reinsurance and investment strategy is deterministic and state-independent.
\end{remark}

Under some mild conditions, we will prove that for the candidate equilibrium reinsurance and investment strategy \eqref{eq43}, the
admissibility prerequisites and relevant suppositions in Theorem \ref{theorem3.1} are satisfied, which implies that \eqref{eq43} is indeed an equilibrium reinsurance and investment strategy.

\begin{proposition}\label{proposition4.1}
For the case of $n$ possible risk aversions, if there exists a non-positive solution $g_{2}^{\gamma_{i}}(t)$ to the equation \eqref{eq44} and the parameters satisfy $-8\gamma_{i}\xi\bar{\pi}(t)+32\gamma_{i}^{2}\bar{\pi}^{2}(t)\leq\frac{\kappa^{2}}{2\sigma^{2}}$, where $\bar{\pi}$ is given by the equation \eqref{eq58}, then \eqref{eq43} is an equilibrium reinsurance and investment strategy.
\end{proposition}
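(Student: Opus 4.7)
The plan is to verify the two clusters of hypotheses in the verification theorem (Theorem 3.1): the admissibility of the candidate $\hat{u}$ from (4.43) in the sense of Definition 3.1, and condition (A) for the functions $U$, $Y^{\gamma_i}$ and $H$. Once both hold, Theorem 3.1 identifies (4.43) as an equilibrium strategy.

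First I would dispose of admissibility. By hypothesis each $g_2^{\gamma_i}(\cdot)$ exists on $[0,T]$; together with $g_1^{\gamma_i}(t)=-\gamma_i e^{r(T-t)}$ and the strict positivity of $\sum_i \gamma_i p_i$ that was already used to rule out blow-up, this makes $\hat{q}$ and $\hat{\pi}$ in (4.43) deterministic, continuous and hence bounded functions of $t$, with $\hat{q}(t)>0$. The square-integrability in Definition 3.1(ii) is then immediate, and the SDE (2.6) has a unique strong solution $X^{\hat{u}}$ by standard linear-growth/Lipschitz arguments. For condition (iii) of Definition 3.1 I will invoke the identity $\mathbb{E}_t[\varphi^{\gamma_i}(X^{\hat{u}}(T))]=Y^{\gamma_i}(t,X^{\hat{u}}(t),V(t))$ produced by (3.16) (whose legitimacy is secured by the moment bounds established next), so the integrand reduces to $-\tfrac{1}{\gamma_i}[g_1^{\gamma_i}(t)X^{\hat{u}}(t)+g_2^{\gamma_i}(t)V(t)+g_3^{\gamma_i}(t)]$, which has finite expectation because $X^{\hat{u}}$ has a finite second moment and $V$ a finite first moment.

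The substantive work is condition (A). Since $U=H=-\sum_i (p_i/\gamma_i)\bigl[g_1^{\gamma_i}(t)x+g_2^{\gamma_i}(t)v+g_3^{\gamma_i}(t)\bigr]$ is affine in $(x,v)$ with deterministic bounded coefficients, all the $\mathbb{L}^1_{\mathcal{F}}$ and $\mathbb{L}^2_{\mathcal{F}}$ requirements for $U$ and $H$ reduce to finite first and second moments of $X^{\hat{u}}(t)$ and $V(t)$, which are classical. For the ansatz $Y^{\gamma_i}(t,x,v)=-\tfrac{1}{\gamma_i}e^{g_1^{\gamma_i}(t)x+g_2^{\gamma_i}(t)v+g_3^{\gamma_i}(t)}$, however, each derivative carries the exponential factor, so after evaluation at $(t,X^{\hat{u}}(t),V(t))$ the required bounds all collapse to estimates of the form
\begin{equation*}
\mathbb{E}\!\left[V(t)^k\exp\!\bigl(\alpha(t)X^{\hat{u}}(t)+\beta(t)V(t)\bigr)\right]<\infty,\qquad k\in\{0,1,2\},
\end{equation*}
for deterministic coefficients $\alpha(t)=g_1^{\gamma_i}(t)$ and $\beta(t)=g_2^{\gamma_i}(t)$. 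The sign hypothesis $g_2^{\gamma_i}\leq 0$ controls the $\beta V$ contribution, while the $\alpha X^{\hat{u}}$ factor is the genuinely delicate object because substituting the explicit form of $X^{\hat{u}}$ produces the stochastic integral $\int_0^t \hat{\pi}(s)\sqrt{V(s)}\,dW_1(s)$.

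The main obstacle is exactly this exponential moment estimate on a Heston-type path, and it is where the parameter condition $-8\gamma_i\xi\bar{\pi}(t)+32\gamma_i^2\bar{\pi}^2(t)\leq \kappa^2/(2\sigma^2)$ enters. My plan is to follow the martingale approach of \cite{ZengX2013}: split the exponential of $X^{\hat{u}}$ by Cauchy--Schwarz into a $W_0$-exponential martingale (immediately handled by Novikov since $\hat{q}$ is bounded) and a factor depending on $W_1$, $W_2$ and $V$; then multiply and divide by a suitable exponential $W_1$-martingale so that what remains is $\exp\!\bigl(\int_0^T c(s)V(s)\,ds\bigr)$ times an exponential martingale, where $c(\cdot)$ is a deterministic function arising from completing the square. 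A direct computation shows that after the Cauchy--Schwarz doubling the resulting coefficient $c(t)$ equals exactly $-8\gamma_i\xi\bar{\pi}(t)+32\gamma_i^2\bar{\pi}^2(t)$, and the sharp CIR exponential-moment criterion $c(t)\leq \kappa^2/(2\sigma^2)$ is precisely the imposed hypothesis, yielding finiteness. Polynomial factors $V(t)^k$ are absorbed by an additional Cauchy--Schwarz step at the cost of slightly enlarging $c(t)$, which is still controlled by the same inequality with room built into the constants 8 and 32. With these bounds in hand, condition (A) holds for $Y^{\gamma_i}$ (hence, by linearity of $H$ in the $Y^{\gamma_i}$, also for $H$), Theorem 3.1 applies, and $\hat{u}$ in (4.43) is confirmed as an equilibrium reinsurance-investment strategy.
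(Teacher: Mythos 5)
Your proposal is correct and follows essentially the same route as the paper: both verify admissibility from the boundedness of the deterministic strategy, reduce condition (A) for the affine $U=H$ to moments of $X^{\hat{u}}$ and $V$, bound $|Y^{\gamma_i}|$ by an exponential of the wealth using $g_{2}^{\gamma_{i}}\leq 0$, factor off the $W_{0}$-martingale by independence, and split the remaining $W_{1}$/$V$ factor into an exponential martingale times $\exp\bigl(\int(-4\gamma_{i}\xi\bar{\pi}+16\gamma_{i}^{2}\bar{\pi}^{2})V\,\mathrm{d}s\bigr)$, whose squared expectation is finite by the CIR criterion of \cite{ZengX2013} exactly under the stated parameter condition. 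The only cosmetic difference is that the paper handles the polynomial factors $V^{k}$ by first establishing $\mathbb{E}[\sup_{t}|Y^{\gamma_i}|^{4}]<\infty$ and then applying Cauchy--Schwarz against $\mathbb{E}[\sup_{t}|V|^{\varrho}]<\infty$, rather than by enlarging the exponent $c(t)$, but this is the same idea.
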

\begin{proof}
See Appendix B.
\end{proof}

\begin{remark}
Proposition \ref{proposition4.1} shows that the equilibrium reinsurance and investment strategy and the relevant value function are well-defined under some mild conditions. Although it may seem difficult to directly verify the conditions, we will verify these conditions by numerical experiments in Section 5.
\end{remark}

\begin{remark}
The equilibrium reinsurance strategy given by \eqref{eq43} shows that the insurer is prone to purchasing reinsurance if the first moment of random risk aversion $\gamma$ is higher. Moreover, the ratio $\frac{a\eta_{2}}{b^{2}}$ exerts an influence on the insurer's prospective profit when it functions as a reinsurer. Furthermore, the time horizon is another crucial factor that influences the insurer's equilibrium reinsurance strategy. Denote $\mathbb{E}[\gamma]=\sum\limits_{i=1}^{n}\gamma_{i} p_{i}$. If $\frac{a \eta_{2}}{b^{2}\mathbb{E}[\gamma]}\geq1$, when $T\in\left(\frac{1}{r}\ln\left(\frac{a \eta_{2}}{b^{2}\mathbb{E}[\gamma]}\right),\infty\right)$, then the equilibrium reinsurance strategy is acquiring new business only on the condition that $T-t<\frac{1}{r}\ln\left(\frac{a \eta_{2}}{b^{2}\mathbb{E}[\gamma]}\right)$; when $T\in\left(0,\frac{1}{r}\ln\left(\frac{a \eta_{2}}{b^{2}\mathbb{E}[\gamma]}\right)\right)$, then the equilibrium reinsurance strategy is acquiring new business throughout the whole period of insurance-investment. On the other hand, if $\frac{a \eta_{2}}{b^{2}\mathbb{E}[\gamma]}<1$, the hazard of obtaining new business would be extremely high, making it difficult for the insurer to accept. As a result, the insurer would purchase reinsurance to transfer risk over the entire insurance-investment horizon.
\end{remark}

\subsection{Special case with one possible risk aversion}
If the distribution of the risk aversion degenerates into one outcome $\gamma$, then it follows from \eqref{eq40} and \eqref{eq42} that
the candidate equilibrium reinsurance and investment strategy can be given by
\begin{equation}\label{eq46}
   \left\{ \begin{aligned}
   &\hat{q}(t)=\frac{a \eta_{2}}{\gamma b^{2}}e^{-r(T-t)},\\
   &\hat{\pi}(t)=\frac{\xi+\rho\sigma  g_{2}^{\gamma}(t) }{\gamma}e^{-r(T-t)},
  \end{aligned}\right.
\end{equation}
where $g_{2}^{\gamma}(t)$ is captured by
\begin{equation}\label{eq47}
  \frac{\partial g_{2}^{\gamma}(t)}{\partial t}-0.5\xi^{2}-(\kappa+\rho\sigma\xi)g_{2}^{\gamma}(t)+0.5\sigma^{2}(1-\rho^{2})(g_{2}^{\gamma}(t))^{2}=0.
\end{equation}
Let $\tau=T-t$. Then \eqref{eq47} can be rewritten as
\begin{equation*}
  \frac{\partial g_{2}^{\gamma}(\tau)}{\partial \tau}=0.5k_{1}-k_{2}g_{2}^{\gamma}(\tau)+0.5k_{3}(g_{2}^{\gamma}(\tau))^{2},
\end{equation*}
where $k_{1}=-\xi^{2}$, $k_{2}=\kappa+\rho\sigma\xi$ and $k_{3}=\sigma^{2}(1-\rho^{2})$. By Lemma 1.1 of \cite{Bergen2018}, we can obtain that $g_{2}^{\gamma}(t)$ satisfies
\begin{equation}\label{eq48}
 g_{2}^{\gamma}(t)=\frac{k_{1}\left(e^{k_{4}(T-t)}-1\right)}{2k_{4}+(k_{2}+k_{4})\left(e^{k_{4}(T-t)}-1\right)}
\end{equation}
with $k_{4}=\sqrt{k_{2}^{2}-k_{1}k_{3}}$. Moreover, we have
$$
g_{3}^{\gamma}(t)=-0.5\frac{a^{2}\eta_{2}^{2}}{b^{2}}(T-t)+\frac{a\eta\gamma}{r}\left(1-e^{r(T-t)}\right)
+\frac{2\kappa\theta}{k_{3}}\ln\left[\frac{2k_{4}e^{\frac{1}{2}(k_{2}+k_{4})(T-t)}}{2k_{4}+(k_{2}+k_{4})\left(e^{k_{4}(T-t)}-1\right)}\right].
$$

Note that $g_{2}^{\gamma}(t)<0$ in the equation \eqref{eq48}. Since $g_{1}^{\gamma}(t)$, $g_{2}^{\gamma}(t)$ and $g_{3}^{\gamma}(t)$ have unique global solutions, one can follow completely along the lines of $n$ possible risk aversions to verify the admissibility prerequisites and the suppositions in Theorem \ref{theorem3.1}, but in a more direct way. Thus, the following results are available.

\begin{proposition}\label{proposition4.2}
For the case of one possible risk aversion, if the parameters satisfy $-8\gamma_{i}\xi\bar{\pi}(t)+32\gamma_{i}^{2}\bar{\pi}^{2}(t)\leq\frac{\kappa^{2}}{2\sigma^{2}}$, where $\bar{\pi}(t)=\frac{\xi+\rho\sigma  g_{2}^{\gamma}(t) }{\gamma}$ and $g_{2}^{\gamma}(t)$ is given by the equation \eqref{eq48}, then \eqref{eq46} is an equilibrium reinsurance and investment strategy.
\end{proposition}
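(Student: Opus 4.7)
The plan is to invoke Theorem \ref{theorem3.1} with the explicit ansatz from Section 4, exploiting the global closed-form solutions for $g_1^{\gamma}$, $g_2^{\gamma}$ and $g_3^{\gamma}$ available in the degenerate one-point case. First I would set $Y^{\gamma}(t,x,v) = -\frac{1}{\gamma}\exp\{g_1^{\gamma}(t)x + g_2^{\gamma}(t)v + g_3^{\gamma}(t)\}$ with $g_1^{\gamma}(t) = -\gamma e^{r(T-t)}$, $g_2^{\gamma}$ given by \eqref{eq48} and $g_3^{\gamma}$ by the displayed formula. Taking $U = H = (\varphi^{\gamma})^{-1}(Y^{\gamma})$ yields a function affine in $(x,v)$, hence of class $C^{1,2,2}(\mathcal{Q})$, with the correct boundary conditions $U(T,x,v) = x$ and $Y^{\gamma}(T,x,v) = \varphi^{\gamma}(x)$. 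The equations \eqref{eq39}--\eqref{eq41} guarantee $\mathcal{A}^{\hat{u}}Y^{\gamma} = 0$, verifying \eqref{eq16}. In the single-$\gamma$ setting the pseudo HJB \eqref{eq15} collapses to exactly this equation, and the supremum is attained at $\hat{u}$ defined by \eqref{eq46} through the first-order conditions \eqref{eq32}--\eqref{eq33}.

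Next I would verify the three items of Definition \ref{definition3.1} for the candidate $\hat{u} = (\hat{q},\hat{\pi})$. Since $\hat{q}(t) = \frac{a\eta_2}{\gamma b^2}e^{-r(T-t)} > 0$ and $g_2^{\gamma}(t) < 0$, both components are deterministic and continuous on $[0,T]$, so (ii) holds with the boundedness supplying the required $L^2$ integrability. For (i), inserting $\hat{u}$ into \eqref{eq6} yields a linear SDE in $X$ whose coefficients depend on $V(\cdot)$ only through bounded functions of time times $\sqrt{V}$; the Feller condition $2\kappa\theta > \sigma^2$ keeps $V > 0$ almost surely, and standard linear-SDE theory delivers the unique strong solution. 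For (iii), the ansatz reduces $(\varphi^{\gamma})^{-1}(\mathbb{E}_t[\varphi^{\gamma}(X^{\hat{u}}(T))])$ to the affine quantity $-\frac{1}{\gamma}[g_1^{\gamma}(t)x + g_2^{\gamma}(t)v + g_3^{\gamma}(t)]$, provided the process $\{Y^{\gamma}(s,X^{\hat{u}}(s),V(s))\}_{s\in[t,T]}$ is a true martingale.

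The main obstacle is precisely this martingale identification, together with verification of condition (A). Applying It\^{o}'s formula to $Y^{\gamma}(s,X^{\hat{u}}(s),V(s))$ and using $\mathcal{A}^{\hat{u}}Y^{\gamma} = 0$ yields a local martingale whose stochastic integrands involve factors of the form $\sqrt{V(s)}\,Y^{\gamma}(s,X^{\hat{u}}(s),V(s))$. Controlling their second moments reduces, after exploiting the linear structure of $X^{\hat{u}}$, to bounding expectations of the form $\mathbb{E}\bigl[\exp\bigl(\int_t^T \alpha(s)V(s)\,ds\bigr)\bigr]$ for explicit $\alpha(s)$ assembled from $g_2^{\gamma}$, $\hat{\pi}$, $\xi$, $\rho$ and $\sigma$. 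Following the CIR exponential-moment technique from \cite{ZengX2013} (as the authors explicitly mirror), such functionals are finite whenever $\alpha(s) \leq \frac{\kappa^2}{2\sigma^2}$ for every $s$. Substituting the worst-case combination of terms yields exactly the stated hypothesis $-8\gamma\xi\bar{\pi}(t) + 32\gamma^2\bar{\pi}^2(t) \leq \frac{\kappa^2}{2\sigma^2}$.

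With these exponential moment bounds in hand, condition (A) on $U$, $Y^{\gamma}$ and $H$ follows by routine Cauchy--Schwarz estimation applied to each term in \eqref{eq14}, the local martingale upgrades to a true martingale so item (iii) of Definition \ref{definition3.1} is satisfied, and all hypotheses of Theorem \ref{theorem3.1} hold. Hence \eqref{eq46} is an equilibrium reinsurance and investment strategy with value function $V(t,x,v) = U(t,x,v) = -\frac{1}{\gamma}[g_1^{\gamma}(t)x + g_2^{\gamma}(t)v + g_3^{\gamma}(t)]$. Compared with Proposition \ref{proposition4.1}, the gain is that $g_2^{\gamma}$ and $g_3^{\gamma}$ are given globally in closed form, so no Peano-style local existence argument is required and the moment estimates become essentially explicit.
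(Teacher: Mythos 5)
Your proposal is correct and follows essentially the same route as the paper: the paper proves Proposition \ref{proposition4.2} by simply invoking the Appendix~B argument for Proposition \ref{proposition4.1} specialized to a one-point distribution, i.e.\ verification theorem plus admissibility checks, the Burkh\"older--Davis--Gundy and Cauchy--Schwarz moment estimates for condition (A), and the CIR exponential-moment bounds from \cite{ZengX2013} under the hypothesis $-8\gamma\xi\bar{\pi}+32\gamma^{2}\bar{\pi}^{2}\leq\kappa^{2}/(2\sigma^{2})$. Your observation that the closed-form global solutions for $g_{2}^{\gamma}$ and $g_{3}^{\gamma}$ make the Peano existence step unnecessary matches the paper's remark that the one-point case can be handled ``in a more direct way.''
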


\begin{remark}
When there is just one possible risk aversion, the equilibrium reinsurance and investment strategy \eqref{eq46} is identical to the optimal reinsurance and investment strategy (40) with $\beta=0$ in \cite{Yi2013}. This confirms that for the situation where there is just one possible risk aversion, the equilibrium reinsurance and investment strategy solving the problem \eqref{eq8} coincides with the optimal reinsurance and investment strategy solving the problem \eqref{eq7}.
\end{remark}

\section{Numerical illustrations}
\setcounter{equation}{0}
This section is devoted to carrying out some numerical experiments for demonstrating the impacts of model parameters upon the equilibrium reinsurance and investment strategy \eqref{eq43}. During numerical illustrations, except when otherwise specified, the fundamental insurance and financial market parameters are obtained from \cite{Bi2019} and \cite{Kraft2013}, respectively, which are given in Table \ref{tab1}. Moreover, following \cite{Desmettre2023}, we consider the two point-distribution for risk aversion, where the parameters are specified within two particular cases: (I) $\gamma_{1}=0.5$, $\gamma_{2}=4$ and $p_{1}=p_{2}=0.5$; (II) $\gamma_{1}=0.5$, $\gamma_{2}=4$, $p_{1}=0.8$ and $p_{2}=0.2$. In addition, we set $T=10(years)$ and $T=100(years)$.

\begin{table}[H]
 \caption{Values of insurance and financial market parameters.}
 \label{tab1}
 \small
  \centering
   \begin{tabular}{>{\hfil}p{1cm}<{\hfil} >{\hfil}p{1.1cm}<{\hfil} >{\hfil}p{1cm}<{\hfil} >{\hfil}p{1cm}<{\hfil} >{\hfil}p{1.2cm}<{\hfil}
   >{\hfil}p{1cm}<{\hfil} >{\hfil}p{1cm}<{\hfil} >{\hfil}p{1.1cm}<{\hfil} >{\hfil}p{1.1cm}<{\hfil} >{\hfil}p{1cm}<{\hfil}>{\hfil}p{0.6cm}<{\hfil}  }
     \hline
{$\eta_{1}$}& {$\eta_{2}$}& 	{$\lambda_{1}$} & {$\mu_{1}$} & {$\mu_{2}$} & {$r$} & {$\xi$}&{$\kappa$} &{$\theta$}&{$\sigma$} & {$\rho$}   \\\hline
0.3 & 0.5 &   1& 0.1 &0.2 & 0.05 & $\frac{7}{15}$ & 5 &$(0.15)^{2}$& 0.25 & -0.5 \\
     \hline
   \end{tabular}
\end{table}

\subsection{Impacts of model parameters on the equilibrium investment strategy}
The equation \eqref{eq43} shows that the equilibrium investment strategy depends on $g_{2}^{\gamma_{i}}$ with $i=1,2$. It can be derived from the equation \eqref{eq44} that $g_{2}^{\gamma_{i}}\;(i=1,2)$ are captured by two coupled ODEs, whose coefficients depend on time $t$. This results in that there is no analytical solution to the equation \eqref{eq44} in general. On the other hand, in algorithms for numerically solving the forward ODEs, the Predictor-Corrector or Modified-Euler method \cite{Duffy2006} not only improves the accuracy of the solution, but also enhances the stability of the numerical solution. Thus, we attempt to adopt the Predictor-Corrector or Modified-Euler method to numerically calculate the equilibrium investment strategy. To begin with, we need to convert the backward ODE \eqref{eq44} into an equivalent forward ODE. To be specific, we let $h_{2}^{\gamma_{i}}(T-t)=g_{2}^{\gamma_{i}}(t)$ and rewrite the equation \eqref{eq44} as follows
\begin{align*}
  -\frac{\partial g_{2}^{\gamma_{i}}(T-s)}{\partial (T-s)}=&\xi\hat{\pi}(T-s)g_{1}^{\gamma_{i}}(T-s)+0.5(\hat{\pi}(T-s))^{2}(g_{1}^{\gamma_{i}}(T-s))^{2}-\kappa g_{2}^{\gamma_{i}}(T-s)\\
&+0.5\sigma^{2}(g_{2}^{\gamma_{i}}(T-s))^{2}+\rho\hat{\pi}(T-s)\sigma g_{1}^{\gamma_{i}}(T-s)g_{2}^{\gamma_{i}}(T-s),
\end{align*}
which is equivalent to the following forward ODE
\begin{align}\label{eq64}
  \frac{\partial h_{2}^{\gamma_{i}}(s)}{\partial s}=&\xi\hat{\pi}(T-s)g_{1}^{\gamma_{i}}(T-s)+0.5(\hat{\pi}(T-s))^{2}(g_{1}^{\gamma_{i}}(T-s))^{2}-\kappa h_{2}^{\gamma_{i}}(s)\nonumber\\
&+0.5\sigma^{2}(h_{2}^{\gamma_{i}}(s))^{2}+\rho\hat{\pi}(T-s)\sigma g_{1}^{\gamma_{i}}(T-s)h_{2}^{\gamma_{i}}(s)
\end{align}
with $ h_{2}^{\gamma_{i}}(0)=g_{2}^{\gamma_{i}}(T)=0$ and $s\in(0,T]$, where
$$\hat{\pi}(T-s)=\frac{\xi+\rho\sigma \sum\limits_{i=1}^{2}g_{2}^{\gamma_{i}}(T-s) p_{i}}{\sum\limits_{i=1}^{2}\gamma_{i} p_{i}}e^{-r(T-(T-s))}=\frac{\xi+\rho\sigma \sum\limits_{i=1}^{2}h_{2}^{\gamma_{i}}(s) p_{i}}{\sum\limits_{i=1}^{2}\gamma_{i} p_{i}}e^{-rs}.$$

Next, we divide the time interval $[0,T]$ into $M$ parts and denote by $l=\frac{T}{M}$. Let $t_{m}=ml$ and
\begin{align*}
F_{i}(t_{m},g_{1}^{\gamma_{i}}(T-t_{m}),h_{2}^{\gamma_{1}}(t_{m}),h_{2}^{\gamma_{2}}(t_{m}))
=&\frac{\xi^{2}+\xi\rho\sigma\sum\limits_{i=1}^{2}h_{2}^{\gamma_{i}}(t_{m}) p_{i}}{\sum\limits_{i=1}^{2}\gamma_{i} p_{i}}e^{-rt_{m}}g_{1}^{\gamma_{i}}(T-t_{m})-\kappa h_{2}^{\gamma_{i}}(t_{m})\\
&+0.5[\frac{\xi+\rho\sigma \sum\limits_{i=1}^{2}h_{2}^{\gamma_{i}}(t_{m}) p_{i}}{\sum\limits_{i=1}^{2}\gamma_{i} p_{i}}e^{-rt_{m}}]^{2}(g_{1}^{\gamma_{i}}(T-t_{m}))^{2}+0.5\sigma^{2}(h_{2}^{\gamma_{i}}(t_{m}))^{2}\\
&+\rho\frac{\xi+\rho\sigma \sum\limits_{i=1}^{2}h_{2}^{\gamma_{i}}(t_{m}) p_{i}}{\sum\limits_{i=1}^{2}\gamma_{i} p_{i}}e^{-rt_{m}}\sigma g_{1}^{\gamma_{i}}(T-t_{m})h_{2}^{\gamma_{i}}(t_{m}),
\end{align*}
where $m=0,\cdot\cdot\cdot,M$ and $i=1,2$. Then, we give Algorithm 1 to explore the equilibrium investment strategy.

\begin{algorithm}
\caption{Exploring equilibrium investment strategy.}
\label{alg::conjugateGradient}
  \begin{algorithmic}[1]
    \Require
      the expressions of $g_{1}^{\gamma_{1}}$, $g_{1}^{\gamma_{2}}$, $\hat{\pi}$, $\bar{\pi}$, $F_{1}$ and $F_{2}$;
      the values of parameters $T$, $M$, $r$, $\xi$, $\kappa$, $\sigma$, $\rho$, $\gamma_{1}$, $\gamma_{2}$, $p_{1}$ and $p_{2}$;
    \Ensure
      the value of the equilibrium investment strategy $\hat{\pi}$;
    \State initial $h_{2}^{\gamma_{1}}(0)=h_{2}^{\gamma_{2}}(0)=0$;
     \For{$m=0,\;m< M$}
      \State compute $\bar{h}_{2}^{\gamma_{1}}(t_{m+1})=h_{2}^{\gamma_{1}}(t_{m})
      +lF_{1}(t_{m},g_{1}^{\gamma_{1}}(T-t_{m}),h_{2}^{\gamma_{1}}(t_{m}),h_{2}^{\gamma_{2}}(t_{m}))$;
      \State compute $\bar{h}_{2}^{\gamma_{2}}(t_{m+1})=h_{2}^{\gamma_{2}}(t_{m})
      +lF_{2}(t_{m},g_{1}^{\gamma_{2}}(T-t_{m}),h_{2}^{\gamma_{1}}(t_{m}),h_{2}^{\gamma_{2}}(t_{m}))$;
      \State compute
      \begin{align*}
      h_{2}^{\gamma_{1}}(t_{m+1})=&h_{2}^{\gamma_{1}}(t_{m})
      +\frac{l}{2}\left[F_{1}(t_{m},g_{1}^{\gamma_{1}}(T-t_{m}),h_{2}^{\gamma_{1}}(t_{m}),h_{2}^{\gamma_{2}}(t_{m}))\right.\\
      &\left.+
      F_{1}(t_{m+1},g_{1}^{\gamma_{1}}(T-t_{m+1}),\bar{h}_{2}^{\gamma_{1}}(t_{m+1}),\bar{h}_{2}^{\gamma_{2}}(t_{m+1}))\right];
      \end{align*}
       \State compute
      \begin{align*}
      h_{2}^{\gamma_{2}}(t_{m+1})=&h_{2}^{\gamma_{2}}(t_{m})
      +\frac{l}{2}\left[F_{2}(t_{m},g_{1}^{\gamma_{2}}(T-t_{m}),h_{2}^{\gamma_{1}}(t_{m}),h_{2}^{\gamma_{2}}(t_{m}))\right.\\
      &\left.+
      F_{2}(t_{m+1},g_{1}^{\gamma_{2}}(T-t_{m+1}),\bar{h}_{2}^{\gamma_{1}}(t_{m+1}),\bar{h}_{2}^{\gamma_{2}}(t_{m+1}))\right];
      \end{align*}
      \State compute $-8\gamma_{_{i}}\xi\bar{\pi}(T-t_{m+1})+32\gamma_{_{i}}^{^{2}}\bar{\pi}^{2}(T-t_{m+1})$;
    \While {$(h_{2}^{\gamma_{i}}(t_{m+1})\leq0)\;and\;(-8\gamma_{_{i}}\xi\bar{\pi}(T-t_{m+1})+32\gamma_{_{i}}^{^{2}}\bar{\pi}^{2}(T-t_{m+1})\leq\frac{\kappa^{2}}{2\sigma^{^{2}}})\; for\;i=1,2$}
      \State compute $\hat{\pi}(T-t_{m+1})=\frac{\xi+\rho\sigma \sum\limits_{i=1}^{2}h_{2}^{\gamma_{i}}(t_{m+1}) p_{i}}{\sum\limits_{i=1}^{2}\gamma_{i} p_{i}}e^{-rt_{m+1}}$;
      \State $m=m+1$;
    \EndWhile
    \EndFor
    \State reverse $\hat{\pi}(T-t_{m+1})$ with $m=0,\cdot\cdot\cdot,M-1$.
  \end{algorithmic}
\end{algorithm}

Figure \ref{fig1} shows the impacts of the risk-free interest rate $r$ on the equilibrium investment strategy. Looking at Figure \ref{fig1}, we observe that the equilibrium investment strategy decreases with $r$. In other words, a higher risk-free interest rate leads to a lower equilibrium investment strategy. This accords with our common sense. When the risk-free interest rate becomes higher, the insurer is more inclined to invest profits in the risk-free asset, thereby leading to a reduction in the investment directed towards the risky asset. Moreover, we can see that when the risk-free interest rate remains fixed, an increase in the expected value of risk aversion results in a smaller investment by the insurer in the risky asset. In addition, another phenomenon is reflected in Figure \ref{fig1}, that is, with the given risk-free interest rate parameter, the equilibrium investment strategy gets smaller when time $t$ grows. Notably, as time $t$ approaches $100$, the equilibrium investment strategy is inclined towards 0. This substantiates the theory of decreasing life-cycle investment profiles in the context of the exponential utility, which is consistent with the case of the power utility in \cite{Desmettre2023}. It is fascinating to note that a similar phenomenon occurs in the subsequent Figure \ref{fig2}.

What is depicted in Figure \ref{fig2} is the impacts of the premium related to volatility $\xi$ on the equilibrium investment strategy. From Figure \ref{fig2}, we notice that the equilibrium amount invested in the risky asset goes up when $\xi$ is on the rise. One plausible interpretation for such a situation lies in the fact that with a larger $\xi$, the risky asset has a greater appreciation rate. This benefits the insurer in attaining more expected returns, thereby inducing she/he to invest a greater amount of money in the risky asset. Moreover, Figure \ref{fig2} demonstrates that the insurer whose expected value of risk aversion is higher tends to cut down on the investment in the risky asset.

\begin{figure}
    \centering
    \begin{minipage}{5cm}
      \includegraphics[width=5cm]{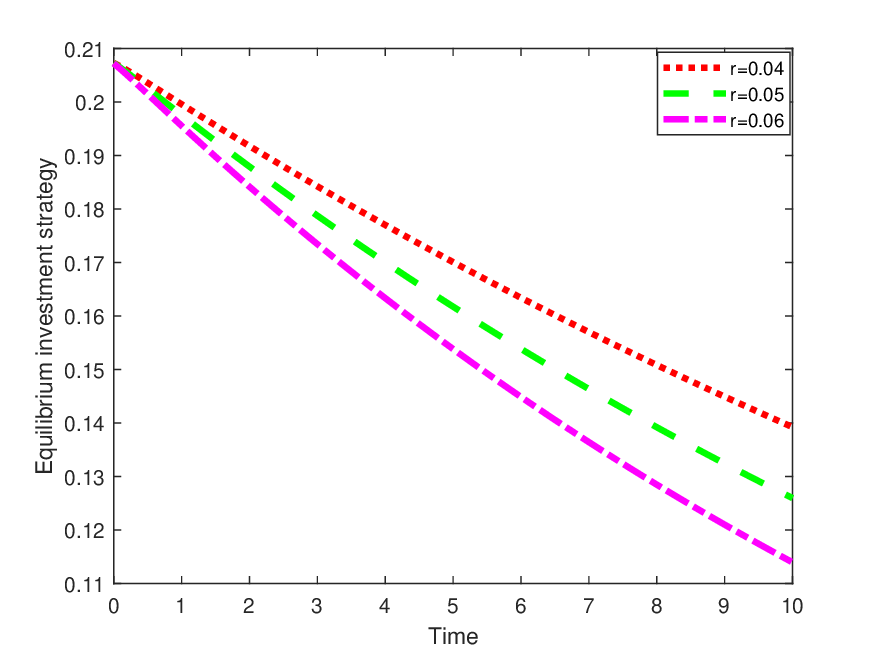}
      (a) Case (I) when $T=10$
    \end{minipage}
     \begin{minipage}{5cm}
      \includegraphics[width=5cm]{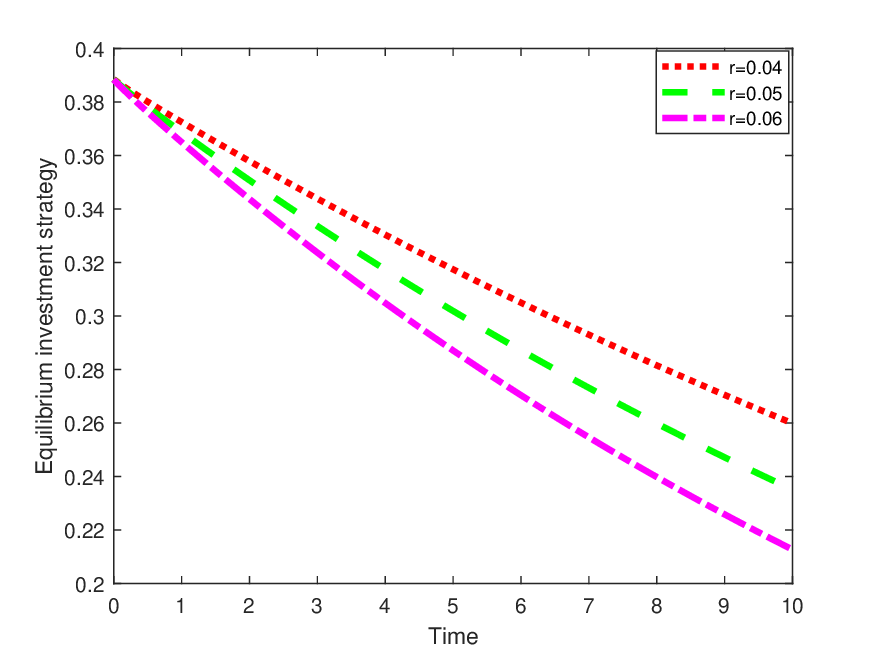}
      (b) Case (II) when $T=10$
    \end{minipage}

    \begin{minipage}{5cm}
      \includegraphics[width=5cm]{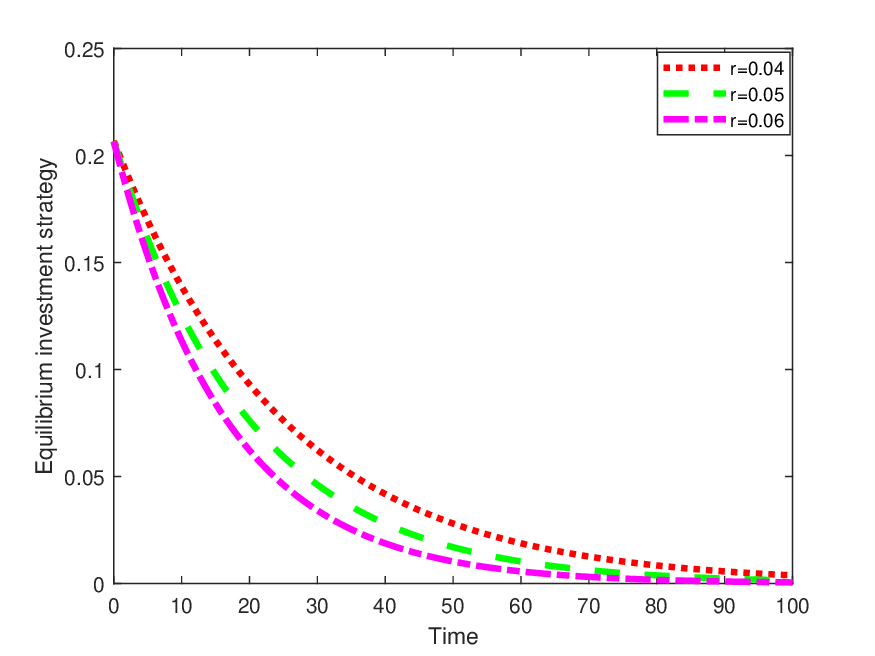}
      (c) Case (I) when $T=100$
    \end{minipage}
    \begin{minipage}{5cm}
      \includegraphics[width=5cm]{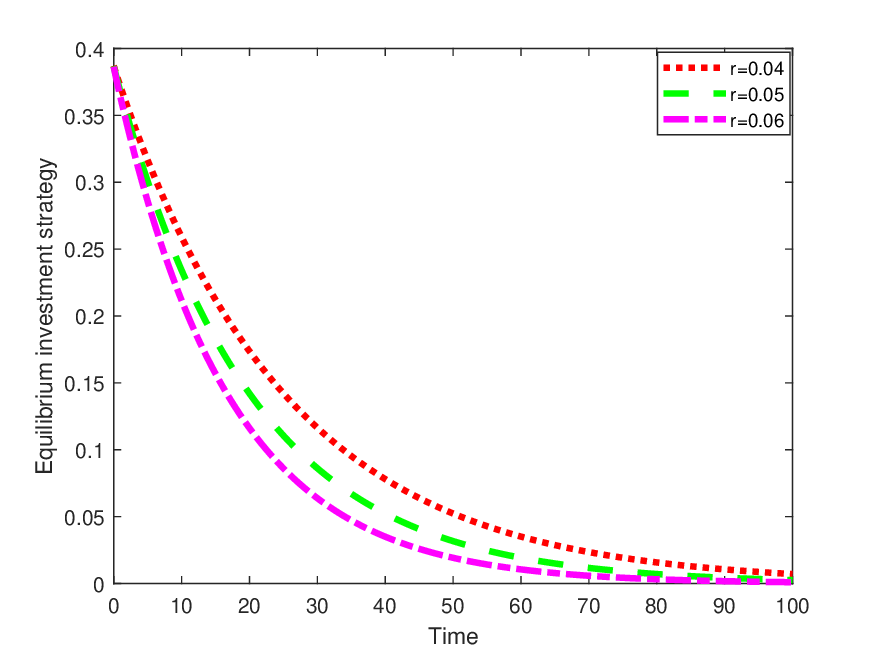}
      \caption*{(d) Case (II) when $T=100$}
    \end{minipage}
    \caption{The impacts of $r$ on the equilibrium investment strategy.}
\label{fig1}
\end{figure}

\begin{figure}
    \centering
    \begin{minipage}{5cm}
      \includegraphics[width=5cm]{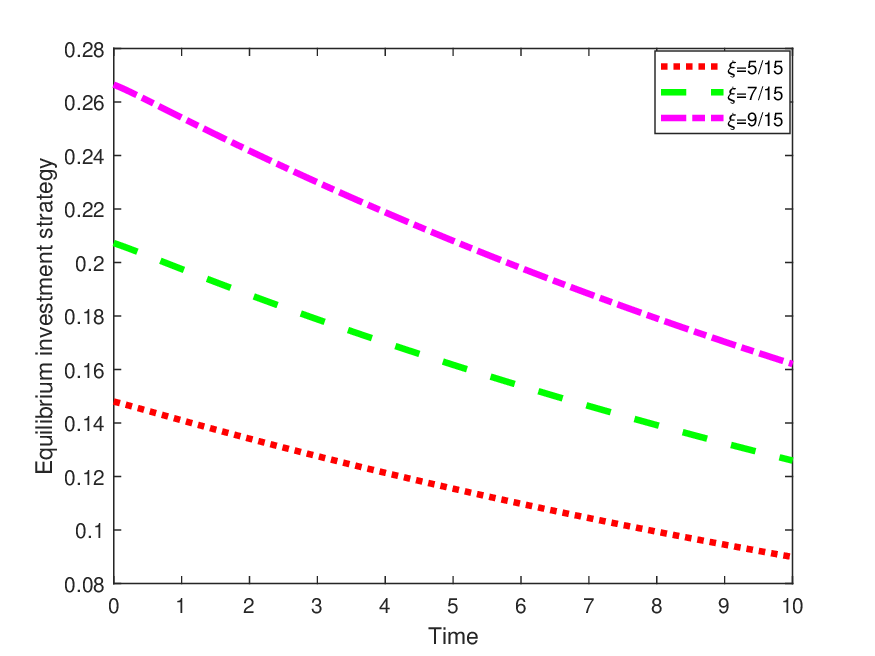}
      (a) Case (I) when $T=10$
    \end{minipage}
     \begin{minipage}{5cm}
      \includegraphics[width=5cm]{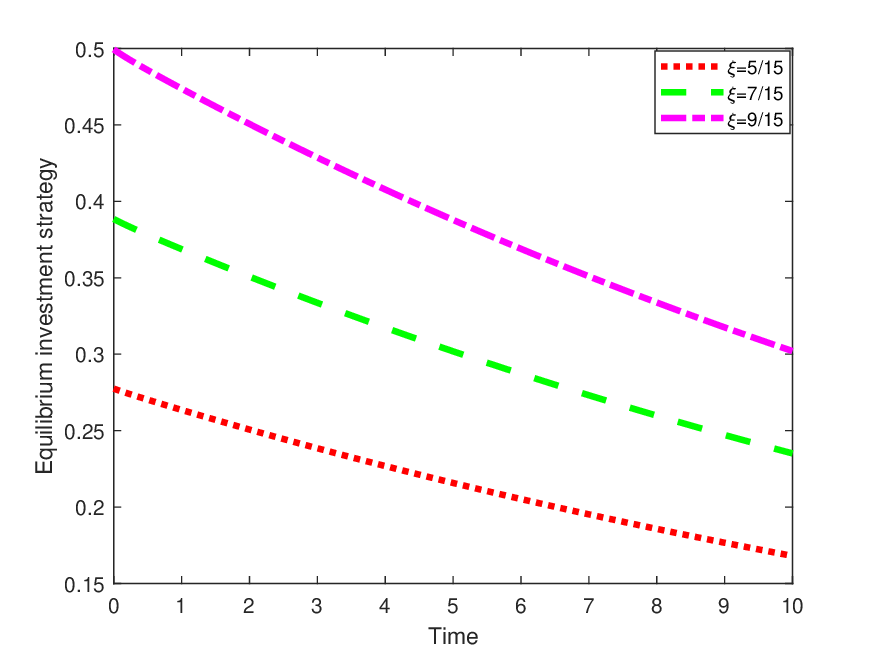}
      (b) Case (II) when $T=10$
    \end{minipage}

    \begin{minipage}{5cm}
      \includegraphics[width=5cm]{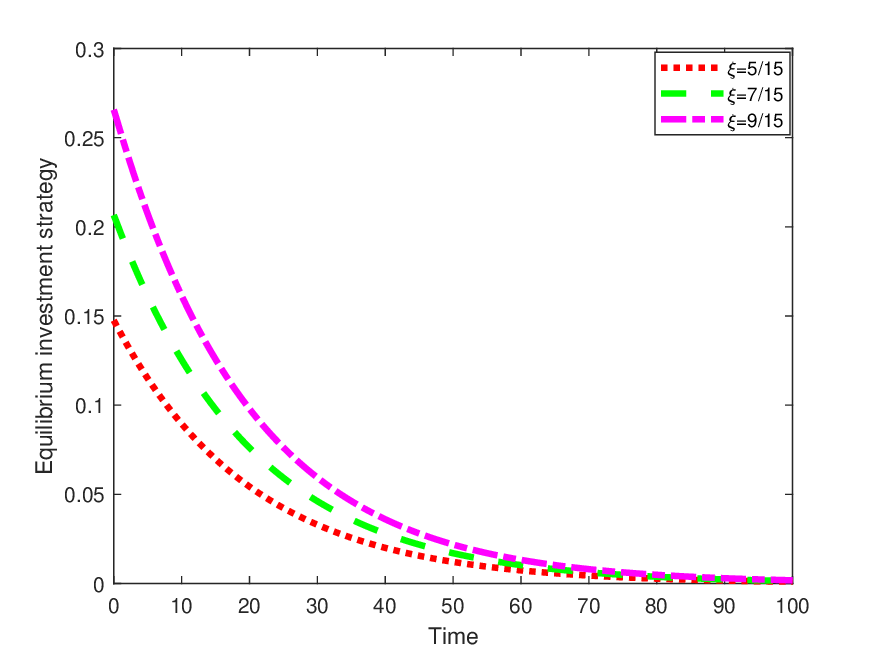}
      (c) Case (I) when $T=100$
    \end{minipage}
    \begin{minipage}{5cm}
      \includegraphics[width=5cm]{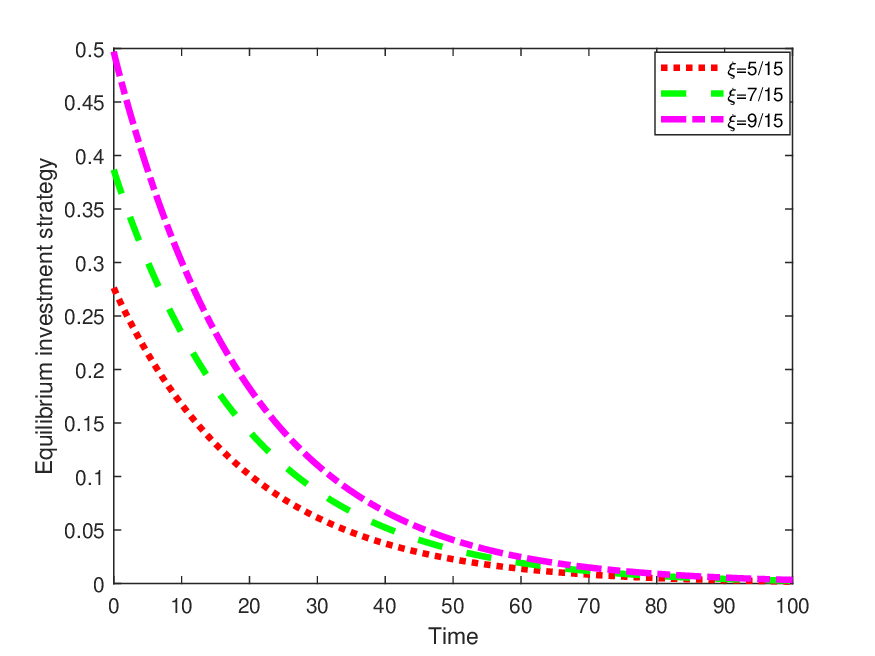}
      \caption*{(d) Case (II) when $T=100$}
    \end{minipage}
    \caption{The impacts of $\xi$ on the equilibrium investment strategy.}
\label{fig2}
\end{figure}

%

\begin{figure}
    \centering
    \begin{minipage}{5cm}
      \includegraphics[width=5cm]{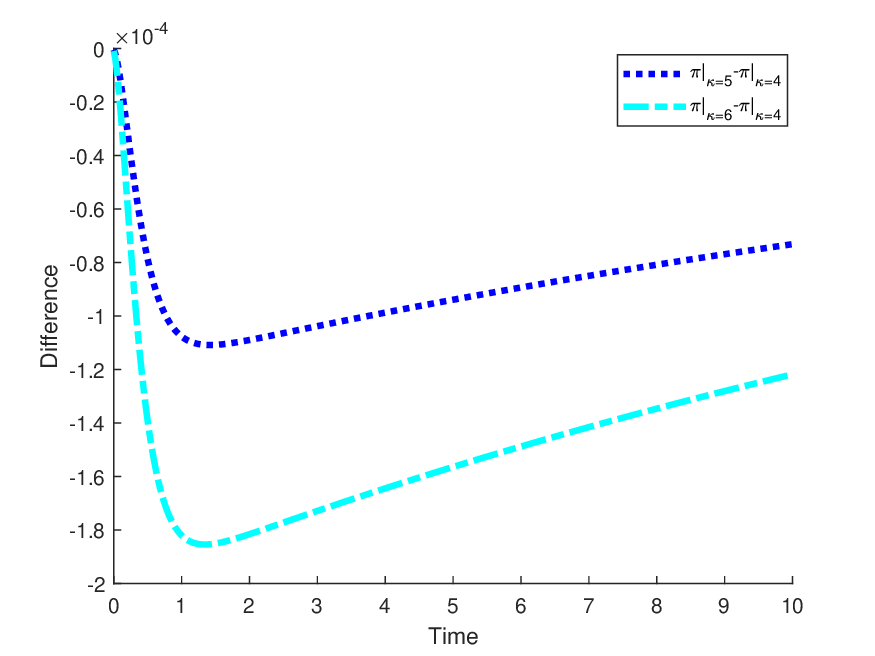}
      (a) Case (I) when $T=10$
    \end{minipage}
     \begin{minipage}{5cm}
      \includegraphics[width=5cm]{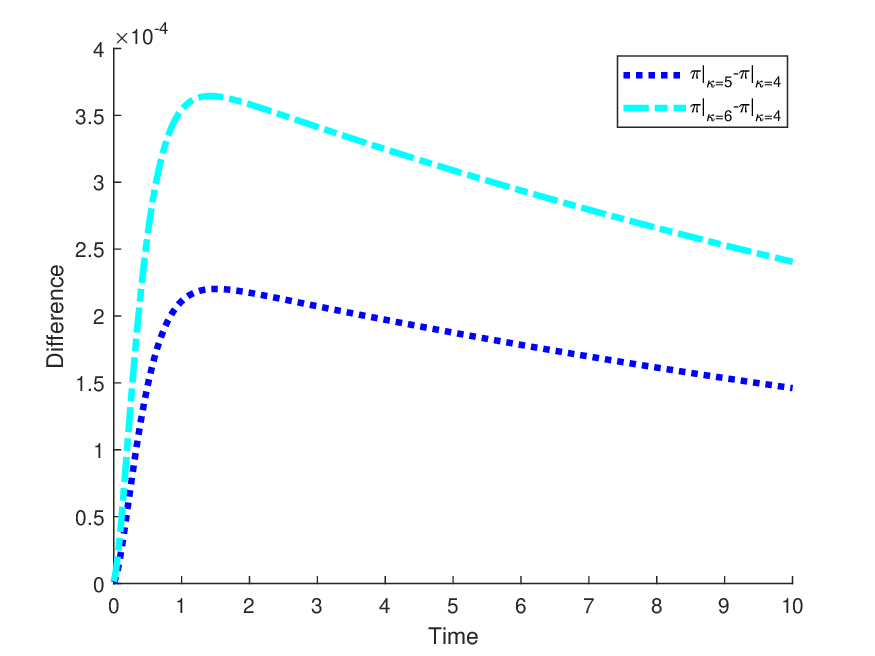}
      (b) Case (II) when $T=10$
    \end{minipage}

    \begin{minipage}{5cm}
      \includegraphics[width=5cm]{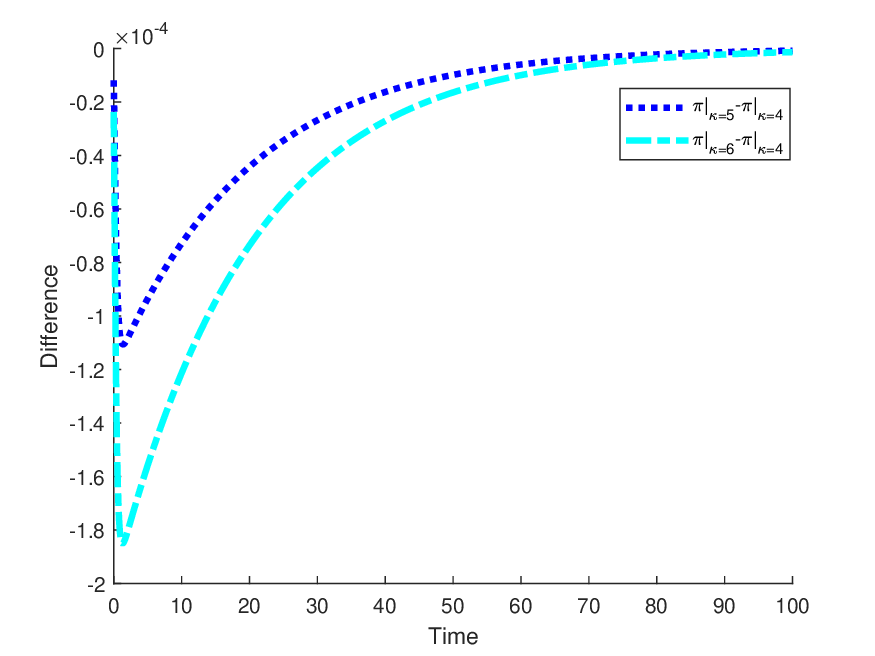}
      (c) Case (I) when $T=100$
    \end{minipage}
    \begin{minipage}{5cm}
      \includegraphics[width=5cm]{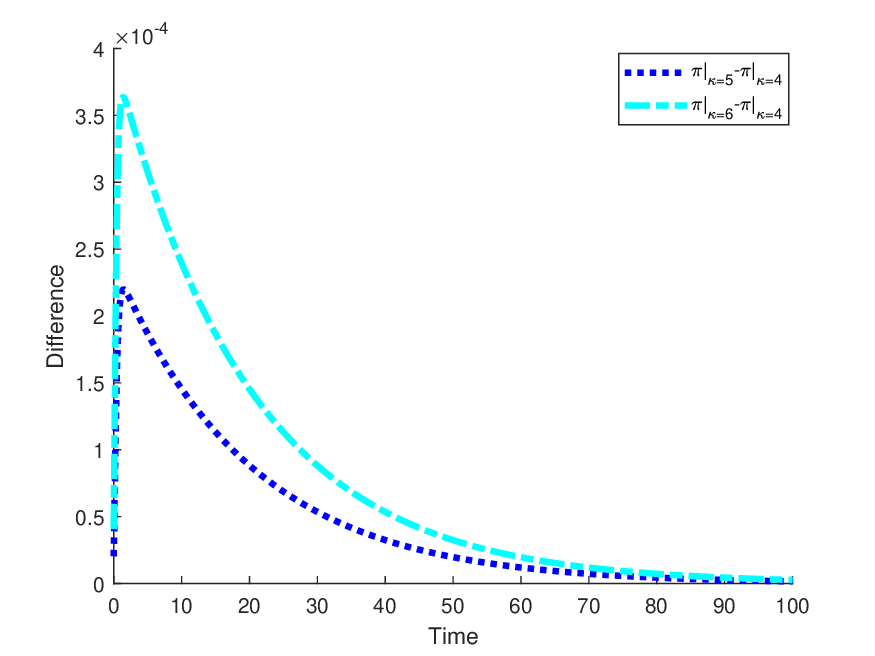}
      \caption*{(d) Case (II) when $T=100$}
    \end{minipage}
    \caption{The differences between the equilibrium investment strategies w.r.t. $\kappa$ when $\rho=-0.5$.}
\label{fig31}
\end{figure}


\begin{figure}
    \centering
    \begin{minipage}{5cm}
      \includegraphics[width=5cm]{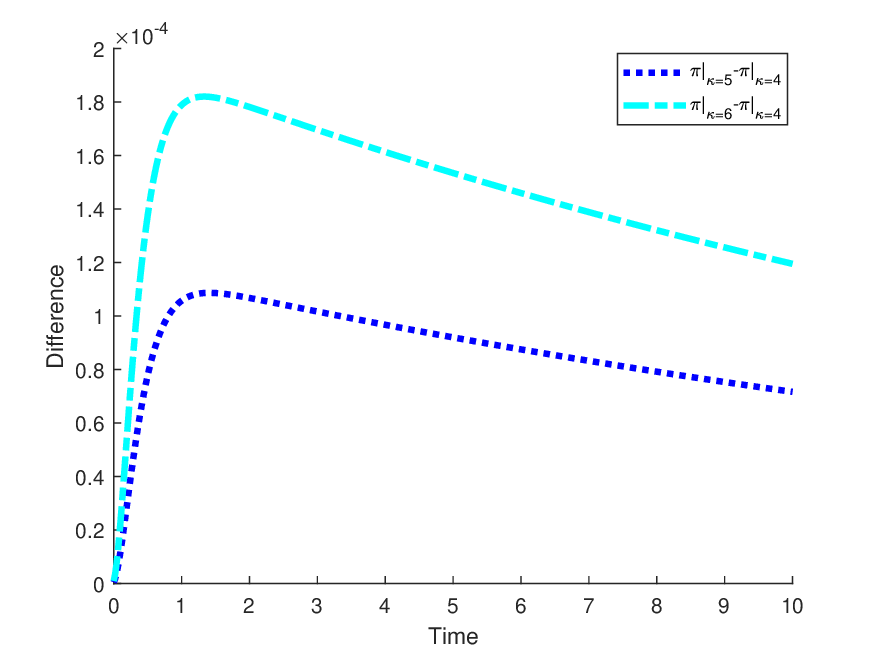}
      (a) Case (I) when $T=10$
    \end{minipage}
     \begin{minipage}{5cm}
      \includegraphics[width=5cm]{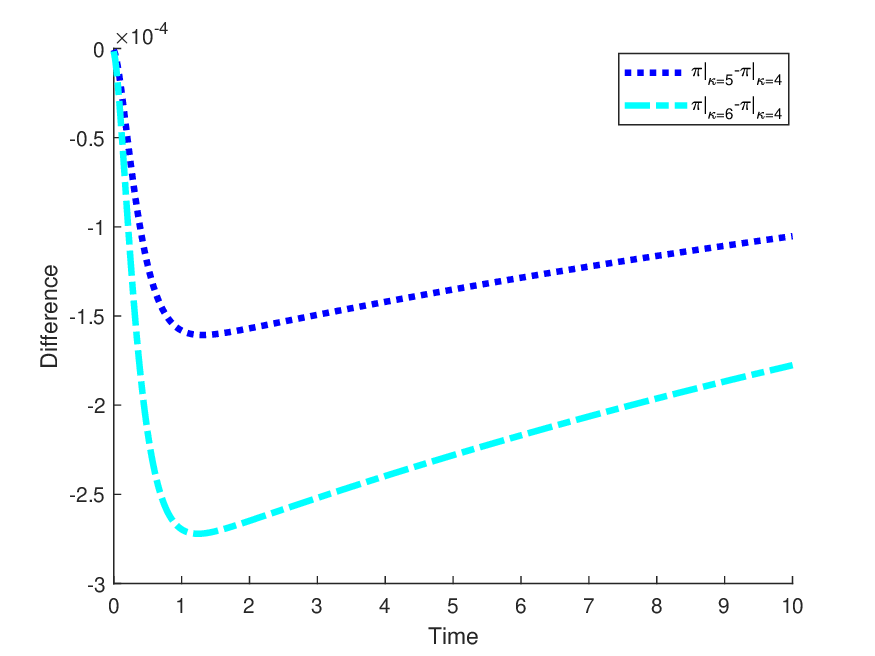}
      (b) Case (II) when $T=10$
    \end{minipage}

    \begin{minipage}{5cm}
      \includegraphics[width=5cm]{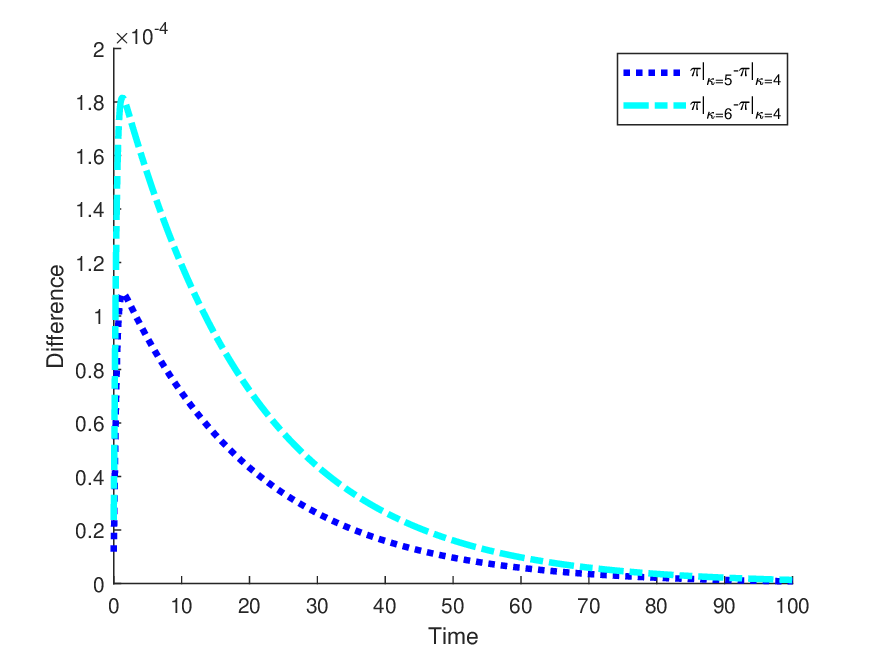}
      (c) Case (I) when $T=100$
    \end{minipage}
    \begin{minipage}{5cm}
      \includegraphics[width=5cm]{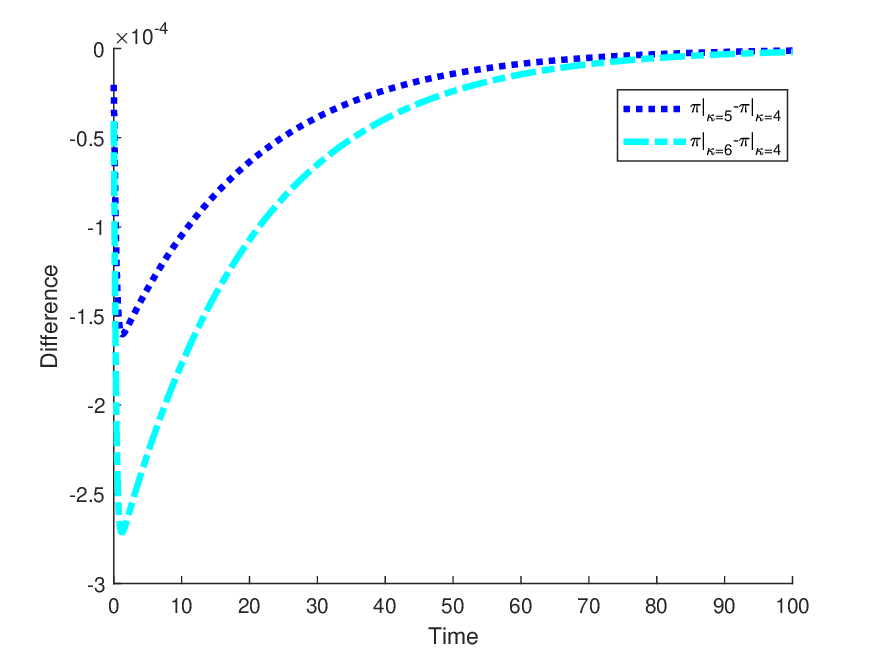}
      \caption*{(d) Case (II) when $T=100$}
    \end{minipage}
    \caption{The differences between the equilibrium investment strategies w.r.t. $\kappa$ when $\rho=0.5$.}
\label{fig32}
\end{figure}

Figures \ref{fig31} and \ref{fig32} illustrate the impacts of the mean-reversion rate $\kappa$ on the equilibrium investment strategy. It is interesting to see from Figure \ref{fig31} (a) that when $\rho<0$, the differences between equilibrium investment strategies under different mean-reversion rates generate $U$-shape lines with the increase of $t$. Overall, the differences between equilibrium investment strategies under different mean-reversion rates are negative, which implies that when $\mathbb{E}[\gamma]=2.25$, the insurer who has a higher mean-reversion rate will allocate less wealth to the risky asset for investment. Furthermore, the higher the mean-reversion rate is, the smaller the amount of investment in the risky asset will be. In contrast, Figure \ref{fig31} (b) reveals an opposite situation. As a whole the differences between equilibrium investment strategies under different mean-reversion rates are positive, which shows that when $\mathbb{E}[\gamma]=1.2$, the insurer with higher mean-reversion rate will invest a greater amount of wealth in the risky asset. Specially, the greater the mean-reversion rate, the more the amount of money invested in the risky asset. What is common is that, Figure \ref{fig31} (c) and (d) disclose that as time $t$ approaches 100, the differences between equilibrium investment strategies under different mean-reversion rates tend to zero. Since $\rho<0$,  the fluctuations in the risky asset price and the changes in the volatility of the risky asset occur in different ways. If $\kappa$ goes up, then $V(t)$ will become more stable. Due to the negative correlation between $V(t)$ and $S(t)$, there exists an elevated probability of a decline in the price of the risky asset. Therefore, the insurer having higher expected value of risk aversion would decrease the investment in the risky asset. However, although there is a decreased probability of a rise in the risky asset's price, the insurer with lower expected value of risk aversion is willing to take risks to boost the investment amount in the risky asset. For the purpose of reflecting the impacts of the mean-reversion rate $\kappa$ on the equilibrium investment strategy when $\rho>0$, Figure \ref{fig32} is provided, which presents opposite situations to Figure \ref{fig31}. If $\rho>0$, then the uncertainties regarding the risky asset's price and its volatility vary in a congruent manner. A larger value of $\kappa$ indicates a more stabilized volatility level of the risky asset. Therefore, the insurer with higher expected value of risk aversion is apt to enlarge the investment amount in the risky asset. However, the insurer with lower expected value of risk aversion is more inclined to accept risks and to pursue high returns on the risky asset with greater volatility, thereby decreasing the investment amount when the value of $\kappa$ increases. These diverse situations imply that it is meaningful and necessary to consider the random risk aversion for investment in the incomplete financial market.



\begin{figure}
    \centering
    \begin{minipage}{5cm}
      \includegraphics[width=5cm]{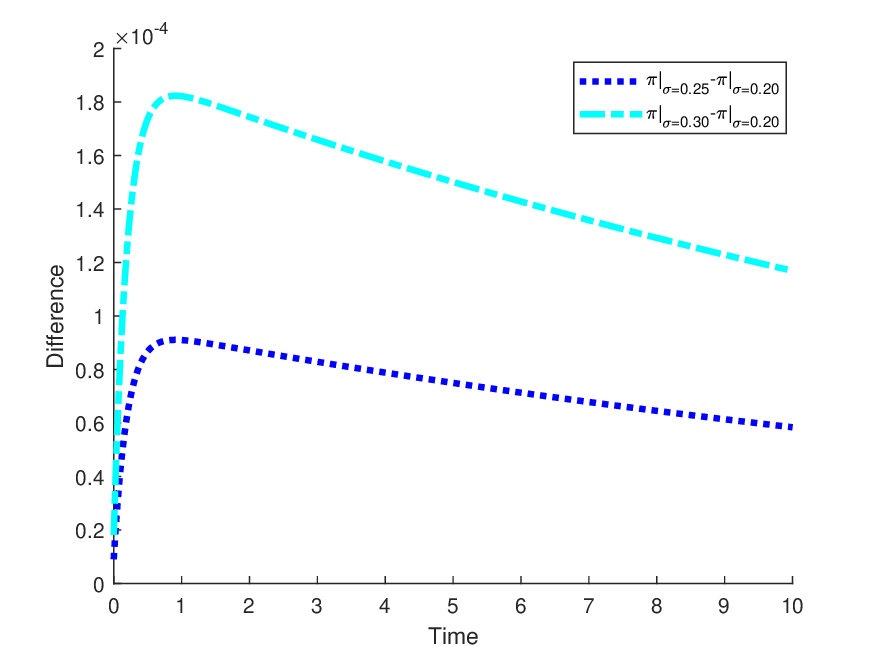}
      (a) Case (I) when $T=10$
    \end{minipage}
    \begin{minipage}{5cm}
      \includegraphics[width=5cm]{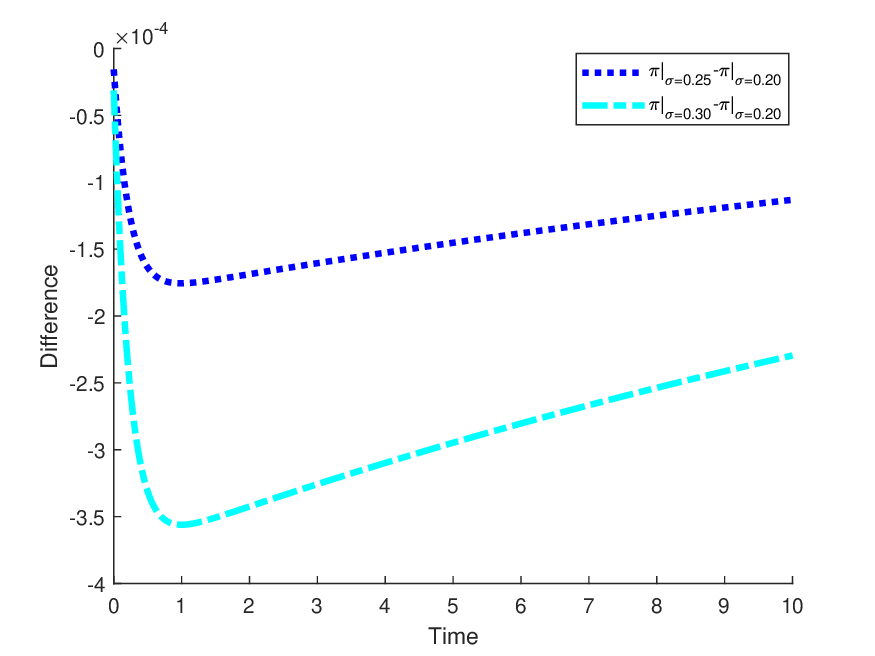}
      (b) Case (II) when $T=10$
   \end{minipage}

   \begin{minipage}{5cm}
     \includegraphics[width=5cm]{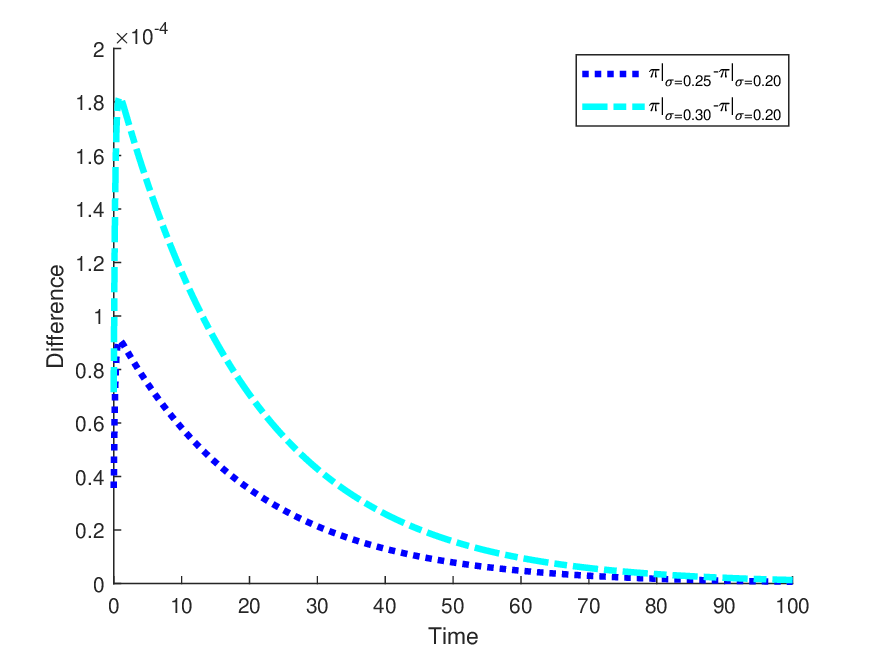}
     (c) Case (I) when $T=100$
    \end{minipage}
    \begin{minipage}{5cm}
      \includegraphics[width=5cm]{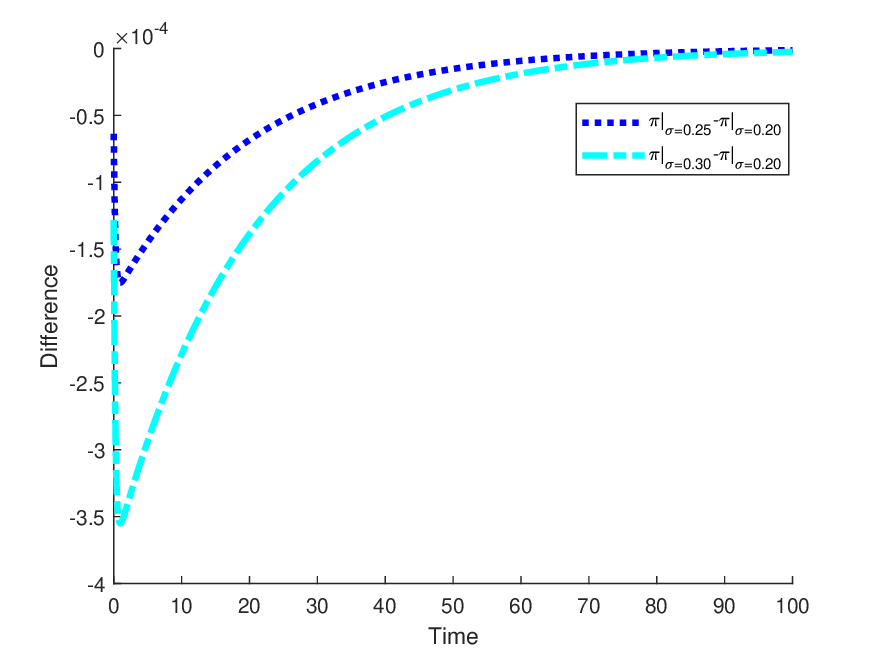}
      \caption*{(d) Case (II) when $T=100$}
    \end{minipage}
    \caption{The differences between the equilibrium investment strategies w.r.t. $\sigma$ when $\rho=-0.5$.}
\label{fig41}
\end{figure}

\begin{figure}
    \centering
    \begin{minipage}{5cm}
      \includegraphics[width=5cm]{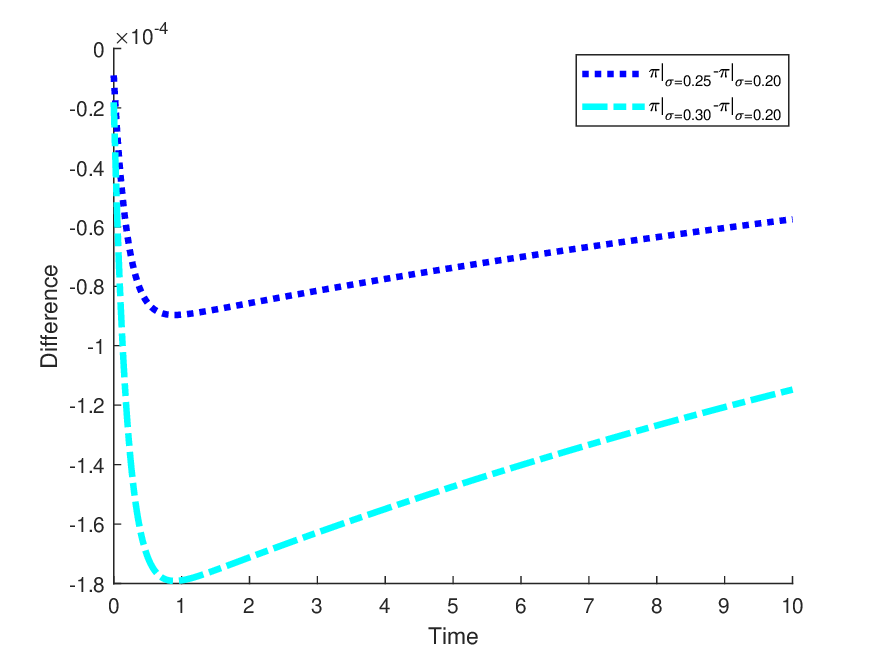}
      (a) Case (I) when $T=10$
    \end{minipage}
     \begin{minipage}{5cm}
      \includegraphics[width=5cm]{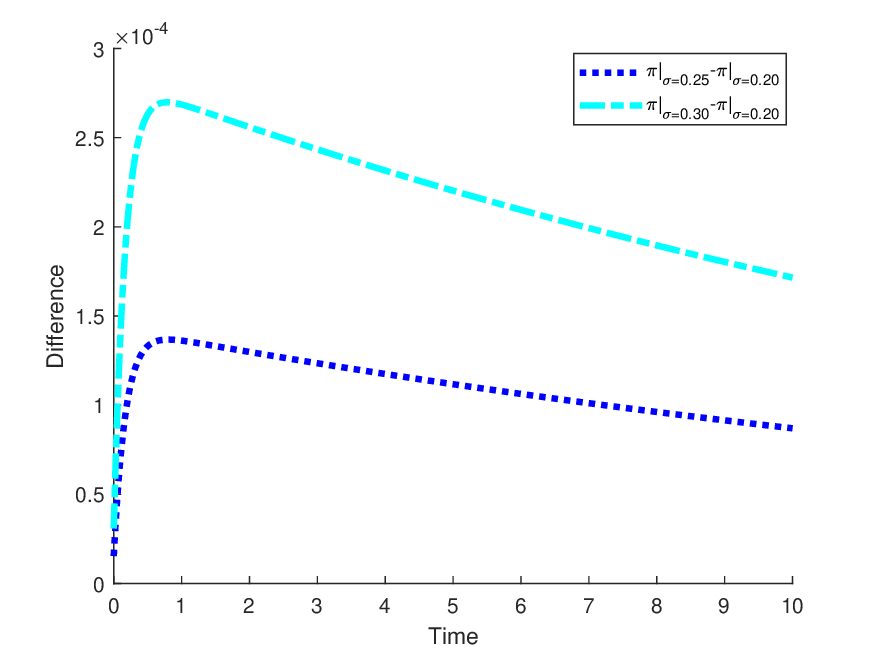}
      (b) Case (II) when $T=10$
    \end{minipage}

    \begin{minipage}{5cm}
      \includegraphics[width=5cm]{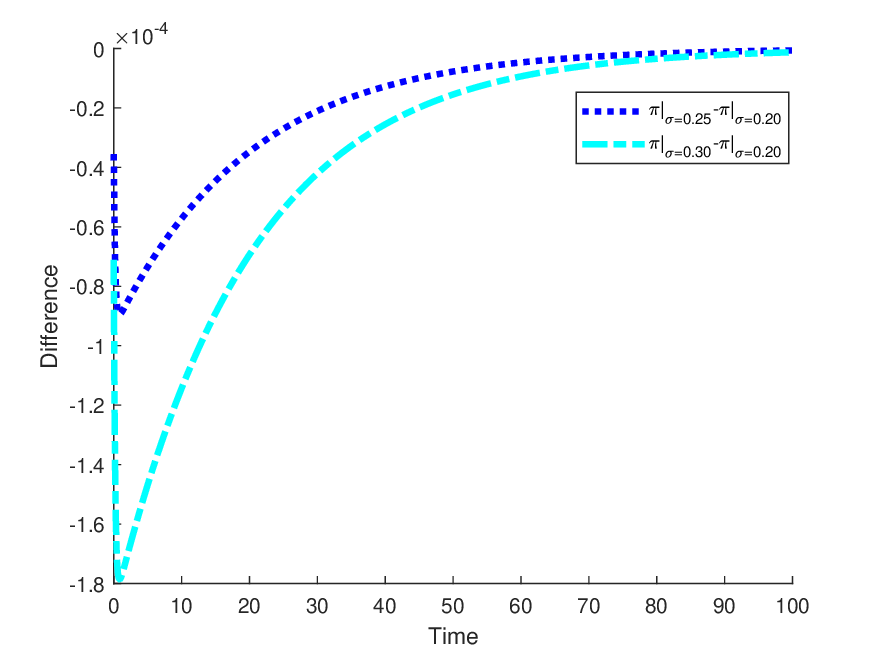}
      (c) Case (I) when $T=100$
    \end{minipage}
    \begin{minipage}{5cm}
      \includegraphics[width=5cm]{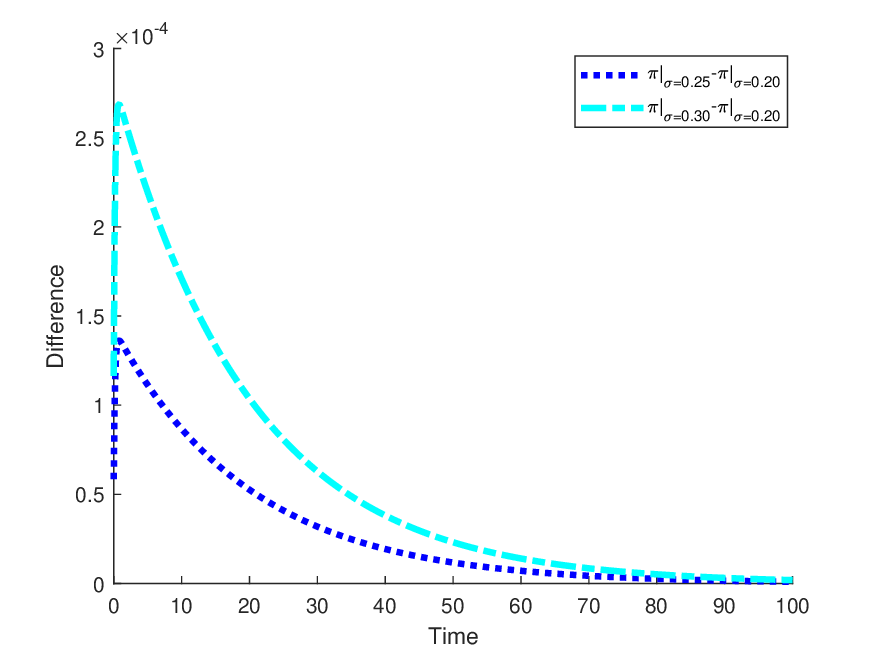}
      \caption*{(d) Case (II) when $T=100$}
    \end{minipage}
    \caption{The differences between the equilibrium investment strategies w.r.t. $\sigma$ when $\rho=0.5$.}
\label{fig42}
\end{figure}
Figures \ref{fig41} and \ref{fig42} display the impacts of $\sigma$ on the equilibrium investment strategy. As shown in Figure \ref{fig41} (a), we see that the differences between the equilibrium investment strategies corresponding to different values of $\sigma$ are positive when $\rho<0$, which means that if $\mathbb{E}[\gamma]=2.25$, then the insurer with larger values of $\sigma$ will allocate more funds to the risky asset for investment. In comparison to Figure \ref{fig41} (a), Figure \ref{fig41} (b) shows an opposite situation that the differences between the equilibrium investment strategies with various values of $\sigma$ are negative. Specially, when $\mathbb{E}[\gamma]=1.2$, the insurer with higher values of $\sigma$ will allocate a lesser amount to the risky asset for investment. Furthermore, Figure \ref{fig41} (c) and (d) illustrate that the differences tend to zero when $t$ approaches 100. On the other hand, Figure \ref{fig42} plots the differences between the equilibrium investment strategies for diverse values of $\sigma$ given that $\rho>0$. Overall, the phenomena reflected in Figure \ref{fig42} are exactly opposite to those in Figure \ref{fig41}. It is widely known that if there is an increase in $\sigma$, then the fluctuation of the risky asset's volatility will be a bit more intense. Thus, when $\rho>0$, the insurer possessing a greater expected value of risk aversion will cut down on the investment amount in the risky asset to avoid risks, while the insurer with lower expected value of risk aversion tend to pursue high returns in the risky asset with high volatility by raising the amount of investment in the risky asset. These correspond to the situations in Figure \ref{fig42}. When $\rho<0$, the insurer with higher expected value of risk aversion would put a larger amount of the fortune into the risky asset to counterbalance risks, while the insurer with lower expected value of risk aversion is willing to reduce the investment amount in the risky asset. There are in line with the phenomena reflected in Figure \ref{fig41}. Thus, it is quite necessary to study the random risk aversion for the portfolio selection problems in incomplete financial markets.



\begin{figure}
    \centering
    \begin{minipage}{5cm}
      \includegraphics[width=5cm]{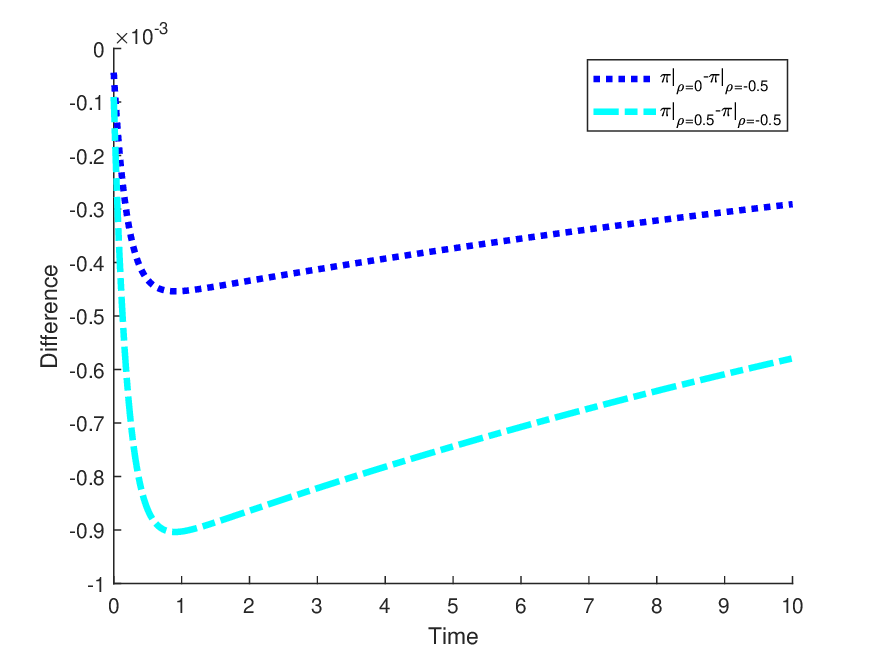}
     (a) Case (I) when $T=10$
    \end{minipage}
     \begin{minipage}{5cm}
      \includegraphics[width=5cm]{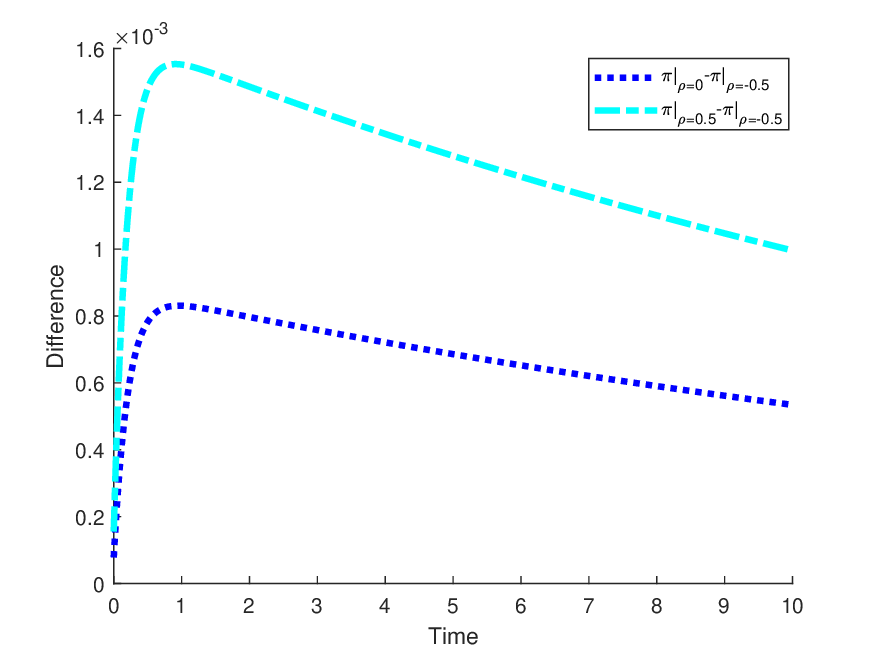}
      (b) Case (II) when $T=10$
    \end{minipage}

    \begin{minipage}{5cm}
      \includegraphics[width=5cm]{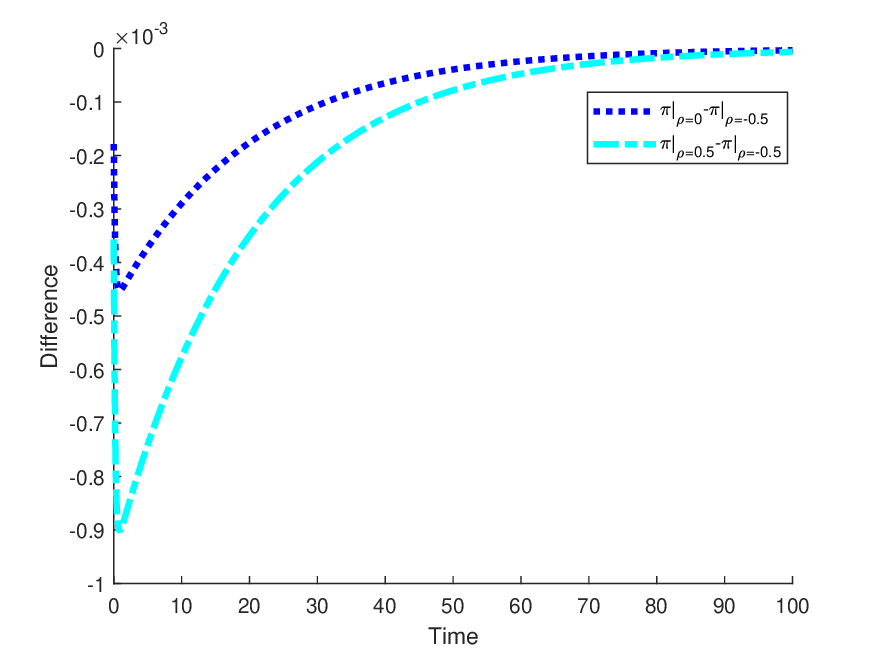}
      (c) Case (I) when $T=100$
    \end{minipage}
    \begin{minipage}{5cm}
      \includegraphics[width=5cm]{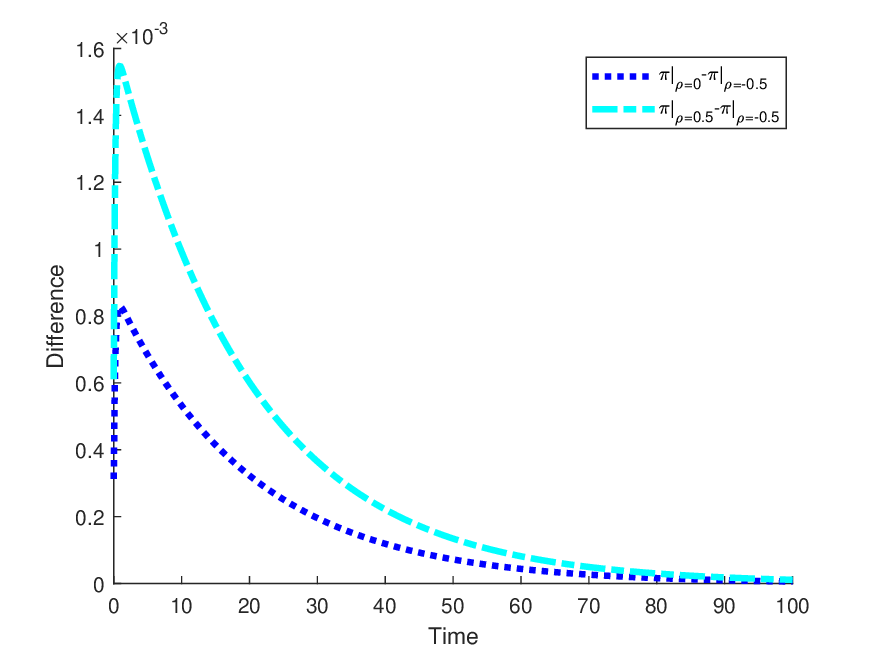}
      \caption*{(d) Case (II) when $T=100$}
    \end{minipage}
    \caption{The differences between the equilibrium investment strategies w.r.t. $\rho$.}
\label{fig51}
\end{figure}

Figure \ref{fig51} shows how the correlation coefficient $\rho$ impacts the equilibrium investment strategy. It can be concluded from Figure \ref{fig51} (a) and (b) that the differences between the equilibrium investment tactics corresponding to different values of $\rho$ show opposite trends for the cases (I) and (II). Figure \ref{fig51} (a) implies that the equilibrium investment strategy diminishes in tandem with the growth of  $\rho$, while Figure \ref{fig51} (b) reflects that the equilibrium investment strategy increases with $\rho$. Moreover, \ref{fig51} (c) and (d) illustrate that the differences will approach zero as $t$ tends to 100. Since $\rho$ denotes the correlation between $S(t)$ and $V(t)$, the fluctuations in $S(t)$ and $V(t)$ occur in an identical manner when $\rho>0$. Therefore, with the increase of $\rho$, the insurer possessing a greater expected value of risk aversion will curtail the investment sum in the risky asset to avert risks. Conversely, when $\rho$ takes a negative value, the instabilities of $S(t)$ and $V(t)$ alter in disparate ways. Then the insurer whose expected value of risk aversion is on the higher side would augment the investment sum in the risky asset for the purpose of hedging risks. Compared with the insurer having higher expected value of risk aversion, the insurer having lower expected value of risk aversion is willing to adopt opposite investment strategies to accept risks for the purposes of obtaining high returns.

\subsection{Impacts of model parameters on the equilibrium reinsurance strategy}
In this subsection, we will present some sensitivity analysis on the impacts of parameters upon the equilibrium reinsurance strategy. Meanwhile, we also provide some numerical examples to illustrate the impacts.

It follows from the equation \eqref{eq43} that there is an analytical expression for the equilibrium reinsurance strategy. Therefore, according to the equation \eqref{eq43}, we can derive that $\frac{\partial \hat{q}(t)}{\partial r}<0$, $\frac{\partial \hat{q}(t)}{\partial \eta_{2}}>0$, $\frac{\partial \hat{q}(t)}{\partial \lambda_{1}}=0$, $\frac{\partial \hat{q}(t)}{\partial \mu_{1}}>0$ and $\frac{\partial \hat{q}(t)}{\partial \mu_{2}}<0$, which implies that the equilibrium reinsurance strategy $\hat{q}(t)$ shows a decreasing trend in relation to the risk-free interest rate $r$ and the second-order moment of the claim sizes $\mu_{2}$, exhibits an increasing tendency regarding the safety loading of the reinsurer $\eta_{2}$ and the first-order moment of the claim sizes $\mu_{1}$, and remains unchanged with regard to the intensity of the number of claims $\lambda_{1}$. Then, we give some numerical illustrations in Figures \ref{fig7}-\ref{fig11}.

\begin{figure}
    \centering
    \begin{minipage}{5cm}
      \includegraphics[width=5cm]{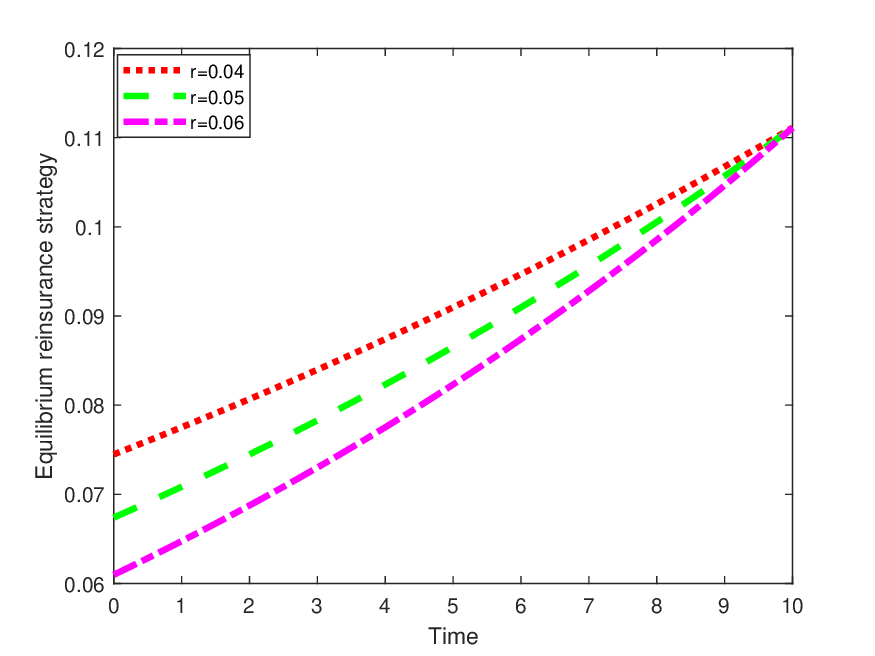}
      (a) Case (I) when $T=10$
    \end{minipage}
     \begin{minipage}{5cm}
      \includegraphics[width=5cm]{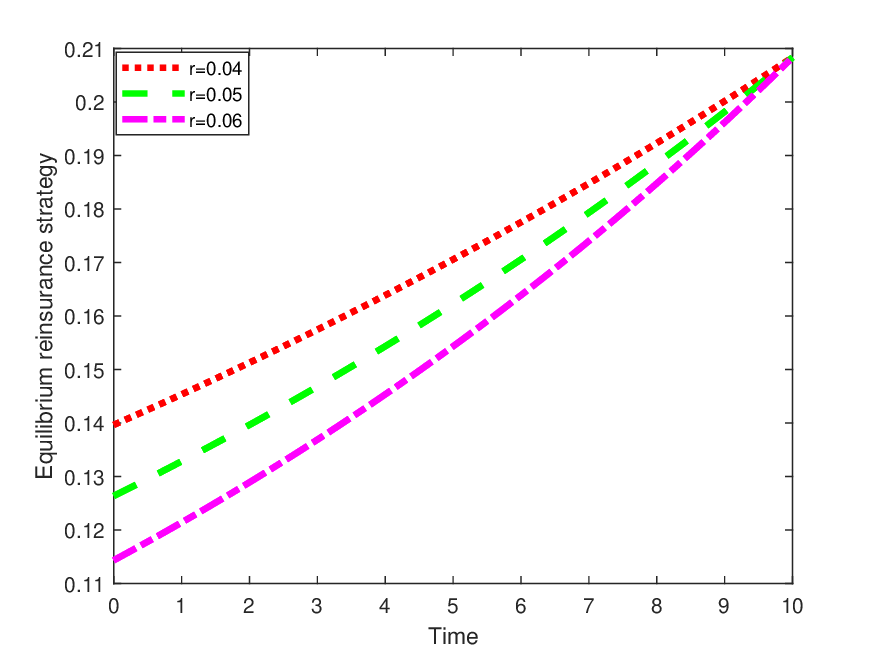}
      (b) Case (II) when $T=10$
    \end{minipage}

    \begin{minipage}{5cm}
      \includegraphics[width=5cm]{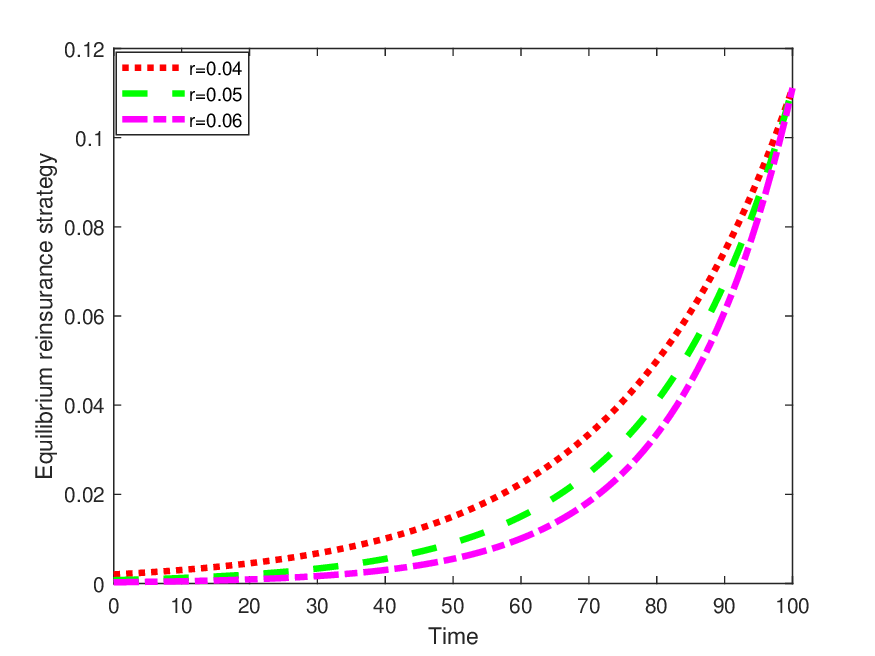}
      (c) Case (I) when $T=100$
    \end{minipage}
    \begin{minipage}{5cm}
      \includegraphics[width=5cm]{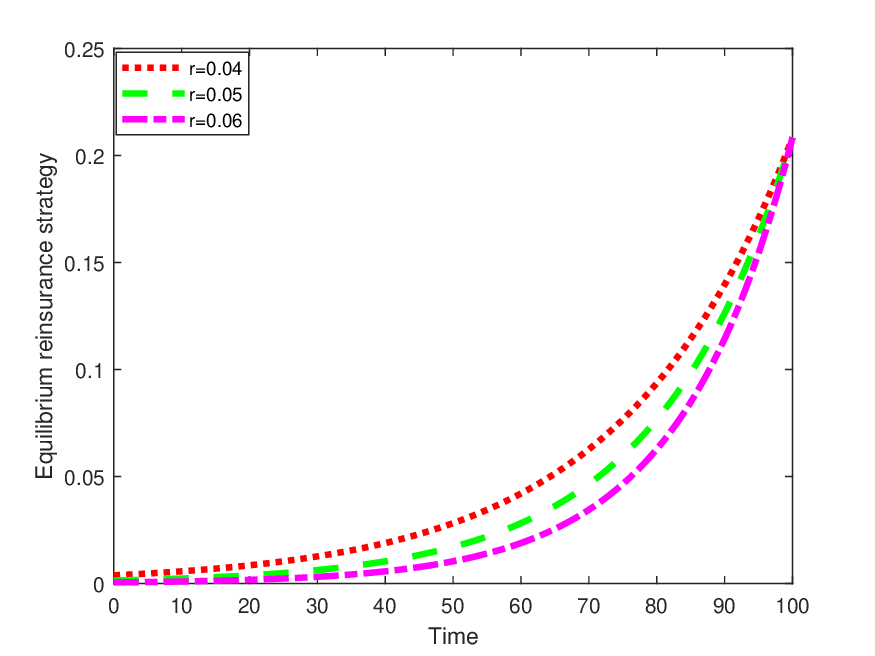}
      \caption*{(d) Case (II) when $T=100$}
    \end{minipage}
    \caption{The impacts of $r$ on the equilibrium reinsurance strategy.}
\label{fig7}

\end{figure}

\begin{figure}
    \centering
    \begin{minipage}{5cm}
      \includegraphics[width=5cm]{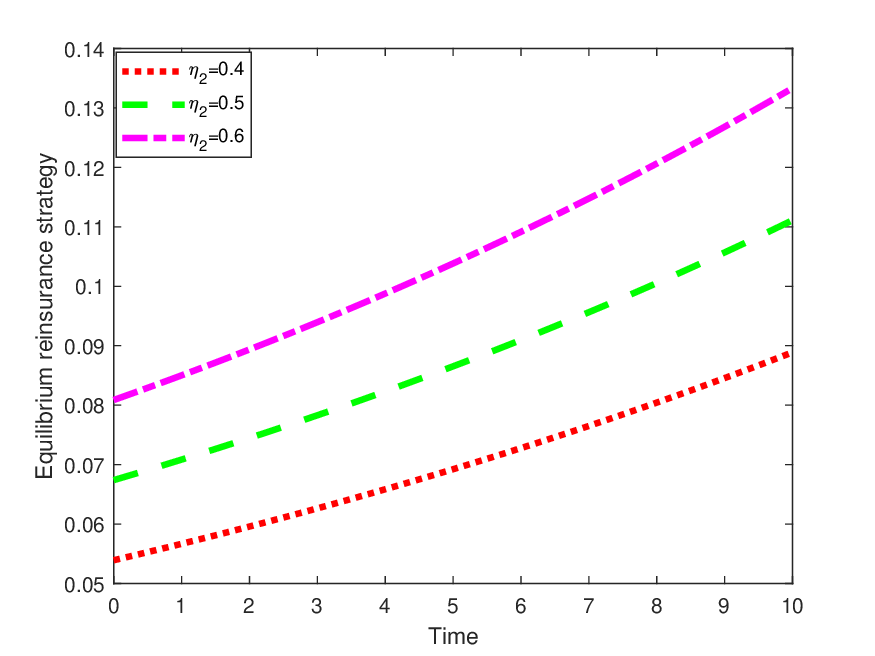}
      (a) Case (I) when $T=10$
    \end{minipage}
     \begin{minipage}{5cm}
      \includegraphics[width=5cm]{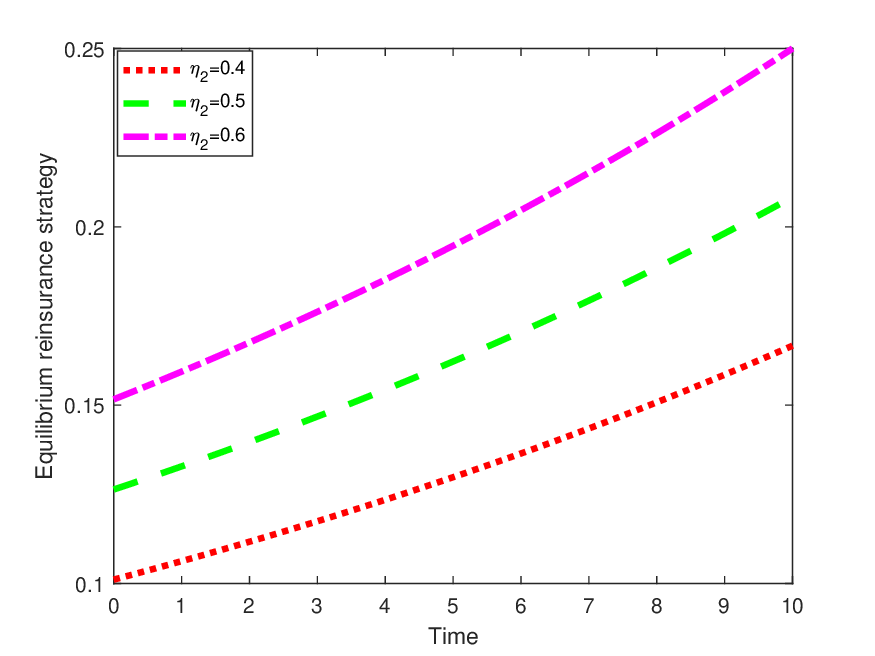}
      (b) Case (II) when $T=10$
    \end{minipage}

    \begin{minipage}{5cm}
      \includegraphics[width=5cm]{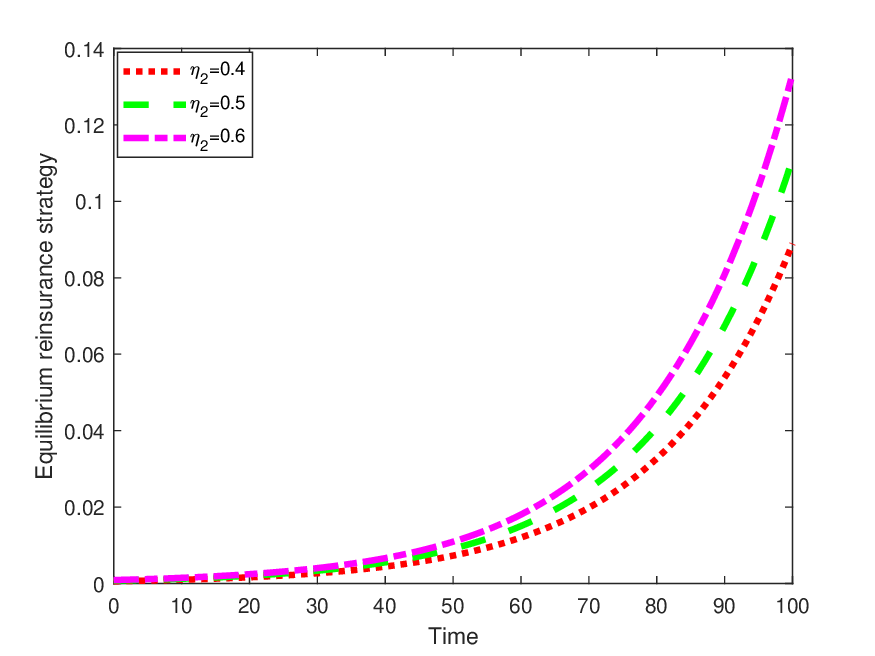}
      (c) Case (I) when $T=100$
    \end{minipage}
    \begin{minipage}{5cm}
      \includegraphics[width=5cm]{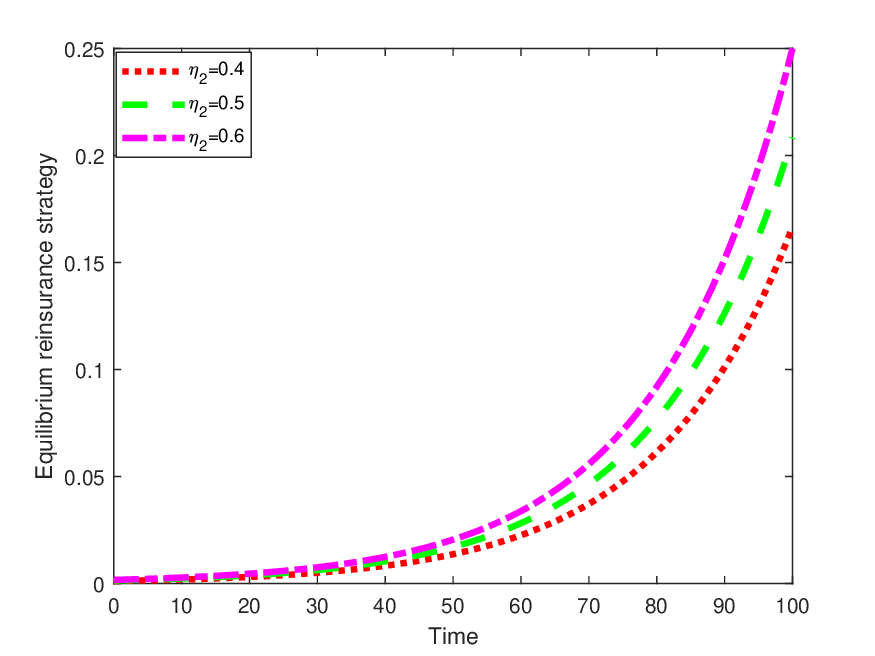}
      \caption*{(d) Case (II) when $T=100$}
    \end{minipage}
    \caption{The impacts of $\eta_{2}$ on the equilibrium reinsurance strategy.}
\label{fig8}

\end{figure}

\begin{figure}
    \centering
    \begin{minipage}{5cm}
      \includegraphics[width=5cm]{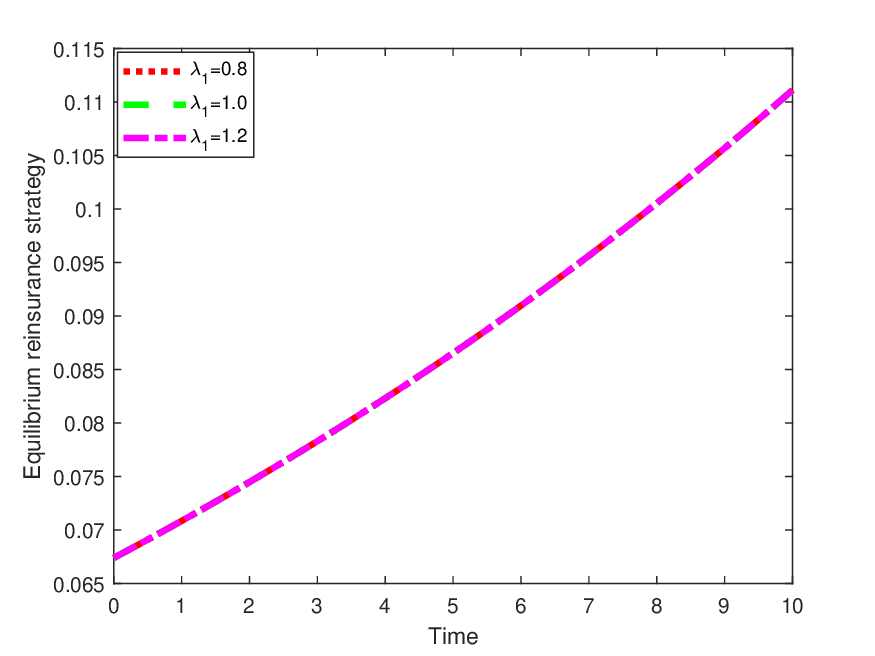}
      (a) Case (I) when $T=10$
    \end{minipage}
     \begin{minipage}{5cm}
      \includegraphics[width=5cm]{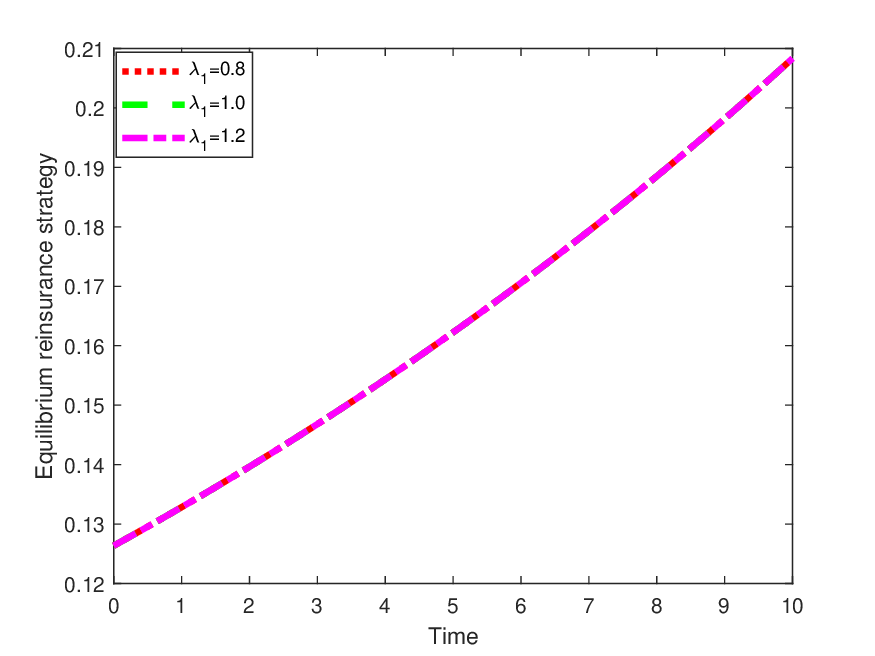}
      (b) Case (II) when $T=10$
    \end{minipage}

    \begin{minipage}{5cm}
      \includegraphics[width=5cm]{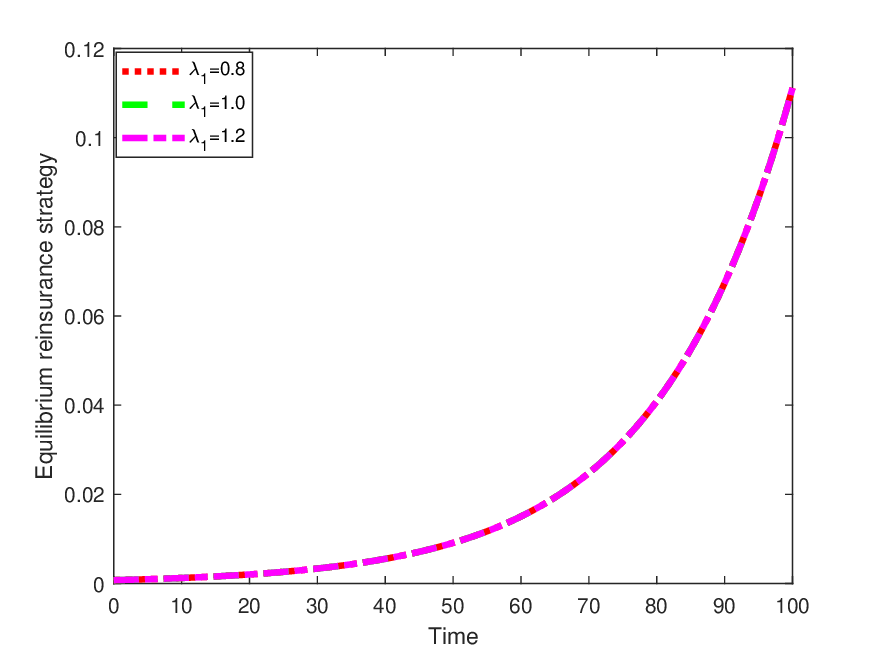}
      (c) Case (I) when $T=100$
    \end{minipage}
    \begin{minipage}{5cm}
      \includegraphics[width=5cm]{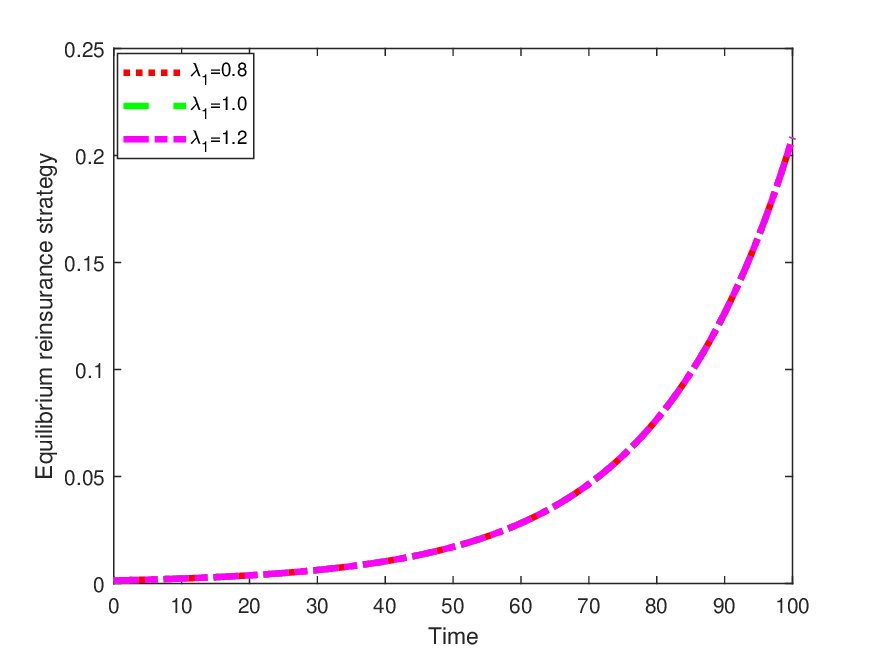}
      \caption*{(d) Case (II) when $T=100$}
    \end{minipage}
    \caption{The impacts of $\lambda_{1}$ on the equilibrium reinsurance strategy.}
\label{fig9}

\end{figure}

\begin{figure}
    \centering
    \begin{minipage}{5cm}
      \includegraphics[width=5cm]{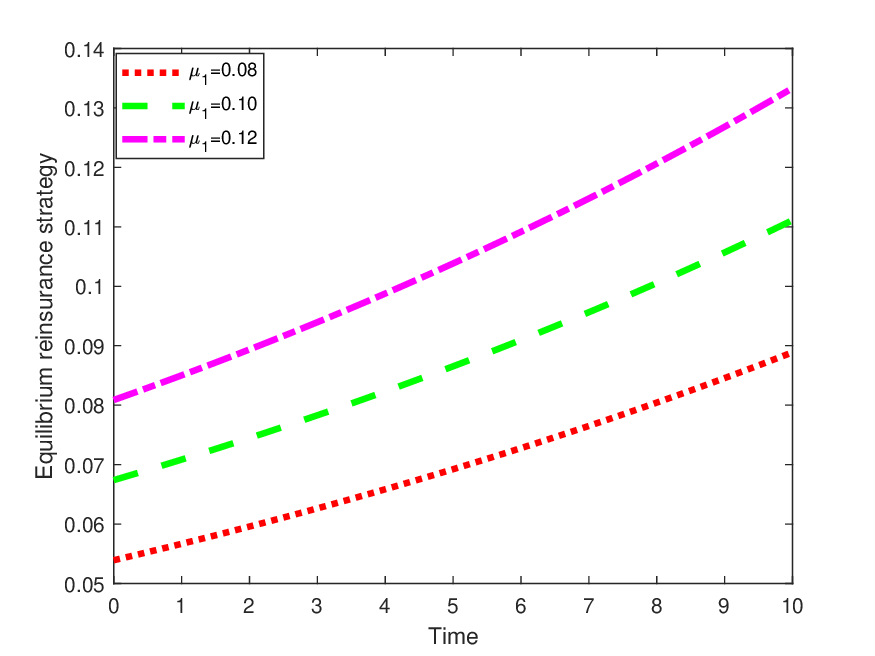}
      (a) Case (I) when $T=10$
    \end{minipage}
     \begin{minipage}{5cm}
      \includegraphics[width=5cm]{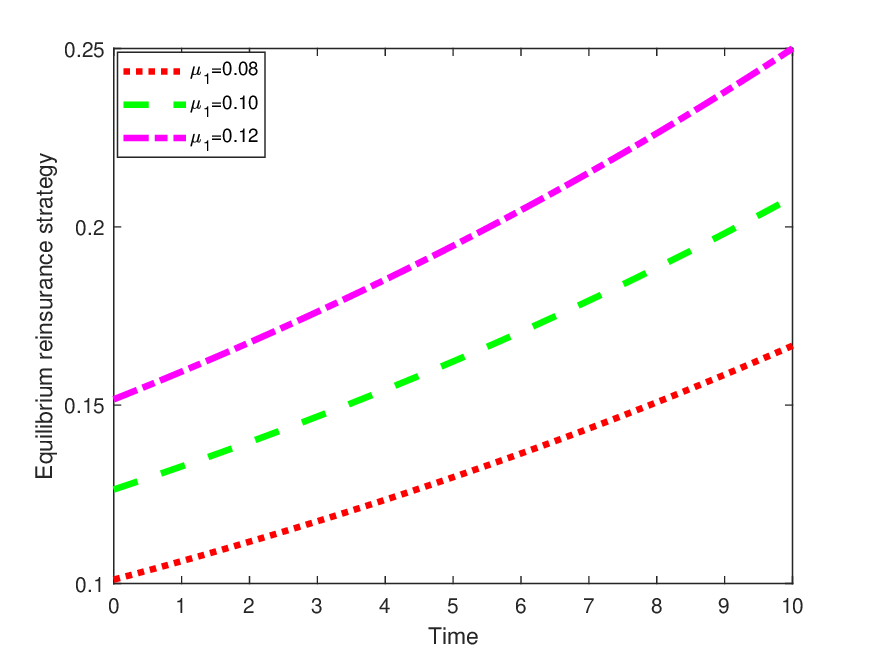}
      (b) Case (II) when $T=10$
    \end{minipage}

    \begin{minipage}{5cm}
      \includegraphics[width=5cm]{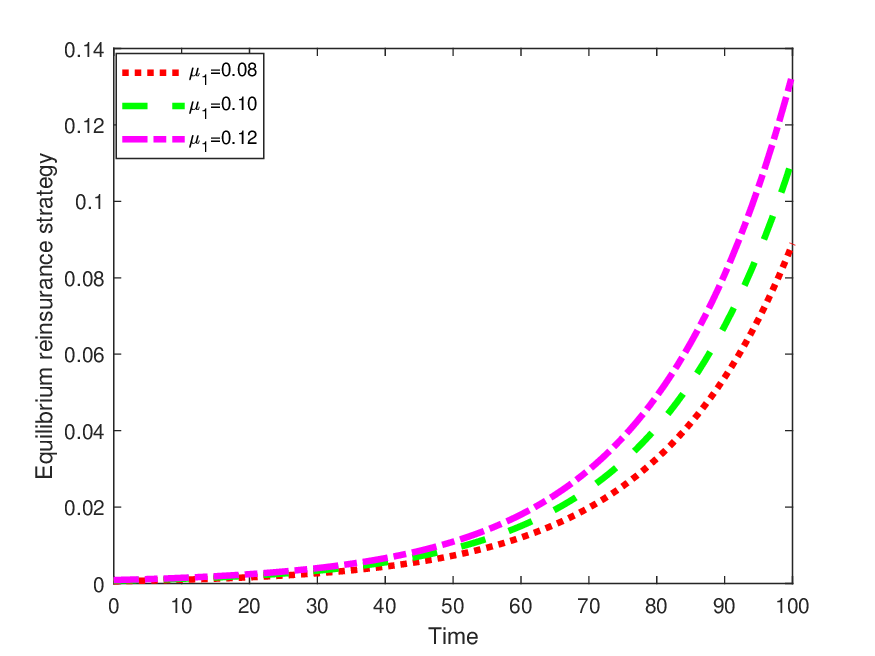}
      (c) Case (I) when $T=100$
    \end{minipage}
    \begin{minipage}{5cm}
      \includegraphics[width=5cm]{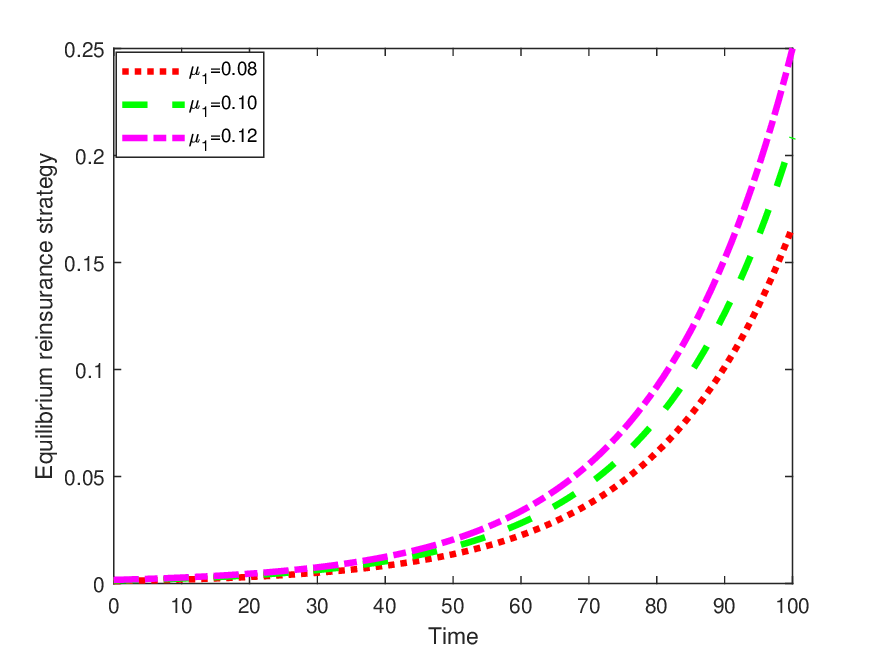}
      \caption*{(d) Case (II) when $T=100$}
    \end{minipage}
    \caption{The impacts of $\mu_{1}$ on the equilibrium reinsurance strategy.}
\label{fig10}

\end{figure}

\begin{figure}
    \centering
    \begin{minipage}{5cm}
      \includegraphics[width=5cm]{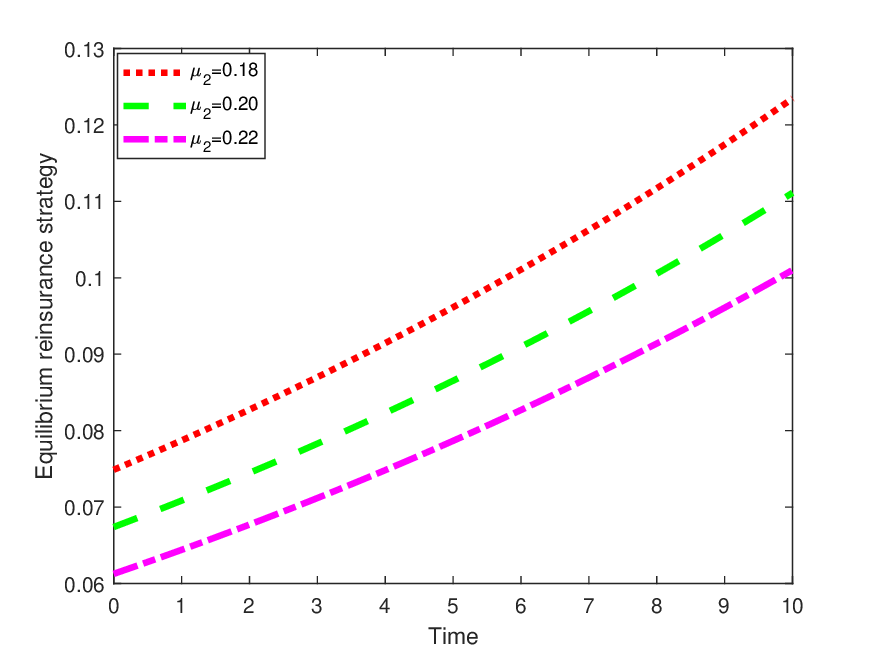}
      (a) Case (I) when $T=10$
    \end{minipage}
     \begin{minipage}{5cm}
      \includegraphics[width=5cm]{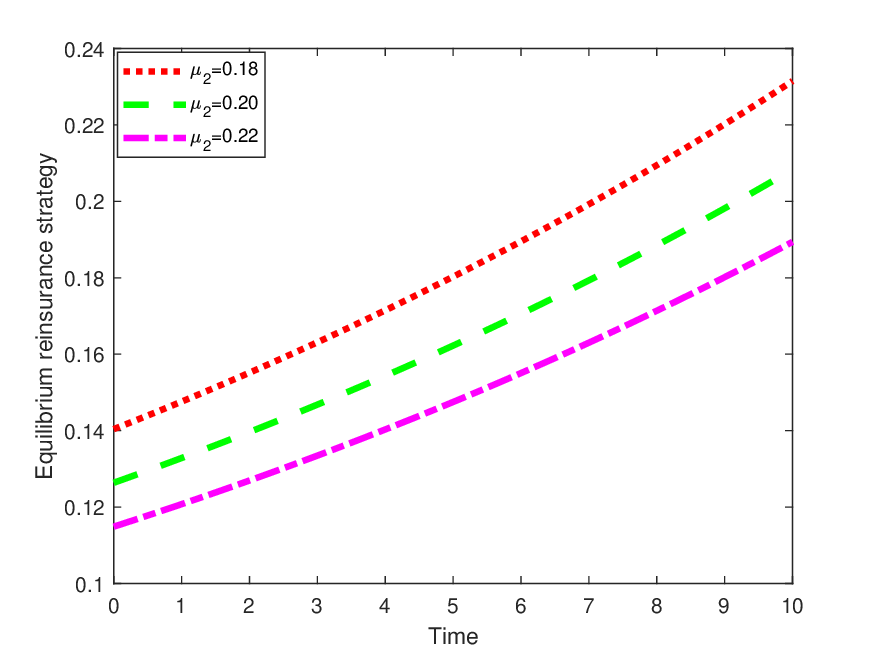}
      (b) Case (II) when $T=10$
    \end{minipage}

    \begin{minipage}{5cm}
      \includegraphics[width=5cm]{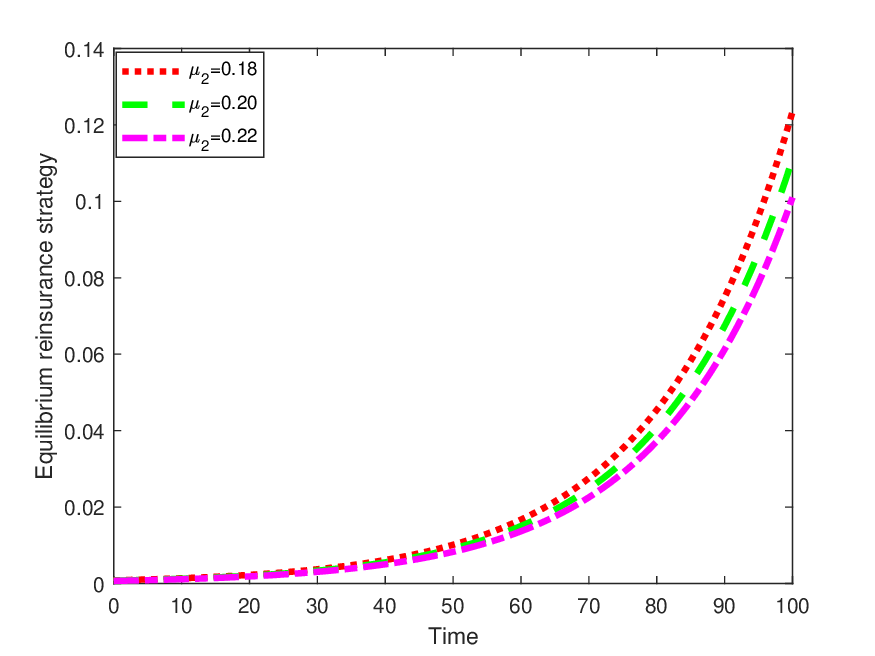}
      (c) Case (I) when $T=100$
    \end{minipage}
    \begin{minipage}{5cm}
      \includegraphics[width=5cm]{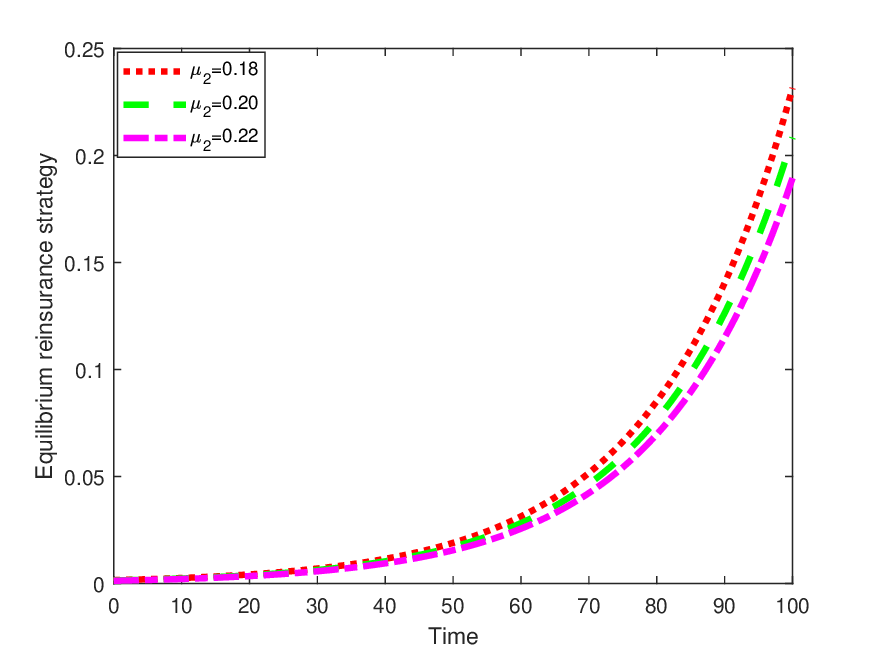}
      \caption*{(d) Case (II) when $T=100$}
    \end{minipage}
    \caption{The impacts of $\mu_{2}$ on the equilibrium reinsurance strategy.}
\label{fig11}

\end{figure}

From Figure \ref{fig7}, we can see that $0<\hat{q}(t)<1$ and the equilibrium reinsurance strategy $\hat{q}(t)$ decreases concerning $r$, which indicates that the insurer should obtain a greater amount of reinsurance if $r$ becomes larger. Moreover, Figure \ref{fig7} shows that the insurer having a greater expected value of risk aversion will acquire additional reinsurance, which is also reflected in the following Figures \ref{fig8}-\ref{fig11}. In addition, it is interesting to see from Figure \ref{fig7} that as the time $t$ becomes greater, the equilibrium reinsurance strategy $\hat{q}(t)$ also becomes larger. However, this indicates that the insurer will reduce the amount of reinsurance purchased when $t$ rises. This phenomenon also exists in the following Figures \ref{fig8}-\ref{fig11}.

Figure \ref{fig8} characterizes the impacts of the safety loading of the reinsurer $\eta_{2}$ upon the equilibrium reinsurance strategy. To be more specific, the equilibrium reinsurance strategy grows in relation to $\eta_{2}$. This conforms to our intuition. Since a greater $\eta_{2}$ signifies an elevated reinsurance expense, the insurer prefers to keep more risks in-house. Therefore, the insurer will opt for less reinsurance

Figure \ref{fig9} demonstrates the impacts of the intensity of the number of claims $\lambda_{1}$ upon the equilibrium reinsurance strategy. From Figure \ref{fig9}, we discover that the equilibrium reinsurance strategy remains unchanged regardless of the values of $\lambda_{1}$.

Figure \ref{fig10} illustrates the impacts of the first-order moment of the claim sizes $\mu_{1}$ upon the equilibrium reinsurance strategy, which shows a positive relationship between $\hat{q}(t)$ and $\mu_{1}$. Since the growth of $\mu_{1}$, which implies a greater premium for the insurer to remit to the reinsurer, prompts the insurer to hold onto more risks and consequently cut down on the amount of reinsurance purchased.

By observing Figure \ref{fig11}, a negative correlation between $\hat{q}(t)$ and $\mu_{2}$ can be discerned. This implies that the insurer will procure more reinsurance when $\mu_{2}$ rises. If there is an increase in $\mu_{2}$, then the insurer will face greater uncertainties. Consequently, the insurer tends to buy additional reinsurance to lower the risks.

\section{Conclusions}
This paper focused on the study of the reinsurance and investment problem regarding the insurer with random risk aversion under Heston's SV model. The insurer's surplus process is supposed to satisfy a diffusion approximation model, and the insurer has the option to transfer a portion of the risks to a reinsurer via proportional reinsurance or to expand by acquiring new business. Furthermore, the insurer can invest in the financial market which comprises one risk-free asset and one risky asset with its price depicted by Heston's SV model. Firstly, we formulated the general reinsurance and investment problem via expected certainty equivalents and provided the verification theorem within the game theoretic framework. Secondly, we solved the general reinsurance and investment problem. Within the exponential utility, we derived semi-analytic expressions for the equilibrium reinsurance and investment strategies and the value function under the cases of $n$ possible risk aversions and one risk aversion, respectively. Finally, we provided some numerical simulations to present the influences of model parameters. The principal discoveries are presented below: (i) the value function is properly defined under some mild assumptions, and the equilibrium reinsurance-investment strategies are deterministic and state-independent; (ii) the parameters related to the risky asset have no bearing on the equilibrium reinsurance strategy, while the parameters of the insurance market bear no relation to the equilibrium investment strategy; (iii) the equilibrium reinsurance and investment strategies are affected by the random risk aversion, and the insurer having a greater expected value of risk aversion is inclined to purchase a larger amount of reinsurance and make a smaller investment in the risky asset; (iv) the equilibrium reinsurance strategies are typically increasing over time while the equilibrium investment strategies are representatively declining with time, which backs up the idea of diminishing life-cycle investment profiles for the exponential utility.

There are some interesting directions that deserve further investigation. One is to adopt other SV models to capture the financial market such as constant elasticity of variance models \cite{Li2014}, and to consider other varieties of reinsurance like excess-of-loss reinsurance \cite{Bai2010}. Moreover, many researchers pointed out that due to the occurrence of the financial crisis in 2008, the investor/insurer should attach more importance to default risk management \cite{Deng2018}. In addition, for an insurer, an optimal reinsurance contract might not be so for a reinsurer because the reinsurer is also bent on achieving its own profit and controlling risks. Consequently, the problem of optimal reinsurance and investment between the two parties has captured the interest of some scholars in the context of game theory \cite{Deng2018, Li2014}. Therefore, another direction is to consider random risk aversion and default risk for the reinsurance-investment problems and the reinsurance-investment games. We hope to discuss these problems in future research.

\section*{Appendices}
\appendix
\renewcommand{\appendixname}{Appendix~\Alph{section}}
\section{Proof of Theorem \ref{theorem3.1}}
\begin{proof}
This proof mainly utilizes the proof approaches of Theorem 15.1 in \cite{Bjork2021} and Theorem 3.5 in \cite{Desmettre2023}. For the sake of the paper's integrity, we provide the subsequent details that are comprised of three steps.

Step 1. As by assumption for all $\gamma$, $U, Y^{\gamma}, H\in C^{1,2,2}(\mathcal{Q})$, we can obtain the continuity of $\hat{u}$ and then derive the continuity of the drift and diffusion coefficients of $X^{\hat{u}}$. Moreover, since $Y^{\gamma}$ satisfies the condition (A) for all $\gamma$, then it follows from the Feynman-Kac therorem \cite{Yong1999} and the equation \eqref{eq16} that
\begin{equation}\label{eq19}
Y^{\gamma}(t,x,v)=\mathbb{E}_{t}\left[\varphi^{\gamma}(X^{\hat{u}}(T))\right]= y^{\hat{u},\gamma}(t,x,v)\; \text{for all}\;\gamma.
\end{equation}

Step 2. Substituting $u=\hat{u}$ into the pseudo HJB \eqref{eq15} yields
\begin{align*}
\mathcal{A}^{\hat{u}}U(t,x,v)-\mathcal{A}^{\hat{u}}H(t,x,v)
+\int\iota^{\gamma}(Y^{\gamma}(t,x,v))\mathcal{A}^{\hat{u}}Y^{\gamma}(t,x,v)\mathrm{d}\Gamma(\gamma)=0.
\end{align*}
Plugging in the equation \eqref{eq16} then reads
\begin{align}\label{eq20}
\mathcal{A}^{\hat{u}}U(t,x,v)-\mathcal{A}^{\hat{u}}H(t,x,v)=0.
\end{align}
For $U\in C^{1,2,2}(\mathcal{Q})$ satisfying the condition (A), by using the It\^{o} formula, we can derive that
\begin{align}\label{eq21}
\mathbb{E}_{t}\left[U(T,X^{\hat{u}}(T),V(T))\right]=&U(t,x,v)+\mathbb{E}_{t}\left[\int^{T}_{t}\bigg(U_{s}(s,X^{\hat{u}}(s),V(s))+(rX^{\hat{u}}(s)+a\eta+a\eta_{2}q(s)+\xi V(s)\pi(s))\right.\nonumber\\
&\times U_{x}(s,X^{\hat{u}}(s),V(s))+0.5(b^{2}q^{2}(s)+\pi^{2}(s)V(s))U_{xx}(s,X^{\hat{u}}(s),V(s))\nonumber\\
&+\kappa(\theta-V(s))U_{v}(s,X^{\hat{u}}(s),V(s))+0.5\sigma^{2}V(s)U_{vv}(s,X^{\hat{u}}(s),V(s))\nonumber\\
&+\rho\pi(s)\sigma V(s)U_{xv}(s,X^{\hat{u}}(s),V(s))\bigg)\mathrm{d}s\bigg].
\end{align}
Analogously, for $H\in C^{1,2,2}(\mathcal{Q})$ meeting the condition (A), we have
\begin{align}\label{eq22}
\mathbb{E}_{t}\left[H(T,X^{\hat{u}}(T),V(T))\right]=&H(t,x,v)+\mathbb{E}_{t}\left[\int^{T}_{t}\bigg(H_{s}(s,X^{\hat{u}}(s),V(s))+(rX^{\hat{u}}(s)+a\eta+a\eta_{2}q(s)+\xi V(s)\pi(s))\right.\nonumber\\
&\times H_{x}(s,X^{\hat{u}}(s),V(s))+0.5(b^{2}q^{2}(s)+\pi^{2}(s)V(s))H_{xx}(s,X^{\hat{u}}(s),V(s))\nonumber\\
&+\kappa(\theta-V(s))H_{v}(s,X^{\hat{u}}(s),V(s))+0.5\sigma^{2}V(s)H_{vv}(s,X^{\hat{u}}(s),V(s))\nonumber\\
&+\rho\pi(s)\sigma V(s)H_{xv}(s,X^{\hat{u}}(s),V(s))\bigg)\mathrm{d}s\bigg].
\end{align}
Note that equations \eqref{eq21} and \eqref{eq22} hold in general for $U, H\in C^{1,2,2}(\mathcal{Q})$ satisfying the condition (A) and rely on neither the equation \eqref{eq15} nor the equation \eqref{eq16}. Since $U(T,x,v)=H(T,x,v)$, it follows from equations \eqref{eq12}, \eqref{eq17} and \eqref{eq19}-\eqref{eq22} that
\begin{align}\label{eq23}
U(t,x,v)=H(t,x,v)=\int(\varphi^{\gamma})^{-1}\left(Y^{\gamma}(t,x,v)\right)\mathrm{d}\Gamma(\gamma)
=\int(\varphi^{\gamma})^{-1}\left(y^{\hat{u},\gamma}(t,x,v)\right)\mathrm{d}\Gamma(\gamma)=J^{\hat{u}}(t,x,v).
\end{align}

Step 3. Similar to Lemma 3.8 of \cite{Bjork2014}, one can obtain that
\begin{align*}
J^{u_{h}}(t,x,v)=&\mathbb{E}_{t}\left[J^{u_{h}}(t+h,X^{u_{h}}(t+h),V(t+h))\right]
-\mathbb{E}_{t}\left[\int(\varphi^{\gamma})^{-1}\left(y^{u_{h},\gamma}(t+h,X^{u_{h}}(t+h),V(t+h))\right)\mathrm{d}\Gamma(\gamma)\right]\\
&+\int(\varphi^{\gamma})^{-1}\left(\mathbb{E}_{t}\left[y^{u_{h},\gamma}(t+h,X^{u_{h}}(t+h),V(t+h))\right]\right)\mathrm{d}\Gamma(\gamma).
\end{align*}
Since $u_{h}=u$ on $[t,t+h)$, one has by continuity of $X^{u_{h}}(s)$ for $s\leq t$ that $X^{u_{h}}(t+h)=X^{u}(t+h)$. Moreover, since $u_{h}=\hat{u}$ on $[t+h, T]$, one also has by \eqref{eq23} that
$$
J^{u_{h}}(t+h,X^{u_{h}}(t+h),V(t+h))=U(t+h,X^{u}(t+h),V(t+h))
$$
and
$$
y^{u_{h},\gamma}(t+h,X^{u_{h}}(t+h),V(t+h))=y^{\hat{u},\gamma}(t+h,X^{u}(t+h),V(t+h)).
$$
Therefore, we can derive that
\begin{align}\label{eq29}
J^{u_{h}}(t,x,v)=&\mathbb{E}_{t}\left[U(t+h,X^{u}(t+h),V(t+h))\right]
-\mathbb{E}_{t}\left[\int(\varphi^{\gamma})^{-1}\left(y^{\hat{u},\gamma}(t+h,X^{u}(t+h),V(t+h))\right)\mathrm{d}\Gamma(\gamma)\right]\nonumber\\
&+\int(\varphi^{\gamma})^{-1}\left(\mathbb{E}_{t}\left[y^{\hat{u},\gamma}(t+h,X^{u}(t+h),V(t+h))\right]\right)\mathrm{d}\Gamma(\gamma).
\end{align}
In addition, by the pseudo HJB \eqref{eq15} and the equation \eqref{eq19}, we have
\begin{align}\label{eq24}
\mathcal{A}^{u}U(t,x,v)-\mathcal{A}^{u}H(t,x,v)
+\int\iota^{\gamma}(y^{\hat{u},\gamma}(t,x,v))\mathcal{A}^{u}y^{\hat{u},\gamma}(t,x,v)\mathrm{d}\Gamma(\gamma)\leq0.
\end{align}
Since the admissible reinsurance and investment strategy $u(t,x,v)$ is continuous, by first applying the It\^{o} formula for a fixed $h>0$ and then taking the limit $h\rightarrow0$, we can deduce the following identities
\begin{align}\label{eq25}
&\mathbb{E}_{t}\left[U(t+h,X^{u}(t+h),V(t+h))\right]-U(t,x,v)\nonumber\\
=&h\left\{U_{t}(t,x,v)+(rx+a\eta+a\eta_{2}q(t,x,v)+\xi v\pi(t,x,v))U_{x}(t,x,v)+0.5(b^{2}q^{2}(t,x,v)+\pi^{2}(t,x,v)v)U_{xx}(t,x,v)\right.\nonumber\\
&\left.\quad+\kappa(\theta-v)U_{v}(t,x,v)+0.5\sigma^{2}vU_{vv}(t,x,v)+\rho\pi(t,x,v)\sigma vU_{xv}(t,x,v)\right\}+o(h)
\end{align}
and
\begin{align}\label{eq26}
&\mathbb{E}_{t}\left[H(t+h,X^{u}(t+h),V(t+h))\right]-H(t,x,v)\nonumber\\
=&h\left\{H_{t}(t,x,v)+(rx+a\eta+a\eta_{2}q(t,x,v)+\xi v\pi(t,x,v))H_{x}(t,x,v)+0.5(b^{2}q^{2}(t,x,v)+\pi^{2}(t,x,v)v)H_{xx}(t,x,v)\right.\nonumber\\
&\left.\quad+\kappa(\theta-v)H_{v}(t,x,v)+0.5\sigma^{2}vH_{vv}(t,x,v)+\rho\pi(t,x,v)\sigma vH_{xv}(t,x,v)\right\}+o(h)
\end{align}
as well as
\begin{align}\label{eq27}
&\mathbb{E}_{t}\left[y^{\hat{u},\gamma}(t+h,X^{u}(t+h),V(t+h))\right]-y^{\hat{u},\gamma}(t,x,v)\nonumber\\
=&h\left\{y^{\hat{u},\gamma}_{t}(t,x,v)+(rx+a\eta+a\eta_{2}q(t,x,v)+\xi v\pi(t,x,v))y^{\hat{u},\gamma}_{x}(t,x,v)+0.5(b^{2}q^{2}(t,x,v)+\pi^{2}(t,x,v)v)y^{\hat{u},\gamma}_{xx}(t,x,v)\right.\nonumber\\
&\left.\quad+\kappa(\theta-v)y^{\hat{u},\gamma}_{v}(t,x,v)+0.5\sigma^{2}vy^{\hat{u},\gamma}_{vv}(t,x,v)+\rho\pi(t,x,v)\sigma vy^{\hat{u},\gamma}_{xv}(t,x,v)\right\}+o(h),
\end{align}
where $o(h)$ denotes a higher-order infinitesimal. Furthermore, by the equation \eqref{eq27}, we can get that
\begin{align}\label{eq28}
&\int(\varphi^{\gamma})^{-1}\left(\mathbb{E}_{t}\left[y^{\hat{u},\gamma}(t+h,X^{u}(t+h),V(t+h))\right]\right)\mathrm{d}\Gamma(\gamma)
-\int(\varphi^{\gamma})^{-1}\left(y^{\hat{u},\gamma}(t,x,v)\right)\mathrm{d}\Gamma(\gamma)
\nonumber\\
=&h\left\{\int\iota^{\gamma}(y^{\hat{u},\gamma}(t,x,v))\bigg(y^{\hat{u},\gamma}_{t}(t,x,v)+(rx+a\eta+a\eta_{2}q(t,x,v)+\xi v\pi(t,x,v))y^{\hat{u},\gamma}_{x}(t,x,v)\right.\nonumber\\
&\quad+0.5(b^{2}q^{2}(t,x,v)+\pi^{2}(t,x,v)v)y^{\hat{u},\gamma}_{xx}(t,x,v)+\kappa(\theta-v)y^{\hat{u},\gamma}_{v}(t,x,v)\nonumber\\
&\quad+0.5\sigma^{2}vy^{\hat{u},\gamma}_{vv}(t,x,v)+\rho\pi(t,x,v)\sigma vy^{\hat{u},\gamma}_{xv}(t,x,v)\bigg)\mathrm{d}\Gamma(\gamma)\bigg\}+o(h).
\end{align}
Combining equations \eqref{eq25}, \eqref{eq26} and \eqref{eq28} with the equation \eqref{eq24} yields
\begin{align*}
\mathbb{E}_{t}&\left[U(t+h,X^{u}(t+h),V(t+h))\right]-U(t,x,v)-\mathbb{E}_{t}\left[H(t+h,X^{u}(t+h),V(t+h))\right]+H(t,x,v)\\
&+\int(\varphi^{\gamma})^{-1}\left(\mathbb{E}_{t}\left[y^{\hat{u},\gamma}(t+h,X^{u}(t+h),V(t+h))\right]\right)\mathrm{d}\Gamma(\gamma)
-\int(\varphi^{\gamma})^{-1}\left(y^{\hat{u},\gamma}(t,x,v)\right)\mathrm{d}\Gamma(\gamma)\leq o(h).
\end{align*}
Then, it follows from equations \eqref{eq17}, \eqref{eq23} and \eqref{eq29} that
\begin{align*}
U(t,x,v)\geq&\mathbb{E}_{t}\left[U(t+h,X^{u}(t+h),V(t+h))\right]-\mathbb{E}_{t}\left[H(t+h,X^{u}(t+h),V(t+h))\right]\\
&+\int(\varphi^{\gamma})^{-1}\left(\mathbb{E}_{t}\left[y^{\hat{u},\gamma}(t+h,X^{u}(t+h),V(t+h))\right]\right)\mathrm{d}\Gamma(\gamma)
+o(h)\\
=&J^{u_{h}}(t,x,v)+o(h)
\end{align*}
and so
$$
J^{\hat{u}}(t,x,v)\geq J^{u_{h}}(t,x,v)+o(h).
$$
Therefore,
$$
\underset{h\rightarrow0}\liminf\frac{J^{\hat{u}}(t,x,v)-J^{u_{h}}(t,x,v)}{h}\geq0,
$$
which means that $\hat{u}$ is an equilibrium reinsurance and investment strategy. Thus, we have $V(t,x,v)=U(t,x,v)$, which finalizes the proof.
\end{proof}

\section{Proof of Proposition \ref{proposition4.1}}
\begin{proof}
For $\gamma_{i}>0\;(i=1,\cdot\cdot\cdot,n)$, we have  $Y^{\gamma_{i}}(t,x,v)=-\frac{1}{\gamma_{i}}e^{g_{1}^{\gamma_{i}}(t)x+g_{2}^{\gamma_{i}}(t)v+g_{3}^{\gamma_{i}}(t)}$. Note that for $\gamma_{i}$ with $i=1,\cdot\cdot\cdot,n$, $g_{1}^{\gamma_{i}}(t)=-\gamma_{i} e^{r(T-t)}$ and $g_{2}^{\gamma_{i}}(t)$ and $g_{3}^{\gamma_{i}}(t)$ have continuous and non-explosive solutions. Thus, one has $Y^{\gamma_{i}}\in C^{1,2,2}(\mathcal{Q})$. Moreover, it follows from \eqref{eq43} that the candidate equilibrium reinsurance and investment strategy $\hat{q}(t)$ and $\hat{\pi}(t)$ are deterministic, continuous and state-independent and $\hat{q}(t)>0$ for $\forall \;t\in[0,T]$. Then, the condition (ii) in Definition \ref{definition3.1} is met. Substituting the equation \eqref{eq43} into the equation \eqref{eq6} yields
\begin{align}\label{eq49}
    X^{\hat{u}}(t)=&x_{0}e^{rt}+\int^{t}_{0}e^{r(t-s)}\left[a\eta+a\eta_{2}\hat{q}(s)+\xi V(s)\hat{\pi}(s)\right]\mathrm{d}s\nonumber\\
   &+\int^{t}_{0}e^{r(t-s)}\left[b\hat{q}(s)\mathrm{d}W_{0}(s)+\hat{\pi}(s)\sqrt{V(s)}\mathrm{d}W_{1}(s)\right],
\end{align}
which means that the condition (i) in Definition \ref{definition3.1} is satisfied. By the proof process of Lemma 1 in \cite{Li2012}, we know that $\forall \varrho\in[1,\infty)$,
\begin{equation}\label{eq51}
\mathbb{E}_{t}\left[\sup_{s\in[t,T]}|V(s)|^{\varrho}\right]<\infty.
\end{equation}
Applying the Burkh\"{o}lder-Davis-Gundy inequality, we have for some constants $K>0$ that
\begin{align}\label{eq52}
\mathbb{E}\left[\sup_{t\in[0,T]}|X^{\hat{u}}(t)|^{2}\right]\leq&K\mathbb{E}\left[\sup_{t\in[0,T]}\left|\int^{t}_{0}\left(a\eta+a\eta_{2}\hat{q}(s)+\xi V(s)\hat{\pi}(s)\right)\mathrm{d}s\right|^{2}\right.\nonumber\\
   &+\sup_{t\in[0,T]}\left|\int^{t}_{0}b\hat{q}(s)\mathrm{d}W_{0}(s)\right|^{2}
+\left.\sup_{t\in[0,T]}\left|\int^{t}_{0}\hat{\pi}(s)\sqrt{V(s)}\mathrm{d}W_{1}(s)\right|^{2}\right]\nonumber\\
\leq&K\mathbb{E}\left[\sup_{t\in[0,T]}\left|\int^{t}_{0}\left(a\eta+a\eta_{2}\hat{q}(s)\right)\mathrm{d}s\right|^{2}
+\sup_{t\in[0,T]}\left|V(t)\right|^{2}\times\sup_{t\in[0,T]}\left|\int^{t}_{0}\xi \hat{\pi}(s)\mathrm{d}s\right|^{2}\right.\nonumber\\
   &+\left.\int^{T}_{0}b^{2}\hat{q}^{2}(s)\mathrm{d}s
+\int^{T}_{0}\hat{\pi}^{2}(s)V(s)\mathrm{d}s\right]
\end{align}
with
\begin{align}\label{eq53}
\mathbb{E}\left[\int^{T}_{0}\hat{\pi}^{2}(s)V(s)\mathrm{d}s\right]\leq&
\mathbb{E}\left[\sup_{t\in[0,T]}\left|V(t)\right|\times\int^{T}_{0}\hat{\pi}^{2}(s)\mathrm{d}s\right].
\end{align}
By the equation \eqref{eq51} and the boundness of $\hat{q}(t)$ and $\hat{\pi}(t)$, one can obtain that
\begin{equation}\label{eq54}
\mathbb{E}\left[\sup_{t\in[0,T]}|X^{\hat{u}}(t)|^{2}\right]<\infty
\end{equation}
and then
\begin{align}\label{eq63}
\mathbb{E}\left[\sup_{t\in[0,T]}|X^{\hat{u}}(t)|\right]\leq\left(\mathbb{E}\left[\left(\sup_{t\in[0,T]}|X^{\hat{u}}(t)|\right)^{2}\right]\right)^{\frac{1}{2}}
\leq\left(\mathbb{E}\left[\sup_{t\in[0,T]}|X^{\hat{u}}(t)|^{2}\right]\right)^{\frac{1}{2}}<\infty.
\end{align}

Inserting the equation \eqref{eq49} into $Y^{\gamma_{i}}(t,x,v)$, we derive the following estimate
\begin{align}\label{eq55}
\left|Y^{\gamma_{i}}(t,x,v)\right|^{4}=&\left|\frac{1}{\gamma_{i}^{4}}e^{4g_{1}^{\gamma_{i}}(t)x+4g_{2}^{\gamma_{i}}(t)v+4g_{3}^{\gamma_{i}}(t)}\right|\nonumber\\
\leq&Ke^{4g_{1}^{\gamma_{i}}(t)X^{\hat{u}}(t)}\nonumber\\
=&Ke^{4g_{1}^{\gamma_{i}}(t)\left(x_{0}e^{rt}+\int^{t}_{0}e^{r(t-s)}\left[a\eta+a\eta_{2}\hat{q}(s)+\xi V(s)\hat{\pi}(s)\right]\mathrm{d}s
   +\int^{t}_{0}e^{r(t-s)}\left[b\hat{q}(s)\mathrm{d}W_{0}(s)+\hat{\pi}(s)\sqrt{V(s)}\mathrm{d}W_{1}(s)\right]\right)}\nonumber\\
   \leq&Ke^{-4\gamma_{i}\int^{t}_{0}\left[\xi V(s)\bar{\pi}(s)\mathrm{d}s
   +b\bar{q}(s)\mathrm{d}W_{0}(s)+\bar{\pi}(s)\sqrt{V(s)}\mathrm{d}W_{1}(s)\right]},
\end{align}
where
\begin{equation}\label{eq58}
  \bar{\pi}(s)=\frac{\xi+\rho\sigma \sum\limits_{i=1}^{n}g_{2}^{\gamma_{i}}(s) p_{i}}{\sum\limits_{i=1}^{n}\gamma_{i} p_{i}},\;\;
  \bar{q}(s)=\frac{a \eta_{2}}{b^{2}\sum\limits_{i=1}^{n}\gamma_{i} p_{i}}.
\end{equation}
The first inequality in the equation \eqref{eq55} is valid because $g_{2}^{\gamma_{i}}(t)\leq0$ and $g_{3}^{\gamma_{i}}(t)$ are deterministic and bounded, and the second inequality follows from the fact that $g_{1}^{\gamma_{i}}(t)$, $x_{0}e^{rt}$ and $\int^{t}_{0}e^{r(t-s)}\left[a\eta+a\eta_{2}\hat{q}(s)\right]\mathrm{d}s$ are deterministic and bounded. Now, we consider the integral $e^{-4\gamma_{i}\int^{t}_{0}b\bar{q}(s)\mathrm{d}W_{0}(s)}$. Since
$$
e^{-4\gamma_{i}\int^{t}_{0}b\bar{q}(s)\mathrm{d}W_{0}(s)}=\underbrace{e^{\int^{t}_{0}8\gamma_{i}^{2}b^{2}\bar{q}^{2}(s)\mathrm{d}s}}_{constant}\times
\underbrace{e^{-\int^{t}_{0}8\gamma_{i}^{2}b^{2}\bar{q}^{2}(s)\mathrm{d}s-4\gamma_{i}\int^{t}_{0}b\bar{q}(s)\mathrm{d}W_{0}(s)}}_{martingale},
$$
one can obtain that
\begin{equation}\label{eq60}
\mathbb{E}\left[e^{-4\gamma_{i}\int^{t}_{0}b\bar{q}(s)\mathrm{d}W_{0}(s)}\right]<\infty.
\end{equation}
Next, we attempt to find an estimate for $e^{-4\gamma_{i}\int^{t}_{0}\left(\xi V(s)\bar{\pi}(s)\mathrm{d}s
  +\bar{\pi}(s)\sqrt{V(s)}\mathrm{d}W_{1}(s)\right)}$. We find that
\begin{align*}
e^{-4\gamma_{i}\int^{t}_{0}\left(\xi V(s)\bar{\pi}(s)\mathrm{d}s
  +\bar{\pi}(s)\sqrt{V(s)}\mathrm{d}W_{1}(s)\right)}
=&\underbrace{e^{\int^{t}_{0}\left(-4\gamma_{i}\xi\bar{\pi}(s)+16\gamma_{i}^{2}\bar{\pi}^{2}(s)\right)V(s)\mathrm{d}s}}_{B}\\
&\times\underbrace{e^{-\int^{t}_{0}16\gamma_{i}^{2}\bar{\pi}^{2}(s)V(s)\mathrm{d}s
  -\int^{t}_{0}4\gamma_{i}\bar{\pi}(s)\sqrt{V(s)}\mathrm{d}W_{1}(s)}}_{C}.
\end{align*}
For the term $C$, since $\gamma_{i}\bar{\pi}(s)$ is  deterministic and bounded on $[0,T]$, it follows from Lemma 4.3 in \cite{ZengX2013} that
\begin{equation}\label{eq56}
  \mathbb{E}[C^{2}]=\mathbb{E}\left[e^{-\int^{t}_{0}32\gamma_{i}^{2}\bar{\pi}^{2}(s)V(s)\mathrm{d}s
  -\int^{t}_{0}8\gamma_{i}\bar{\pi}(s)\sqrt{V(s)}\mathrm{d}W_{1}(s)}\right]<\infty.
\end{equation}
For the term $B$, one has
\begin{equation}\label{eq57}
  \mathbb{E}[B^{2}]=\mathbb{E}\left[e^{\int^{t}_{0}\left(-8\gamma_{i}\xi\bar{\pi}(s)+32\gamma_{i}^{2}\bar{\pi}^{2}(s)\right)V(s)\mathrm{d}s}\right].
\end{equation}
By Theorem 5.1 in \cite{ZengX2013}, if the assumption
\begin{equation}\label{eq59}
  -8\gamma_{i}\xi\bar{\pi}(s)+32\gamma_{i}^{2}\bar{\pi}^{2}(s)\leq\frac{\kappa^{2}}{2\sigma^{2}}
\end{equation}
is satisfied, one can obtain that $\mathbb{E}[B^{2}]<\infty$. Hence, applying equations \eqref{eq55}, \eqref{eq60} and \eqref{eq56} and $\mathbb{E}[B^{2}]<\infty$, we can arrive at
\begin{align*}
\mathbb{E}\left[\left|Y^{\gamma_{i}}(t,x,v)\right|^{4}\right]&\leq K\mathbb{E}\left[e^{-4\gamma_{i}\int^{t}_{0}\left[\xi V(s)\bar{\pi}(s)\mathrm{d}s
   +b\bar{q}(s)\mathrm{d}W_{0}(s)+\bar{\pi}(s)\sqrt{V(s)}\mathrm{d}W_{1}(s)\right]}\right]\\
&=K\mathbb{E}\left[e^{-4\gamma_{i}\int^{t}_{0}b\bar{q}(s)\mathrm{d}W_{0}(s)}\right]\times\mathbb{E}\left[e^{-4\gamma_{i}\int^{t}_{0}\left(\xi V(s)\bar{\pi}(s)\mathrm{d}s
  +\bar{\pi}(s)\sqrt{V(s)}\mathrm{d}W_{1}(s)\right)}\right]\\
  &\leq K \mathbb{E}\left[BC\right] \leq K \left(\mathbb{E}\left[B^{2}\right] \mathbb{E}\left[C^{2}\right]\right)^{\frac{1}{2}}<\infty
\end{align*}
for any $(t,x,v)\in\mathcal{Q}$, where the second equation holds because $W_{0}$ is independent of $W_{1}$ and $W_{2}$. Thus,
\begin{equation}\label{eq61}
  \mathbb{E}\left[\sup_{t\in[0,T]}\left|Y^{\gamma_{i}}(t,x,v)\right|^{4}\right]<\infty
\end{equation}
and then
\begin{equation}\label{eq62}
  \mathbb{E}\left[\sup_{t\in[0,T]}\left|Y^{\gamma_{i}}(t,x,v)\right|^{2}\right]\leq \left(\mathbb{E}\left[\left(\sup_{t\in[0,T]}\left|Y^{\gamma_{i}}(t,x,v)\right|^{2}\right)^{2}\right]\right)^{\frac{1}{2}}
  \leq\left(\mathbb{E}\left[\sup_{t\in[0,T]}\left|Y^{\gamma_{i}}(t,x,v)\right|^{4}\right]\right)^{\frac{1}{2}}<\infty.
\end{equation}

Subsequently, we will show that $Y^{\gamma_{i}}$ satisfies the condition (A). By the form of $Y^{\gamma_{i}}$, we have
\begin{align*}
\mathbb{E}\left[\int^{T}_{0}\left|Y^{\gamma_{i}}_{t}(t,x,v)\right|\mathrm{d}t\right]&=\mathbb{E}\left[\int^{T}_{0}\left|-\frac{1}{\gamma_{i}}e^{g_{1}^{\gamma_{i}}(t)x
+g_{2}^{\gamma_{i}}(t)v+g_{3}^{\gamma_{i}}(t)}\left[\frac{\partial g_{1}^{\gamma_{i}}(t)}{\partial t}x+\frac{\partial g_{2}^{\gamma_{i}}(t)}{\partial t}v+\frac{\partial g_{3}^{\gamma_{i}}(t)}{\partial t}\right]\right|\mathrm{d}t\right]\\
&\leq K\mathbb{E}\left[\sup_{t\in[0,T]}\left|Y^{\gamma_{i}}(t,x,v)\right|\times \sup_{t\in[0,T]}\left|X^{\hat{u}}(t)+V(t)+1\right|\right],
\end{align*}
where the inequality follows from the fact that $\frac{\partial g_{1}^{\gamma_{i}}(t)}{\partial t}$, $\frac{\partial g_{2}^{\gamma_{i}}(t)}{\partial t}$ and $\frac{\partial g_{3}^{\gamma_{i}}(t)}{\partial t}$ are deterministic and bounded. Moreover, by equations \eqref{eq54} and \eqref{eq62}, we can derive that
\begin{align*}
\mathbb{E}\left[\sup_{t\in[0,T]}\left|Y^{\gamma_{i}}(t,x,v)\right|\times \sup_{t\in[0,T]}\left|X^{\hat{u}}(t)\right|\right]\leq&
\left(\mathbb{E}\left[\left(\sup_{t\in[0,T]}\left|Y^{\gamma_{i}}(t,x,v)\right|\right)^{2}\right]\times
\mathbb{E}\left[\left(\sup_{t\in[0,T]}\left|X^{\hat{u}}(t)\right|\right)^{2}\right]\right)^{\frac{1}{2}}\\
\leq&
\left(\mathbb{E}\left[\sup_{t\in[0,T]}\left|Y^{\gamma_{i}}(t,x,v)\right|^{2}\right]\times
\mathbb{E}\left[\sup_{t\in[0,T]}\left|X^{\hat{u}}(t)\right|^{2}\right]\right)^{\frac{1}{2}}\\
<&\infty.
\end{align*}
Similarly, by equations \eqref{eq51} and \eqref{eq62}, we can also obtain
$$\mathbb{E}\left[\sup_{t\in[0,T]}\left|Y^{\gamma_{i}}(t,x,v)\right|\times \sup_{t\in[0,T]}\left|V(t)\right|\right]
<\infty, \quad \mathbb{E}\left[\sup_{t\in[0,T]}\left|Y^{\gamma_{i}}(t,x,v)\right|\right]
<\infty.$$
Therefore, one has $\mathbb{E}\left[\int^{T}_{0}\left|Y^{\gamma_{i}}_{t}(t,x,v)\right|\mathrm{d}t\right]<\infty$, which means $Y^{\gamma_{i}}_{t}(t,x,v)\in\mathbb{L}^{1}_{\mathcal{F}}(0,T;\mathbb{R})$. In the same way, one can prove that $(rx+a\eta+a\eta_{2}\hat{q}+\xi v\hat{\pi})Y^{\gamma_{i}}_{x},\;\kappa(\theta-v)Y^{\gamma_{i}}_{v},\;(b^{2}\hat{q}^{2}+\hat{\pi}^{2}v)Y^{\gamma_{i}}_{xx},\;\sigma^{2}vY^{\gamma_{i}}_{vv},\;
\rho\hat{\pi}\sigma vY^{\gamma_{i}}_{xv}\in\mathbb{L}^{1}_{\mathcal{F}}(0,T;\mathbb{R})$. Furthermore, since $g_{1}^{\gamma_{i}}(t)$ and $\hat{\pi}$ are deterministic and bounded, we have
\begin{align*}
\mathbb{E}\left[\int^{T}_{0}\left|\hat{\pi}\sqrt{v}Y^{\gamma_{i}}_{x}(t,x,v)\right|^{2}\mathrm{d}t\right]\leq& K\mathbb{E}\left[\sup_{t\in[0,T]}\left|Y^{\gamma_{i}}(t,x,v)\right|^{2}\times \sup_{t\in[0,T]}\left|V(t)\right|\right]\\
\leq&K
\left(\mathbb{E}\left[\left(\sup_{t\in[0,T]}\left|Y^{\gamma_{i}}(t,x,v)\right|^{2}\right)^{2}\right]\times
\mathbb{E}\left[\left(\sup_{t\in[0,T]}\left|V(t)\right|\right)^{2}\right]\right)^{\frac{1}{2}}\\
\leq&K
\left(\mathbb{E}\left[\sup_{t\in[0,T]}\left|Y^{\gamma_{i}}(t,x,v)\right|^{4}\right]\times
\mathbb{E}\left[\sup_{t\in[0,T]}\left|V(t)\right|^{2}\right]\right)^{\frac{1}{2}}\\
<&\infty,
\end{align*}
where the last inequality follows from equations \eqref{eq51} and \eqref{eq61}. Hence, $\hat{\pi}\sqrt{v}Y^{\gamma_{i}}_{x}(t,x,v)\in\mathbb{L}^{2}_{\mathcal{F}}(0,T;\mathbb{R})$. In a similar manner, we can show that $b\hat{q}Y^{\gamma_{i}}_{x}(t,x,v),\;\sigma\sqrt{v}Y^{\gamma_{i}}_{v}(t,x,v)\in\mathbb{L}^{2}_{\mathcal{F}}(0,T;\mathbb{R})$.

Note that
\begin{align*}
U(t,x,v)=H(t,x,v)=&\int(\varphi^{\gamma})^{-1}\left(Y^{\gamma}(t,x,v)\right)\mathrm{d}\Gamma(\gamma)\\
=&\int -\frac{1}{\gamma}\left[g_{1}^{\gamma}(t)x+g_{2}^{\gamma}(t)v+g_{3}^{\gamma}(t)\right]\mathrm{d}\Gamma(\gamma)\\
=&-\sum\limits_{i=1}^{n}\frac{1}{\gamma_{i}}\left[g_{1}^{\gamma_{i}}(t)x+g_{2}^{\gamma_{i}}(t)v+g_{3}^{\gamma_{i}}(t)\right]p_{i}.
\end{align*}
Since $g_{1}^{\gamma_{i}}(t)$, $g_{2}^{\gamma_{i}}(t)$ and $g_{3}^{\gamma_{i}}(t)$ are deterministic and bounded on $[0,T]$, one has $U,H\in C^{1,2,2}(\mathcal{Q})$ and $\int\left|(\varphi^{\gamma})^{-1}\left(Y^{\gamma}(t,x,v)\right)\right|\mathrm{d}\Gamma(\gamma)<\infty$ by equations \eqref{eq51} and \eqref{eq63}, which means that the condition (iii) in Definition \ref{definition3.1} is satisfied. In addition, because $\frac{\partial g_{1}^{\gamma_{i}}(t)}{\partial t}$, $\frac{\partial g_{2}^{\gamma_{i}}(t)}{\partial t}$ and $\frac{\partial g_{3}^{\gamma_{i}}(t)}{\partial t}$ are deterministic and bounded, we can derive that
\begin{align*}
\mathbb{E}\left[\int^{T}_{0}\left|U_{t}(t,x,v)\right|\mathrm{d}t\right]&=\mathbb{E}\left[\int^{T}_{0}\left|-\sum\limits_{i=1}^{n}\frac{1}{\gamma_{i}}\left[\frac{\partial g_{1}^{\gamma_{i}}(t)}{\partial t}x+\frac{\partial g_{2}^{\gamma_{i}}(t)}{\partial t}v+\frac{\partial g_{3}^{\gamma_{i}}(t)}{\partial t}\right]\right|\mathrm{d}t\right]\\
&\leq K\mathbb{E}\left[\sup_{t\in[0,T]}\left|X^{\hat{u}}(t)\right|+\sup_{t\in[0,T]}\left|V(t)\right|\right]+K\\
&<\infty,
\end{align*}
where the last inequality follows from equations \eqref{eq51} and \eqref{eq63}. Likewise, we can show that $(rx+a\eta+a\eta_{2}\hat{q}+\xi v\hat{\pi})U_{x},\;\kappa(\theta-v)U_{v},\;(b^{2}\hat{q}^{2}+\hat{\pi}^{2}v)U_{xx},\;\sigma^{2}vU_{vv},\;
\rho\hat{\pi}\sigma vU_{xv}\in\mathbb{L}^{1}_{\mathcal{F}}(0,T;\mathbb{R})$. By the boundedness of functions $g_{1}^{\gamma_{i}}(t)$ for all $\gamma_{i}$ and the equation \eqref{eq51}, one can obtain that
\begin{align*}
\mathbb{E}\left[\int^{T}_{0}\left|\hat{\pi}\sqrt{v}U_{x}(t,x,v)\right|^{2}\mathrm{d}t\right]\leq& K\mathbb{E}\left[\sup_{t\in[0,T]}\left|U_{x}(t,x,v)\right|^{2}\times \sup_{t\in[0,T]}\left|V(t)\right|\right]\\
\leq&K\mathbb{E}\left[\sup_{t\in[0,T]}\left|\sum\limits_{i=1}^{n}\frac{1}{\gamma_{i}}g_{1}^{\gamma_{i}}(t)p_{i}\right|^{2}\times \sup_{t\in[0,T]}\left|V(t)\right|\right]\\
\leq&K\mathbb{E}\left[ \sup_{t\in[0,T]}\left|V(t)\right|\right]\\
<&\infty,
\end{align*}
which implies that $\hat{\pi}\sqrt{v}U_{x}(t,x,v)\in\mathbb{L}^{2}_{\mathcal{F}}(0,T;\mathbb{R})$. Similarly, one can verify that $b\hat{q}U_{x}(t,x,v),\;\sigma\sqrt{v}U_{v}(t,x,v)\in\mathbb{L}^{2}_{\mathcal{F}}(0,T;\mathbb{R})$. Thus, $U$ satisfies the condition (A). Since $U=H$, $H$ also meets the condition (A).
\end{proof}

\end{document}